\newtheorem{theorem}{Theorem}[section]
\newtheorem{corollary}[theorem]{Corollary}
\newtheorem{lemma}[theorem]{Lemma}
\newtheorem{proposition}[theorem]{Proposition}
\newtheorem{definition}[theorem]{Definition}
\newtheorem{example}[theorem]{Example}
\begin{document}

	\title[]{Tropical Nevanlinna theory of several variables}
	
	\author[T. B. Cao \and J. H. Peng]{ Tingbin Cao\and Jiahu Peng}
	
	\address[Tingbin Cao and Jiahu Peng]{Department of Mathematics, Nanchang University, Nanchang city, Jiangxi 330031, P. R. China}
	\email{tbcao@ncu.edu.cn, pjh@email.ncu.edu.cn}
	
	\thanks{The first author is supported by the National Natural Science Foundation of China (\#12571082), the Jiangxi Natural Science Foundation (\#20232ACB201005) and the Shandong Natural Science Foundation (\#ZR2024MA024)}
	\date{}
	
	\subjclass[2022]{Primary 14T10; Secondary 32H30}
	
	\keywords{Nevanlinna theory; tropical hypersurfaces; tropical meromorphic function;  max-plus semiring}
	
	\begin{abstract} The main goal of this paper is to establish the higher-dimensional Nevanlinna theory in tropical geometry. We first develop a theory of tropical meromorphic functions ( holomorphic maps) in several variables, such as the proximity function, counting function and characteristic function, the first main theorem, higher-dimensional tropical versions of the logarithmic derivative lemmas. Based on this, for algebraically nondegenerate tropical holomorphic maps $f$ with subnormal growth from $\mathbb{R}^n$ into tropical projective space $\mathbb{TP}^{m}$ intersecting tropical hypersurfaces $\{V_{P_j}\}_{j=1}^{q}$ with  degree $d_{j},$ we then obtain the Second Main Theorem $$\|\,\,\, (q-M-1-\lambda)T_f(r) \leq \sum_{j=M+2}^q \tfrac{1}{d_j}N(r,1_{\mathbb{T}} \oslash P_j \circ f) + o(T_f(r)),$$ where  $d=lcd(d_{1}, \ldots, d_{q})$ and $M=(_d^{m+d})-1.$ 	\end{abstract}
	\maketitle
	\tableofcontents

	\section{Introduction and main results}\label{Sec-1}
	
	Nevanlinna theory \cite{hayman, nevanlinna}, developed by Rolf Nevanlinna in the 1920s, is a fundamental framework in complex analysis that examines the value distribution of meromorphic functions. Nevanlinna theory provides a quantitative description of how often a meromorphic function takes specific values, extending the Picard theorem on omitted values. The theory focuses on two main components: the \emph{characteristic function} $T(r,f)$, which measures the growth of $f$, and the \emph{counting function} $N(r,a,f)$, which counts the $a$-points of $f$. The First Main Theorem establishes a fundamental relation between these quantities, while the deeper Second Main Theorem provides inequalities governing the distribution of values. Its higher-dimensional generalizations investigate holomorphic maps $f \colon \mathbb{C}^{n} \to X$ into complex manifolds, with profound applications in complex geometry and Diophantine approximation (See, for example, \cite{griffiths, ru, stoll2}).\par
	
	Tropical geometry \cite{itenberg-mikhalki-shustin, maclagan, mikhalkin}, also referred to as max-plus algebra, has emerged as a powerful framework connecting algebraic geometry with combinatorial mathematics. The tropical semiring $\mathbb{T} := \mathbb{R}\cup\{-\infty\}$ is equipped with tropical operations:
	\begin{align*}
		x \oplus y := \max(x,y) \,\,\, \mbox{and}\,\,\, x \otimes y := x + y,
	\end{align*}
	We also use the notations $x\oslash y:=\frac{x}{y}\oslash=x-y$ and $x^{\otimes a}:=ax$ for $a\in \mathbb{R}.$ The set $\mathbb{T}$ endowed with tropical arithmetic operators is called the set of tropical numbers. These operations lead to a completely different algebraic structure compared to classical complex analysis, yet remarkably parallel results can be established.
	In 2009 Halburd and Southall \cite{tropnevan} first established the tropical Nevanlinna theory for continuous piecewise integer linear real functions in the one-dimensional tropical affine space $\mathbb{R}$ (called tropical meromorphic functions). They first proposed the tropical versions of the Poisson-Jensen theorem, the first main theorem, the logarithmic derivative lemma, and the Clunie theorem for tropical meromorphic functions with finite order. In 2011, Laine and Tohge \cite{laine-tohge} considered extended the definition of tropical meromrophic functions with arbitrary real slopes and established the tropical Nevanlinna's second main theorem for tropical meromorphic functions with hyperorder strictly less than one. In this case the multiplicities of poles (respectively, roots) may be arbitrary real numbers instead of being integers (respectively, rational numbers) which is in certain respects fundamentally different from the counterparts in the classical meromorphic functions of one complex variable. In 2016, Korhonen and Tohge \cite{korhonen-tohge-2016} extended the tropical Nevanlinna theory to a higher-dimensional tropical projective space $\mathbb{T}\mathbb{P}^{m}$  for tropical holomorphic curves and obtained a tropical analogue of Cartan's second main theorem. Recently, the present first author and Zheng \cite{hypersurfaces} extended the tropical Nevanlinna theory to the case of tropical hypersurfaces in tropical projective spaces in which the growth of order is improved to the subnormal growth (or called minimal hypertype),$  i. e. , \limsup_{r\rightarrow\infty}\frac{\log T_{f}(r)}{r}=0,$ due to an improvement of the tropical logarithmic derivative lemma. This is an emerging field that combines classical value distribution theory with tropical geometry. However, up to now, there are no any results on tropical Nevanlinna theory in several variables. The key point is how to describe the properties of Nevanlinna's characterstic functions (including proximity function and counting function) for tropical meromorphic functions in several variables.  \par
	
	The main purpose of this paper is to develop a comprehensive theory of tropical meromorphic functions in $\mathbb{R}^n$ and then establish some fundamental results in the tropical Nevanlinna theory for higher dimensions,  by combining the classical value distribution theory with tropical geometry.\par

    \subsection{Tropical meromorphic functions in several variables and Nevanlinna theory}	In Section \ref{Sec-2}, we will introduce tropical meromorphic functions $f:\mathbb{R}^n \rightarrow \mathbb{T}$ defined through tropical rational operations:
	\[ f(x) = \left(\bigoplus_{i=0}^{+\infty} a_i \otimes x^{\otimes \mathbf{m}_i}\right) \oslash \left(\bigoplus_{j=0}^{+\infty} b_j \otimes x^{\otimes \mathbf{n}_j}\right), \]
	where $x = (x_1,\ldots,x_n) \in \mathbb{R}^n$ and $\mathbf{m}_i, \mathbf{n}_j \in \mathbb{R}^n$ are exponent vectors. One can see that these functions are piecewise linear with polyhedral complexes as their natural domains. There are some properties of tropical meromorphic functions as follows:\par
	\begin{itemize}
		\item Global continuity and piecewise linearity in all variables
		\item Polyhedral complex structure (finite or infinite unions of concave/convex polyhedral cells)
		\item Real-valued gradients corresponding to exponent vectors $\mathbf{m}_i$ and $\mathbf{n}_j$
	\end{itemize}
	We will classify all points of a tropical meromorphic function as (Definition \ref{jf}):\par
	\begin{itemize}
		\item Tropical smooth points: $J_f(x;\varphi) = 0$ for all directions $\varphi$
		\item Tropical poles: $J_f(x;\varphi_0) < 0$ for some $\varphi_0$
		\item Tropical roots: all other non-smooth points
	\end{itemize}
	where $J_f(x;\varphi)$ captures the so-called $\varphi$-positive directional derivative operator (Definition \ref{positive dd}).\par
	
We will develop the basic definitions and notations of the tropical Nevanlinna theory in $\mathbb{R}^n$ in Section \ref{Sec-3}, which closely generalizes the one-dimensional case studied by Halburd-Southall \cite{tropnevan} and Laine and Tohge \cite{laine-tohge} (see also \cite{book}). The fundamental components are:\par
	
	\begin{itemize}
		\item the \emph{proximity function} (Definition \ref{proxi}):
		\[ m(r,f) := \frac{1}{\omega_n}\int_{S^{n-1}(1)} f^+(r\theta)d\sigma(\theta); \]
		\item the \emph{counting function} for poles (Definition \ref{ctf}):
		\[ N(r,f) := \frac{1}{2}\int_0^r n(t,f)dt, \] where
		\begin{align*}
			n(t,f)&=\frac{1}{\omega_n}\int_{S^{n-1}(1)} n(t,f_{\theta})\,d\sigma(\theta);
		\end{align*}		\item the \emph{characteristic function}:
		\[ T(r,f) := m(r,f) + N(r,f). \]
	\end{itemize}
	There are identical concepts of the proximity function and the counting function, see Lemma \ref{L-3.7} and Lemma \ref{L-3.9}, respectively, by the Dirac function. The higher-dimensional tropical Jensen formula (Lemma \ref{fmtn})
	\[ T(r,f) - T(r,1_{\mathbb{T}} \oslash f) = f(0), \]
	leads to the First Main Theorem for tropical meromorphic functions (Theorem \ref{T3.15})
		\[
		T(r, \mathbf{1}_{\mathbb{T}} \oslash (f \oplus a)) = T(r, f) + O(1),
		\] where $a\in\mathbb{T}$ satisfying $a < L_f := \inf \{f(b): b \text{ is a pole of } f\}.$ \par
        
	In Section \ref{Sec-4}, we make use of the Dirac function to obtain a useful lemma (Lemma \ref{stolllemma}) that is a real analogous result of a Biancofiore-Stoll's lemma \cite[Lemma 3.1]{stoll1} (see also \cite[A5.1.1]{ru}), and then establish tropical versions of the logarithmic derivative lemma:\par
	
	\begin{itemize}
		\item For difference operators (Theorem \ref{loga}):
		\[ m(r,f(x+c)\oslash f(x)) = o(T(r,f)) \]
		holds for a tropical meromorphic function with submormal growth, where \( r \) runs to infinity outside of a set \( E \) of zero upper density measure.
		\item For $q$-difference operators (Theorem \ref{qloga}):
		\[ m(r,f(qx)\oslash f(x)) = o(T(r,f)) \] 	holds for a tropical meromorphic function with zero order,  and for all $r$ on a set of logarithmic density $1.$
	\end{itemize}
	
\subsection{Tropical holomorphic maps and Nevanlinna theory}
Define the tropical projective space as
	$\mathbb{TP}^{m}=\mathbb{R}^{m+1}/\mathbb{R}(1, 1, \ldots, 1).$ In Section \ref{Sec-5}, we introduce the tropical holomorphic maps $f:=[f_{0}: f_{1}:\cdots: f_{m}]:\mathbb{R}^{n}\rightarrow \mathbb{TP}^{m},$ where $f_{0}, f_{1}, \ldots, f_{m}$ are tropical entire functions in $\mathbb{R}^{n}$ and do not have any roots which are common to all of them. Denote $\mathbf{f}=(f_{0}, f_{1}, \ldots, f_{m}):\mathbb{R}^n\rightarrow \mathbb{T}^{m+1}.$ Then the map $\mathbf{f}$ is called a reduced representation of the tropical holomorphic map $f$ in $\mathbb{TP}^{m}.$ The tropical Cartan's characteristic function of \( f \) is defined by
		\[
		T_{\mathbf{f}}(r) := \frac{1}{\omega_n} \int_{S^{n-1}(1)} \|f(r\theta)\|d\sigma(\theta)  - \|f(0)\|,
		\]
		where $\|f(r\theta)\| = \max \bigl\{ f_0(r\theta), \ldots, f_m(r\theta) \bigr\},$ $r\theta=x\in \mathbb{R}^n,$ $\theta\in S^{n-1}(1).$ The first main theorem for a tropical holomorphic map $f$ intersecting tropical hypersurface $V_p$ (Theorem \ref{T4}) states that 
		if $f(\mathbb{R}^n)\not\subset V_{P},$ then we have
		\begin{eqnarray*}m_{f}(r, V_{P})+N(r, 1_{\mathbb{T}}\oslash P(f))=d T_{f}(r)+O(1).\end{eqnarray*}\par

The above tropical logarithmic derivative lemmas enable us to obtain the Second Main Theorems for tropical hypersurfaces (Theorem \ref{SMT} and Theorem \ref{qsmt}), which for algebraically nondegenerate maps $f:\mathbb{R}^n \rightarrow \mathbb{TP}^m$ with subnormal growth intersecting tropical hypersurfaces $\{V_{P_j}\}_{j=1}^{q}$ with degree $d_j$ states:
	$$\|\,\,\, (q-M-1-\lambda)T_f(r) \leq \sum_{j=M+2}^q \tfrac{1}{d_j}N(r,1_{\mathbb{T}} \oslash P_j \circ f) + o(T_f(r)),$$
	where  $d=lcd(d_{1}, \ldots, d_{q}),$  $M = \binom{m+d}{d}-1$ and $\lambda:=ddg (\{P_{M+2}\circ f, \ldots, P_{q}\circ f\})$ measures degeneracy (see in Subsection \ref{SS2.3}).\par
	
Finally, in Section \ref{Sec-6} we present a growth-free Second Main Theorem (Theorem \ref{C1.4}) for a tropical holomorphic map intersecting a complete tropical polynomials (i.e., all coefficients are finite real values):
	\[ T_f(r) = \frac{1}{d}N\left(r,\frac{1_{\mathbb{T}}}{P\circ f}\oslash\right) + O(1), \]
	which generalizes  Halonen-Korhonen-Filipuk's results in \cite{8}, and in particular whenever $\mathbb{TP}^1$ this recovers the relationship:
	\[ T(r,f) = N\left(r,\frac{1_{\mathbb{T}}}{f\oplus a}\oslash\right) - N(r,f\oplus a) + N(r,f) + O(1) \]
	for any $a\in\mathbb{R}.$ \par

In a word, our work in this paper strongly extends previous results by Laine and Tohge \cite{laine-tohge} , Korhonen-Tohge  \cite{korhonen-tohge-2016} and Cao-Zheng \cite{hypersurfaces} to the case of several variables, while many new techniques are introduced.
	
\section{Tropical meromorphic functions in $\mathbb{R}^{n}$}\label{Sec-2}
	
	\subsection{Tropical algebra}
	Tropical operations (max-plus) are defined for two real variables $x, y\in\mathbb{T}:=\mathbb{R}\cup\{-\infty\},$
	\[
	\begin{aligned}
		x \oplus y &:= \max(x, y), \\
		x \otimes y &:= x + y, \\
		x \oslash y &:= x - y.
	\end{aligned}
	\] Obviously, the tropical additional unit $0_{\mathbb{T}}:=-\infty$ and the tropical  multiplication unit $1_{\mathbb{T}}:=0.$ \par

	\subsection{Tropical meromorphic functions in $\mathbb{R}^{n}$}
	Here, we introduce the concept of tropical meromorphic functions in higher dimensions as follows.
	
	\begin{definition}[Tropical meromorphic function] A tropical meromorphic function $f: \mathbb{R}^n \to \mathbb{T}$ is defined in the tropical algebra (max-plus) as follows:
		\begin{eqnarray}\label{mer}
			f(x) = \left( \bigoplus_{i=0}^{+\infty} a_i \otimes x^{\otimes \mathbf{m}_i} \right) \oslash \left( \bigoplus_{j=0}^{+\infty} b_j \otimes x^{\otimes \mathbf{n}_j} \right),
		\end{eqnarray}
		where:
		\begin{itemize}
			\item \(x = (x_1, x_2, \ldots, x_n) \in \mathbb{R}^n\);
			\item The exponential operation is given by:
			\[
			x^{\otimes \mathbf{m}_i} := m_{i1} x_1 + m_{i2} x_2 + \cdots + m_{in} x_n,
			\]
			where \(\mathbf{m}_i = (m_{i1}, m_{i2}, \ldots, m_{in}) \in \mathbb{R}^n\);
			\item All coefficients $a_i, b_j \in \mathbb{T}.$
	\end{itemize}\end{definition}
	
	By translating the max-plus operations into standard arithmetic operations, \eqref{mer} can be expressed as:
	\begin{eqnarray*}
		f(x) = \max_{0 \leq i \leq \infty} \left\{ a_i + m_{i1}x_1 + \cdots + m_{in}x_n \right\} -  \max_{0 \leq j \leq \infty} \left\{ b_j + n_{i1}x_1 + \cdots + n_{in}x_n \right\}.
	\end{eqnarray*}

	One can see that a tropical meromorphic function $f$ satisfies the following properties:
	\begin{itemize}
		\item $f$ is a globally continuous piecewise linear function in several variables $x$.
		In particular, for each $i \in \{1,\dots,n\}$, the univariate slice
		$x_i \mapsto f(x_1,\dots,x_i,\dots,x_n)$ is also a continuous piecewise linear function.
		
		\item The non-smooth locus $\Sigma_f$ of $f$ is an $(n-1)$-dimensional polyhedral complex,
		consisting of convex polyhedral cells glued together along common faces.
		
		\item Each linear region (cell) of $f$ has a constant real-valued gradient vector, corresponding to the exponent vector
		$\mathbf{m}_i$ or $\mathbf{n}_j$ of the dominating tropical monomial in that region.
	\end{itemize}
	
	\begin{example}[Tropical polynomials] A tropical polynomial $P: \mathbb{R}^n \to \mathbb{T}$ is defined as:
		\begin{eqnarray*}\label{tro mer}
			P(x) = \bigoplus_{i=0}^{k} a_i \otimes x^{\otimes \mathbf{m}_i}.
		\end{eqnarray*} The roots of $P(x)$ consist by all points where the maximum is achieved by at least two terms, i. e.,  $$\Sigma_P := \{ x \in \mathbb{R}^n :  \max_{i=0}^{k}\{a_i + m_i x\} \,\, \mbox{is attained by at least two terms}\}.$$
	\end{example}
	
	\begin{example}[Tropical rational functions] A tropical rational function $f: \mathbb{R}^n \to \mathbb{T}$ is defined in the tropical algebra (max-plus) as:
		\begin{eqnarray}\label{tro rat}
			f(x) = \left( \bigoplus_{i=0}^{p} a_i \otimes x^{\otimes \mathbf{m}_i} \right) \oslash \left( \bigoplus_{j=0}^{q} b_j \otimes x^{\otimes \mathbf{n}_j} \right),
		\end{eqnarray}
	\end{example}
	
	\begin{example}[Tropical entire functions] A tropical entire function $f: \mathbb{R}^n \to \mathbb{T}$ is defined in the tropical algebra (max-plus) as:
		\begin{eqnarray*}\label{tro mer}
			f(x) = \bigoplus_{i=0}^{+\infty} a_i \otimes x^{\otimes \mathbf{m}_i}
		\end{eqnarray*}which is a polyhedral complex: a finite or infinite union of convex polyhedral cells glued together along faces, i.e., has no poles.
	\end{example}

	\subsection{Tropical roots and poles for meromorphic functions}

	For all $ x \in \mathbb{R}^n $, we express it as $ x = t\theta $, where:
	\begin{itemize}
		\item if $ x \neq 0,$ then $ t = \|x\| >0 $ and $ \theta = \frac{x}{\|x\|} \in S^{n-1} (1):=\{y\in\mathbb{R}^{n}: \|y\|=1\} ;$
		\item if $ x = 0 ,$ then $ t = 0 $ and there are infinitely many directions.
	\end{itemize}
	For each fixed $ \theta \in S^{n-1}(1) $, define $ f_\theta(t) := f(t\theta) $ for $ t \in [0, \infty) $. Note that $ f_\theta(0) = f(0) $ is independent of $ \theta $.
	Then for each fixed $\theta$, the function $$f_{\theta}(t):=f(t\theta) $$ can be regarded as a one-dimensional tropical meromorphic function for the variable $t$. Throughout this paper, we will adopt this idea to extend the tropical Nevalinna theory from one variable to several variables.\par
	
	It is known  that for the \(n\)-dimensional Euclidean space $\mathbb{R}^{n},$ the surface area $\omega_n$ of the unit sphere $S^{n-1}(1)$ is given by \cite{rudin}
	\[
	\omega_n := \frac{2 \pi^{\frac{n}{2}}}{\Gamma\left(\frac{n}{2}\right)},
	\] where \(\Gamma\) is the real Gamma  function, which satisfies \(\Gamma(n) = (n-1)!\). It is known that  $$\int_{S^{n-1}(1)}  d\sigma(\varphi)=\omega_n,$$
		where  \( d\sigma(\varphi) \) is the standard surface measure.	\par
	
	\begin{definition}\label{positive dd}
		Let  \( f: \mathbb{R}^n \to \mathbb{T} \) be a tropical meromorphic function.  For any point $x\in\mathbb{R}^{n}$, and a direction $\varphi\in S^{n-1}(1),$  we define the $\varphi$-positive directional derivative operator by	\begin{align*}
			\partial_{\varphi}^{+}f(x) &:= \lim_{\substack{h \to 0^+ \\ h > 0}} \frac{f(x + h\varphi) - f(x)}{h}.
		\end{align*}
		Set
		\begin{equation*}
			J_f(x; \varphi) := \partial_{\varphi}^{+}f(x) + \partial_{-\varphi}^{+}f(x)
		\end{equation*}
		and
		\begin{equation*}
			\overline{J_f}(x) := \frac{1}{\omega_n} \int_{S^{n-1}(1)} J_f(x; \varphi)  d\sigma(\varphi),
		\end{equation*}
		where  \( d\sigma(\varphi) \) is the standard surface measure.	
	\end{definition}	
	
	According to the definition, direct verification shows the following result:
	\begin{equation} \label{J=}
		J_f(x; \varphi)=J_f(x; -\varphi).
	\end{equation}
	
	\begin{example}
		For a tropical meromorphic function in  one dimension (i.e., $\mathbb{R}^1$), we can deduce that  $S^0(1)= \{-1, 1\},$
		$\omega_1 = 2.$ Then for \(\varphi\in \{-1, 1\}\), without loss of generality, we have
		\begin{align*}
			\partial_{1}^{+}f(x) &= \lim_{\substack{h \to 0^+ \\ h > 0}} \frac{f(x + h) - f(x)}{h} = f'_+(x), \\
			\partial_{-1}^{+}f(x) &= \lim_{\substack{h \to 0^+ \\ h > 0}} \frac{f(x - h) - f(x)}{h} = -f'_-(x), \\
			J_f(x; 1) &=J_f(x; -1)= f'_+(x) - f'_-(x),
		\end{align*}
		and \begin{align*}
			\overline{J_f}(x)&=\frac{1}{\omega_1}\int_{S^{0}(1)} J_f(x;\varphi) d\sigma(\varphi) \\&= \frac{1}{2} \sum_{\varphi \in \{-1,1\}} J_f(x, \varphi)\\
			&= J_f(x; 1) = J_f(x; -1).
		\end{align*}
	\end{example}

	For a tropical meromorphic function $f: \mathbb{R}^n \to \mathbb{T},$ all points where the function is not differentiable are called tropical roots or poles. The  details will be shown as follows.\par
	
	\begin{definition}\label{jf}
		Let $f: \mathbb{R}^n \to \mathbb{T}$ be a tropical meromorphic function. We classify $x\in\mathbb{R}^n $ as follows:
		
		(i). (tropical) smooth point if
		\[
		J_f(x; \varphi) = 0 \quad \text{for every direction } \varphi;
		\]
		
		(ii). tropical pole if there exists at least one $\varphi_0,$ such that
		\[
		J_f(x; \varphi_0) < 0,
		\] and its multiplicity is defined by  \begin{align*}
			\nu_f(x):=\frac{1}{\omega_n}\int_{\{\varphi_0 \in S^{n-1}(1): J_f(x; \varphi_0)<0\}} |J_f(x; \varphi_0)| d\sigma(\varphi_0);
		\end{align*}
		
		(iii). tropical root if	else, and its the multiplicity \begin{align*}
			\nu_{1_{\mathbf{T}} \oslash f}(x):=\frac{1}{\omega_n}\int_{\{\varphi_0 \in S^{n-1}(1): J_f(x; \varphi_0)>0\}} |J_f(x; \varphi_0)| d\sigma(\varphi_0).
		\end{align*}
	\end{definition}
	
	\begin{figure}[htbp]
		\centering
		\includegraphics[width=8cm]{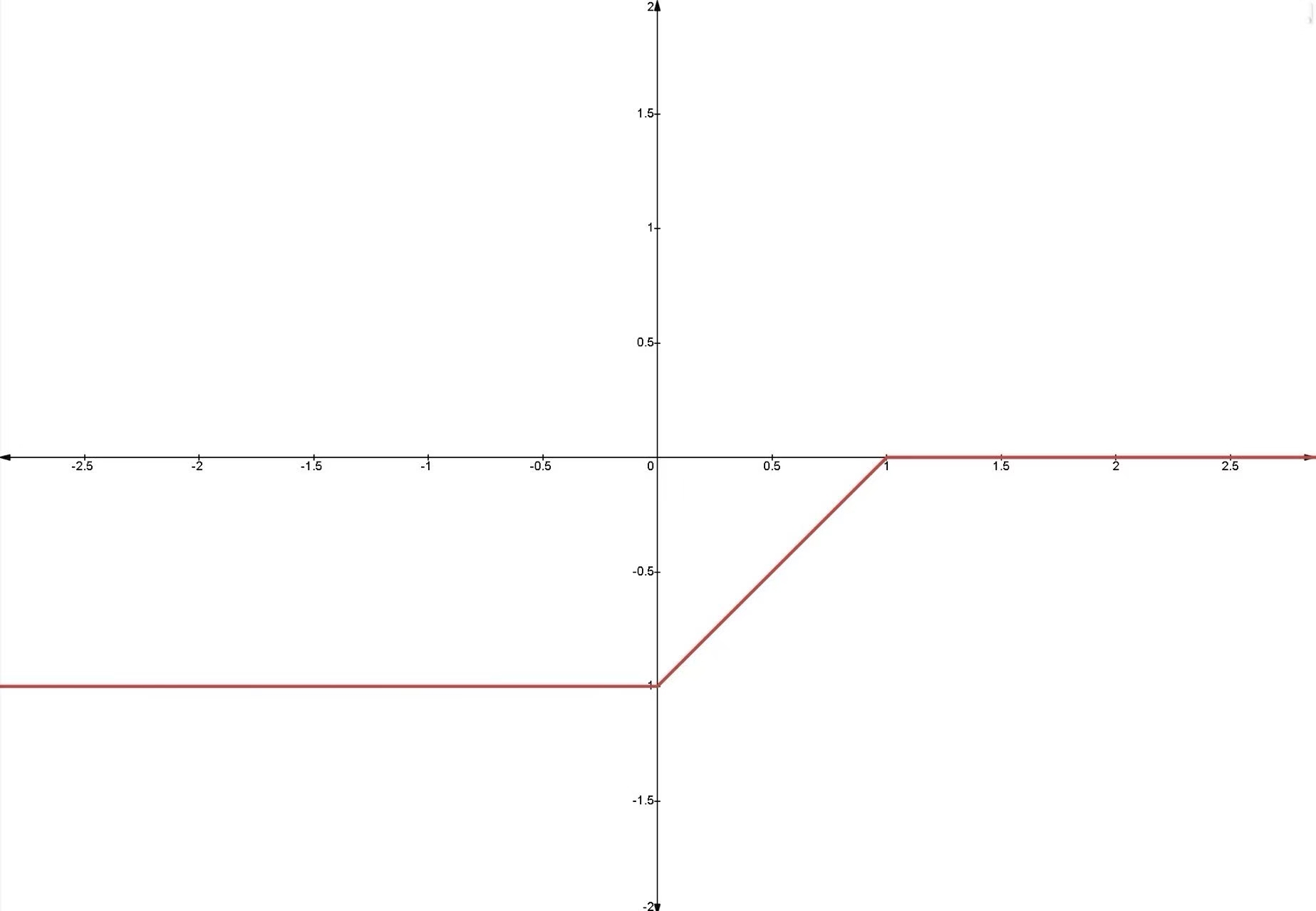}
		\caption{ $ 	f(x) = (x\oplus 0)\oslash (x\oplus 1)$}
		\label{F00}
	\end{figure}

	\begin{example}
		Consider the tropical meromorphic function of one variable
		\[
		f(x) = (x\oplus 0)\oslash (x\oplus 1)= \max(x, 0) - \max(x, 1).
		\]
		It is easy to see that (see Figure \ref{F00})
		\[
		f(x) =
		\begin{cases}
			-1 & \text{if } x \leq 0, \\
			x - 1 & \text{if } 0 < x \leq 1, \\
			0 & \text{if } x > 1.
		\end{cases}
		\]
		
		Then for $\varphi=1,$ one can deduce that
		$J_f(0, \varphi)=J_f(0, -\varphi)=f'_+(0)-f'_-(0) = 1>0$ and
		$J_f(1, \varphi)=J_f(1, -\varphi)=f'_+(1) - f'_-(1) = -1<0.$
		Hence, the points $x=0$ and $x=1$ are a tropical root and tropical pole of the function $f$ with multiplicity one, respectively.	
	\end{example}
	
	\begin{figure}[htbp]
		\centering
		\includegraphics[width=8cm]{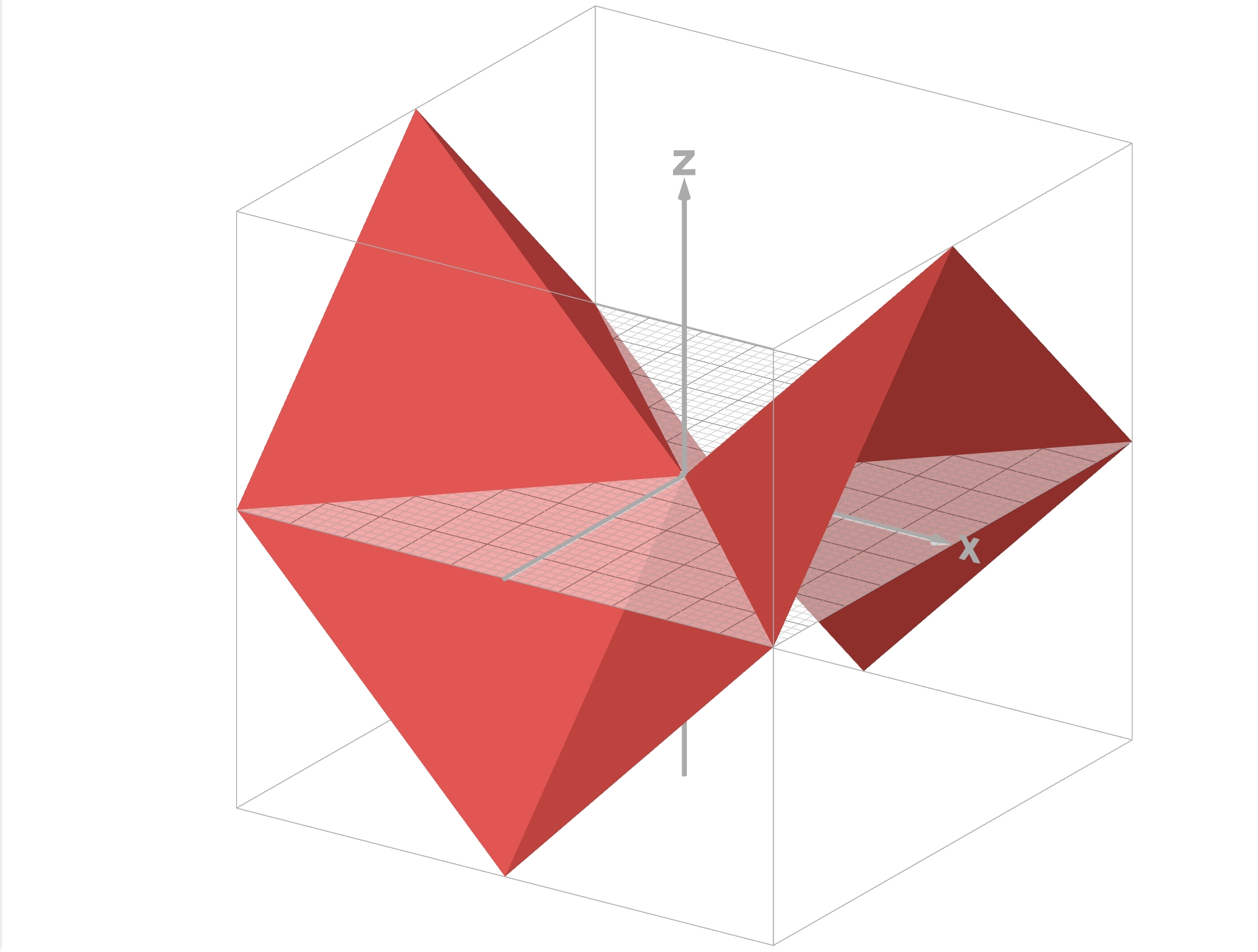}\\
		\caption{$z=|x| - |y|$}
		\label{F2}
	\end{figure}
	
	\begin{example}	\label{EX} For two-dimensional case, we define a tropical meromorphic function (see Figure \ref{F2}):
		\begin{eqnarray*}
			z=f(x,y) := ( x \oplus (1_{\mathbb{T}}\oslash x)) \oslash (y\oplus (1_{\mathbb{T}}\oslash y) ) =|x| - |y|.
		\end{eqnarray*}
		For any direction $\varphi= (\cos\phi, \sin\phi),$  it is easy to deduce that
		\[
		J_f((0,0); \varphi) = 2(|\cos\phi| - |\sin\phi|).
		\]
		If  $\varphi= (1, 0)$ on $x$-axis (see the left hand of Figure \ref{F3}), then
		$J_f((0,0);(1,0))= 2>0;$ and if $\varphi = (0,1)$ on $y$-axis (see the right hand of Figure \ref{F3}), then $J_f((0,0);(0,1)) = -2<0.$
		Direct computation yields \begin{align*}
			\nu_f(x):&=\frac{1}{2\pi}\int_{0}^{2\pi} |2(|\cos\phi| - |\sin\phi|)| d\phi \,\,\, (\mbox{for all}\,\, |\cos\phi|<|\sin\phi|)\\
			&=\frac{4\sqrt{2}-4}{\pi}.
		\end{align*} Hence, the  point $(0,0)$ is tropical pole of $f$ with multiplicity $\frac{4\sqrt{2}-4}{\pi}.$
	\end{example}

	\begin{figure}
		\centering
		\subfloat{\label{Figure1}}
		\includegraphics[width=0.45\linewidth]{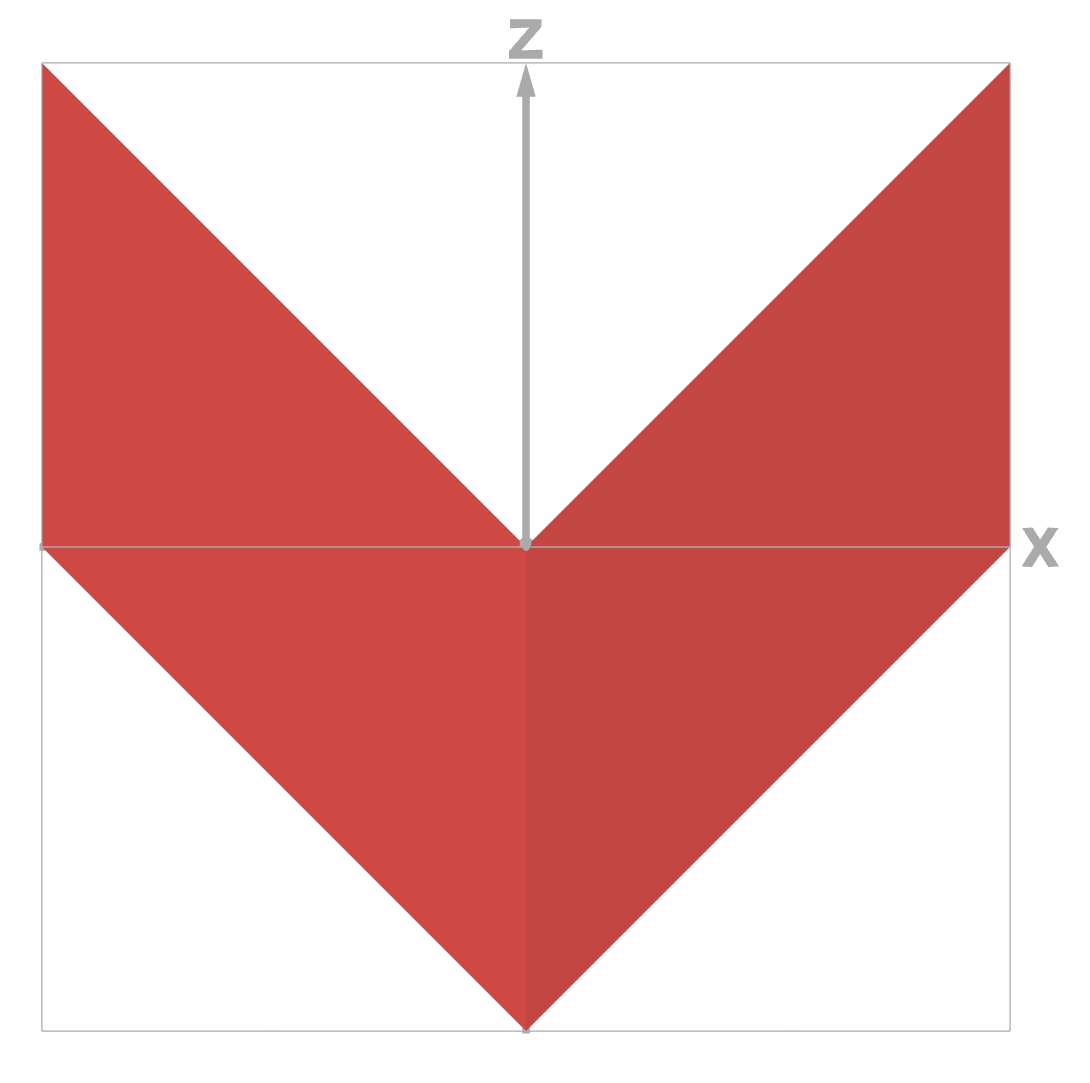}
		\quad
		\subfloat{\label{Figure2}}
		\includegraphics[width=0.45\linewidth]{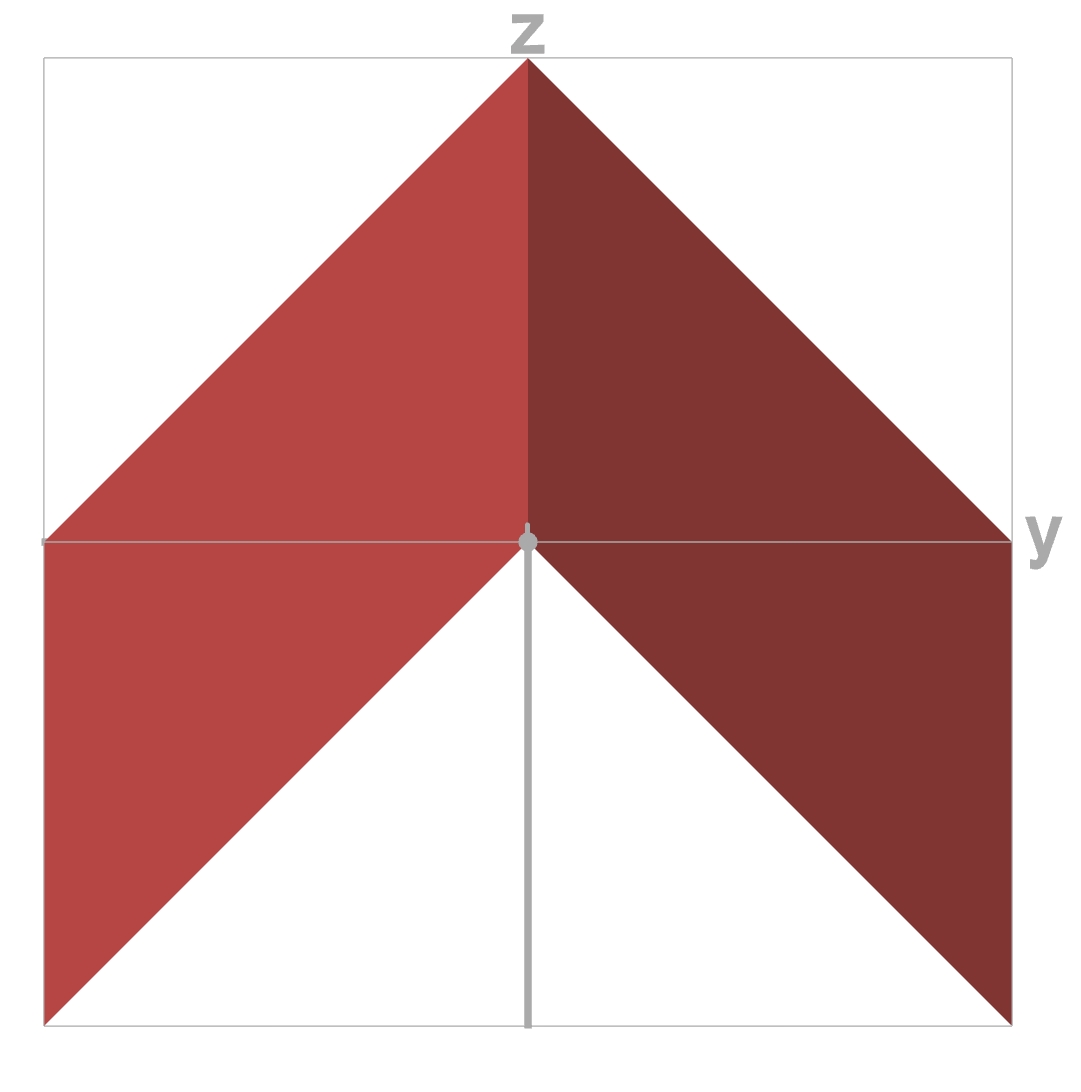}
		\caption{Example \ref{EX}}
		\label{F3}
	\end{figure}

	\section{Tropical first main theorem in higher dimensions}\label{Sec-3}
	\subsection{Basis on the Dirac functions}
	Firstly, we recall a basic result on the Dirac functions as follows.\par
	
	\begin{lemma}
		\cite[Ch.1, Sec.1]{dlk1}\label{L-1}
		The fundamental property of the Dirac delta distribution in one dimension:
		\begin{align}\label{E-1}
			\int_{\mathbb{R}} \delta(x - a) \phi(x)  dx = \phi(a);
	\end{align}	\end{lemma}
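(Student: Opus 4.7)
The identity in question is the classical sifting property of the one-dimensional Dirac distribution. Since $\delta$ is not a locally integrable function, the left-hand side of \eqref{E-1} must be interpreted in the distributional sense, and the plan is to verify the identity through a standard approximation procedure from the theory of distributions.

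First, I would fix a test function $\phi$ (continuous at $a$ and, say, smooth with compact support so as to lie in $\mathcal{D}(\mathbb{R})$) and introduce a nascent delta family, for instance the Gaussian
\begin{equation*}
\delta_\epsilon(y) := \tfrac{1}{\sqrt{2\pi\epsilon}}\,e^{-y^{2}/(2\epsilon)}, \qquad \epsilon > 0,
\end{equation*}
which satisfies $\int_{\mathbb{R}}\delta_\epsilon(y)\,dy = 1$ for every $\epsilon > 0$ and is well known to converge to $\delta$ in the distributional sense as $\epsilon \to 0^{+}$. Any other standard mollifier (a rescaled compactly supported bump, the Cauchy kernel, etc.) would work equally well.

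Second, via the substitution $y = x - a$ and the normalization of $\delta_\epsilon$, I would rewrite
\begin{equation*}
\int_{\mathbb{R}} \delta_\epsilon(x-a)\,\phi(x)\,dx - \phi(a) = \int_{\mathbb{R}} \delta_\epsilon(y)\bigl(\phi(a+y) - \phi(a)\bigr)\,dy,
\end{equation*}
and split the integration domain into $|y| < \eta$ and $|y| \geq \eta$ for arbitrary $\eta > 0$. On the inner region the integrand is controlled by the modulus of continuity of $\phi$ at $a$; on the outer tail it is dominated by $2\|\phi\|_{\infty} \int_{|y|\geq \eta}\delta_\epsilon(y)\,dy$, a quantity which tends to $0$ as $\epsilon \to 0^{+}$ for each fixed $\eta > 0$.

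Third, letting $\epsilon \to 0^{+}$ followed by $\eta \to 0^{+}$ yields the sifting formula \eqref{E-1}, which is by definition the distributional pairing $\langle \delta(\cdot - a), \phi\rangle = \phi(a)$. The main (and essentially only) subtlety is conceptual, namely justifying the distributional interpretation of the symbol $\int_{\mathbb{R}} \delta(x-a)\phi(x)\,dx$; once the approximating scheme $\delta_\epsilon \to \delta$ is in place, the remaining argument is an elementary $\varepsilon$--$\eta$ estimate that rests solely on the continuity of $\phi$ at $a$.
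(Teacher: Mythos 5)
Your proposal is correct as a piece of analysis, but note that the paper does not prove this lemma at all: it is stated as a recalled standard fact with a citation to Gel'fand--Shilov, so there is no argument in the paper to compare against. Your mollifier verification (approximating $\delta$ by a nascent family $\delta_\epsilon$, splitting the integral near and away from $a$, and using the modulus of continuity of $\phi$) is the standard way to give the symbol $\int_{\mathbb{R}}\delta(x-a)\phi(x)\,dx$ rigorous content, and the estimates you sketch all go through. The only caveat worth keeping in mind is the one you already flag yourself: in the strict distributional framework, $\langle\delta(\cdot-a),\phi\rangle=\phi(a)$ is the \emph{definition} of the shifted delta, so what your argument actually establishes is that the regularizations $\delta_\epsilon(\cdot-a)$ converge to this distribution --- which is exactly the right thing to verify, and is all the paper needs, since the subsequent Lemmas \ref{dlkn} and \ref{l3.2} use $\delta$ only through this sifting property.
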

	
	By Lemma \ref{L-1}, we get a result which will play a key role in describing the basics of the tropical Nevanlinna theory in higher dimensions. \par
	
	\begin{lemma}\label{dlkn}
		\begin{align}\label{E2}
			\int_{\mathbb{R}^n} \delta(\|x\| - r) F(x)  dx = r^{n-1} \int_{S^{n-1}(1)} F(r\theta)  d\sigma(\theta),
		\end{align}
		where $x=r\theta$ and $  d\sigma(\theta)$is surface element.
	\end{lemma}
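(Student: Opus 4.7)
The plan is to reduce the left-hand side to an iterated integral via spherical coordinates in $\mathbb{R}^n$, at which point the radial Dirac delta collapses the radial integral by the one-dimensional identity \eqref{E-1} recalled in Lemma \ref{L-1}.

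More concretely, I would first perform the change of variables $x = t\theta$ with $t = \|x\| \in [0,\infty)$ and $\theta = x/\|x\| \in S^{n-1}(1)$. The standard Jacobian calculation in $\mathbb{R}^n$ (see, e.g., Rudin, cited in the paper) gives the volume element $dx = t^{n-1}\, dt\, d\sigma(\theta)$. Applying this and Fubini's theorem, which is legitimate because the integrand is a distribution in the radial variable alone times an integrable function in the angular variable (or, more rigorously, because one regularizes $\delta$ by a sequence of bump functions $\delta_\varepsilon$ and passes to the limit), the left-hand side becomes
\begin{equation*}
\int_{0}^{\infty}\!\!\int_{S^{n-1}(1)} \delta(t-r)\, F(t\theta)\, t^{n-1}\, d\sigma(\theta)\, dt
= \int_{S^{n-1}(1)} \!\left[\int_{0}^{\infty}\delta(t-r)\, t^{n-1} F(t\theta)\, dt\right] d\sigma(\theta).
\end{equation*}

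Next I would apply the one-dimensional sifting property \eqref{E-1} to the inner integral with $\phi(t) = t^{n-1} F(t\theta)$ and $a = r > 0$ (so the delta is supported inside $[0,\infty)$ and the boundary at $t=0$ causes no issue); this evaluates the inner integral to $r^{n-1} F(r\theta)$ and yields exactly the right-hand side $r^{n-1}\int_{S^{n-1}(1)} F(r\theta)\, d\sigma(\theta)$.

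I do not anticipate a serious obstacle: the only technical point is justifying the interchange of the distributional pairing and the angular integral, which can be handled either by the standard mollifier argument or by interpreting both sides as tempered distributions in $r$ tested against a suitable class of $F$. Since later applications in Sections \ref{Sec-3}--\ref{Sec-4} use this identity with continuous piecewise linear $F$ arising from tropical meromorphic functions, the regularity of $F$ is ample for the mollification approach to go through.
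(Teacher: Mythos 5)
Your proposal is correct and follows essentially the same route as the paper's proof: pass to spherical coordinates with $dx = t^{n-1}\,dt\,d\sigma(\theta)$ and collapse the radial integral using the one-dimensional sifting property from Lemma \ref{L-1}. The extra remarks on justifying the interchange via mollification are a welcome refinement but do not change the argument.
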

	
	\begin{proof}We perform a change of variables to spherical coordinates, setting $x = \rho \theta$, where $\rho \in [0, \infty)$ and $\theta \in S^{n-1}(1)$. The volume element in $\mathbb{R}^n$ becomes $dx = \rho^{n-1} d\rho\, d\sigma(\theta)$. Substituting into the integral gives
		\[
		\int_{\mathbb{R}^n} \delta(\|x\| - r)\, F(x)\, dx
		= \int_0^\infty \int_{S^{n-1}(1)} \delta(\rho - r)\, F(\rho \theta)\, \rho^{n-1} \, d\sigma(\theta)\, d\rho.
		\] Using the standard property of the Dirac delta function \eqref{E-1},
		we evaluate the $\rho$-integral:
		\[
		\int_0^\infty \delta(\rho - r)\, F(\rho \theta)\, \rho^{n-1} \, d\rho = r^{n-1} F(r\theta).
		\] Substituting this into the expression yields
		\[
		\int_{\mathbb{R}^n} \delta(\|x\| - r)\, F(x)\, dx
		= r^{n-1} \int_{S^{n-1}(1)} F(r\theta)\, d\sigma(\theta),
		\]
		as required.
	\end{proof}
	
	\begin{lemma} \label{l3.2}	\cite[Ch.2]{dlkpro} If \(g\) is a smooth real function with simple zeros \(x_i,\) i.e.  \(g(x_i) = 0\) and \(g'(x_i) \neq 0\), then
		\[
		\delta(g(x)) = \sum_i \frac{\delta(x - x_i)}{|g'(x_i)|}.
		\]
	\end{lemma}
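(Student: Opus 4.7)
The plan is to verify the claimed identity as a distributional equality by pairing both sides against an arbitrary smooth compactly supported test function $\phi$ and checking the results agree. Because $g$ has only simple zeros, each $x_i$ is isolated, so I can choose pairwise disjoint open neighbourhoods $U_i$ of the $x_i$ small enough that $g'$ keeps a constant sign on each $U_i$ (the sign of $g'(x_i)$) and $g$ is a $C^1$-diffeomorphism from $U_i$ onto an open interval $V_i$ containing $0$. Outside $\bigcup_i U_i$ the function $g$ is bounded away from zero on $\mathrm{supp}\,\phi$, so the distribution $\delta(g(x))$ contributes nothing there.

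Next, I localize the integral and perform the change of variable $u=g(x)$ on each $U_i$. The Jacobian gives $dx = du/g'(g^{-1}(u))$; when $g'(x_i)<0$ the diffeomorphism reverses orientation, and the resulting flip of integration limits combines with the negative Jacobian to yield precisely $du/|g'(g^{-1}(u))|$. Consequently, in both cases,
\[
\int_{U_i}\delta(g(x))\,\phi(x)\,dx
=\int_{V_i}\delta(u)\,\frac{\phi(g^{-1}(u))}{|g'(g^{-1}(u))|}\,du
=\frac{\phi(x_i)}{|g'(x_i)|},
\]
where the last step is the one-dimensional sifting identity from Lemma \ref{L-1}. Summing over $i$ and recognizing the right side as the pairing of $\sum_i\delta(x-x_i)/|g'(x_i)|$ with $\phi$ completes the verification.

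The step that requires the most care is the sign bookkeeping in the change of variables: one must see that the potential minus sign from a decreasing $g$ is exactly cancelled by the reversal of the interval $g(U_i)$, producing the absolute value $|g'(x_i)|$ uniformly. A secondary, minor technical point is handling the possibility of infinitely many zeros, which is resolved by noting that $\mathrm{supp}\,\phi$ is compact and simple zeros are locally finite, so the sum is effectively finite on the support of any given test function. All other ingredients are standard.
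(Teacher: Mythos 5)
Your proof is correct. Note that the paper does not actually prove this lemma --- it is quoted from Lighthill's book as a known property of the Dirac distribution --- so there is no in-paper argument to compare against; your localization-plus-change-of-variables computation is the standard textbook derivation, and you handle the two delicate points (the sign cancellation producing $|g'(x_i)|$, and the local finiteness of the simple zeros on $\mathrm{supp}\,\phi$, which follows since an accumulation point of zeros would itself be a zero with vanishing derivative) correctly.
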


	\subsection{Tropical Nevanlinna theory in one variable}
	Now let's recall the Nevanlinna theory of tropical meromorphic functions in one dimension (see \cite{tropnevan}, \cite{cmft2012},  \cite{book}).\par
	
	For any real number $x\in\mathbb{T},$ we define $x^+ := \max\{x, 0\} .$ For a tropical meromorphic function $f$  we set $ f^{+}(x) := \max(f(x), 0).$ Define the proximity function of $f$ by taking the average of the positive parts of $f$  at the endpoints in $[-r, +r]$ as follows:	
	\[
	m(r, f) := \frac{f^+(r) + f^+(-r)}{2}.
	\]
	Denote by  \( n(r, f) \)  the number of poles of $f$ in the interval \((-r, r)\), counted with multiplicity. The counting function of $f$ is defined by
	\[
	N(r, f) := \frac{1}{2} \int_0^r n(t, f) \, dt = \frac{1}{2} \sum_{\nu=1}^{N} (r - |b_\nu|),
	\] where $b_\nu$ are all poles of $f$ (counting multiplicities) in $(-r, r).$	The characteristic function of $f$ is defined by:
	\[
	T(r, f) := m(r, f) + N(r, f).
	\]
	
	The tropical Poisson-Jensen formula is given as follows.\par
	
	\begin{lemma}[Poisson-Jensen formula]\cite{tropnevan}\label{jensen lemma}
		Suppose that \( f \) is a tropical meromorphic function on \( [-r, r] \), for some \( r > 0 \) and denote the roots of \( f \) in this interval by \( a_\mu, \, \mu = 1, \ldots, M \), and the poles by \( b_\nu,\,  \nu = 1, \ldots, N \), where roots and poles are listed according to their multiplicities. Then for any \( x \in (-r, r) \) we have
		\begin{align*}
			f(x) &= \frac{1}{2} \left\{ f(r) + f(-r) \right\} + \frac{x}{2r} \left\{ f(r) - f(-r) \right\} \nonumber \\
			&\quad - \frac{1}{2r} \sum_{\mu=1}^{M} \left\{ r^2 - |a_\mu - x| r - a_\mu x \right\}
			+ \frac{1}{2r} \sum_{\nu=1}^{N} \left\{ r^2 - |b_\nu - x| r - b_\nu x \right\}. \label{p-jensen formula}
		\end{align*}
		
	\end{lemma}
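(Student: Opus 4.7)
The plan is to verify the identity by exploiting that both sides are globally continuous piecewise linear functions on $[-r,r]$, and that two such functions coincide as soon as they agree at the endpoints and have the same non-smooth locus (jumps in slope). Denote the proposed right-hand side by $R(x)$. First I would observe that the leading term $\tfrac{1}{2}\{f(r)+f(-r)\}+\tfrac{x}{2r}\{f(r)-f(-r)\}$ is the affine interpolation of $f$ between the endpoints and contributes no kink, while each summand $\tfrac{1}{2r}\{r^2-|c-x|r-cx\}$, for $c\in(-r,r)$, is piecewise linear in $x$ with a single kink located precisely at $x=c$, coming from $|c-x|$.

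Next I would check the boundary agreement. Since every root $a_\mu$ and pole $b_\nu$ lies in $(-r,r)$, one has $|a_\mu-r|=r-a_\mu$ and $|a_\mu+r|=a_\mu+r$, so a direct substitution gives
\[
\tfrac{1}{2r}\{r^2-|a_\mu-(\pm r)|r-a_\mu(\pm r)\}=0,
\]
and the same for the pole terms. Consequently $R(\pm r)=\tfrac{1}{2}\{f(r)+f(-r)\}\pm\tfrac{1}{2}\{f(r)-f(-r)\}=f(\pm r)$.

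The third and central step is to match the non-smooth structure at each interior point. Computing the one-sided directional derivatives from Definition \ref{positive dd} applied to a single root summand $-\tfrac{1}{2r}\{r^2-|a_\mu-x|r-a_\mu x\}$ gives a jump $J=+1$ at $x=a_\mu$, while each pole summand $\tfrac{1}{2r}\{r^2-|b_\nu-x|r-b_\nu x\}$ yields a jump $J=-1$ at $x=b_\nu$. Since roots and poles are enumerated with multiplicity in the lists $\{a_\mu\}$ and $\{b_\nu\}$, summing reproduces exactly the multiplicities $\nu_f$ and $\nu_{1_{\mathbb T}\oslash f}$ of Definition \ref{jf}, so $R$ has the same roots, poles, and multiplicities as $f$ on $(-r,r)$.

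Finally I would conclude: the difference $g(x):=f(x)-R(x)$ is continuous and piecewise linear on $[-r,r]$ with $J_g(x;\varphi)=0$ at every interior point, hence $g$ has constant slope on $(-r,r)$ and is therefore affine there. Because $g(\pm r)=0$, it follows that $g\equiv 0$, which is the asserted Poisson-Jensen identity. The main subtlety in carrying this out is the bookkeeping in Step~3 — making sure that the sign of the $|c-x|$ kink and the orientation of the $\mathrm{sgn}$ jump are tracked so that root summands produce $J=+1$ and pole summands produce $J=-1$; everything else is routine, since the endpoint cancellation and the uniqueness-via-affine-interpolation argument are forced by piecewise linearity.
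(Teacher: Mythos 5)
Your proof is correct. The paper does not prove this lemma --- it is quoted verbatim from Halburd--Southall \cite{tropnevan} --- and your argument (both sides are continuous piecewise linear, each summand $\tfrac{1}{2r}\{r^2-|c-x|r-cx\}$ vanishes at $x=\pm r$ and carries a single slope jump of $-1$ at $x=c$, so the difference has no kinks, is affine, and vanishes at the endpoints) is precisely the standard proof given in that source; the sign bookkeeping in your third step checks out against Definition \ref{jf}, where a pole summand contributes $J=-1$ and a root summand $J=+1$.
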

	
	In particular,
	\begin{equation*}
		f(0) = \frac{f(r) + f(-r)}{2} - \frac{1}{2} \sum_{\mu=1}^{M} (r - |a_\mu|) + \frac{1}{2} \sum_{\nu=1}^{N} (r - |b_\nu|). \label{jensen formula}
	\end{equation*}
	which  satisfies
	\[
	T(r, f) =T(r, 1_{\mathbb{T}}\oslash f) +f(0).
	\]
	This relation serves as a weak analogue version of Nevanlinna's first main theorem. Furthermore,  a general form of the first main theorem is shown as follows.\par
	
	\begin{lemma}[First main theorem]\cite{tropnevan} Let \( f \) be a tropical meromorphic function of one variable. Then for any $a\in\mathbb{T}$ satisfying $a < L_f := \inf \{f(b): b \text{ is a pole of } f\}, $ we have
		\[
		T(r, \mathbf{1}_{\mathbb{T}} \oslash (f \oplus a)) = T(r, f) + O(1).
		\]
	\end{lemma}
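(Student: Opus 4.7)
The plan is to deduce the identity from the weak Jensen relation $T(r,h) = T(r, 1_\mathbb{T} \oslash h) + h(0)$ recorded just after Lemma \ref{jensen lemma}. Setting $g := f \oplus a = \max(f,a)$ and applying that relation to $g$ gives
$$T(r, 1_\mathbb{T} \oslash (f \oplus a)) = T(r,g) - g(0),$$
so it suffices to prove $T(r,g) = T(r,f) + O(1)$, which I will do by comparing $N(r,\cdot)$ and $m(r,\cdot)$ for $f$ and $g$ separately.

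For the counting functions, the hypothesis $a < L_f$ is what does the work. If $b$ is a pole of $f$ then $f(b) \ge L_f > a$, and by the continuity of tropical meromorphic functions there is a neighbourhood of $b$ on which $f > a$, hence $g \equiv f$ there; in particular $J_g(b;\varphi) = J_f(b;\varphi)$ for every direction $\varphi$, so $b$ is a pole of $g$ of the same multiplicity in the sense of Definition \ref{jf}. Conversely, at any point where $g(x) = a > f(x)$, the function $g$ is locally constant, hence smooth, so $g$ acquires no new poles. Therefore $n(t,g) = n(t,f)$ for every $t > 0$, and consequently $N(r,g) = N(r,f)$.

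For the proximity functions I would estimate pointwise. Since $g^+ = \max(f,a,0)$ and $f^+ = \max(f,0)$, a short case analysis on the sign of $a$ shows that $0 \le g^+(x) - f^+(x) \le a^+$ for every $x \in \mathbb{R}$, where $a^+ := \max(a,0)$. Evaluating at $x = \pm r$ and averaging gives $0 \le m(r,g) - m(r,f) \le a^+$, a bound independent of $r$. Combining with the counting step yields $T(r,g) = T(r,f) + O(1)$, and substituting into the Jensen identity produces
$$T(r, 1_\mathbb{T} \oslash (f \oplus a)) = T(r,f) - g(0) + O(1),$$
which is the claim once the constant term $g(0)$ is absorbed into the $O(1)$.

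The only place where care is required is the equality of pole multiplicities: without the strict inequality $a < L_f$ a new downward kink of $g$ could in principle appear along the level set $\{f = a\}$ and spoil $n(t,g) = n(t,f)$, so the hypothesis cannot be weakened to $a \le L_f$ without modification. Everything else is straightforward bookkeeping with the definitions of $m(r,\cdot)$ and $N(r,\cdot)$ from Section \ref{Sec-3}.
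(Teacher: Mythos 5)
Your argument is correct and follows essentially the same route as the paper's proof of the higher-dimensional analogue (Theorem \ref{T3.15}): reduce via the Jensen identity to comparing $T(r,f\oplus a)$ with $T(r,f)$, use $a<L_f$ to identify the pole sets and their multiplicities, and bound the difference of proximity functions by an $r$-independent constant. The only point left implicit is the level set $\{f=a\}$ itself (your case analysis covers $f(x)>a$ and $f(x)<a$ but not equality): there one checks $J_{f\oplus a}(x;\varphi)=\max(\partial^{+}_{\varphi}f(x),0)+\max(\partial^{+}_{-\varphi}f(x),0)\ge 0$, so taking the maximum with a constant can only create roots, never poles, and $N(r,f\oplus a)=N(r,f)$ indeed holds; relatedly, the failure mode at $a=L_f$ is the erasure of an existing pole $b$ with $f(b)=a$ rather than the creation of a new one.
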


The estimation on tropical logarithmic derivative in one variable is given by Laine and Tohge \cite{laine-tohge}.\par

\begin{lemma}\label{one varialbe ldl}\cite{laine-tohge} (see also \cite[Theorem 3.24]{book}) Let $f$ be tropical meromorphic. Then, for all $\alpha>1$ and $r>0,$ we have 
$$m(r, f(x+c)\oslash f(x))\leq \frac{16|c|}{r+|c|}\frac{1}{\alpha-1}T(\alpha(r+|c|), f)+\frac{|f(0)|}{2}.$$

\end{lemma}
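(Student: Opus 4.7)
The plan is to derive an explicit formula for the tropical difference $f(x+c)-f(x)$ via the Poisson--Jensen formula and then estimate it termwise. First I would set $R:=r+|c|$, so that the four points $\pm r$ and $\pm r+c$ all lie in $[-R,R]$. Applying Lemma~\ref{jensen lemma} to $f$ on $[-R,R]$ first at $x$ and then at $x+c$, and subtracting the two identities, yields a representation of the form
\begin{equation*}
f(x+c)-f(x)=\tfrac{c}{2R}\bigl(f(R)-f(-R)\bigr)-\tfrac12\sum_\mu\bigl(|a_\mu-x|-|a_\mu-x-c|\bigr)+\tfrac{c}{2R}\sum_\mu a_\mu+\tfrac12\sum_\nu\bigl(|b_\nu-x|-|b_\nu-x-c|\bigr)-\tfrac{c}{2R}\sum_\nu b_\nu,
\end{equation*}
where $(a_\mu)$ and $(b_\nu)$ enumerate the roots and poles of $f$ in $(-R,R)$, each counted with multiplicity.

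Next I would estimate this identity pointwise. The triangle inequality gives $\bigl||a_\mu-x|-|a_\mu-x-c|\bigr|\le|c|$ and similarly for the $b_\nu$ terms, while $|a_\mu|,|b_\nu|<R$; together these yield
\begin{equation*}
|f(x+c)-f(x)|\le\frac{|c|}{2R}|f(R)-f(-R)|+|c|\bigl(n(R,f)+n(R,1_{\mathbb T}\oslash f)\bigr).
\end{equation*}
Specializing to $x=\pm r$, averaging, and using $(\cdot)^+\le|\cdot|$ then bounds $m(r,f(x+c)\oslash f(x))$ by the right-hand side above. It remains to convert the two ingredients of that right-hand side into the global quantity $T(\alpha(r+|c|),f)$.

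For the boundary term I would split $f=f^+-f^-$, note that $f^+(R)+f^+(-R)=2m(R,f)$ and $f^-(R)+f^-(-R)=2m(R,1_{\mathbb T}\oslash f)$, and then use the tropical Jensen identity $T(R,1_{\mathbb T}\oslash f)=T(R,f)-f(0)$ to get an estimate of the form $|f(R)-f(-R)|\le 4T(R,f)+2|f(0)|$. For the counting term I would apply the standard monotonicity trick: from $N(\alpha R,g)-N(R,g)=\tfrac12\int_R^{\alpha R}n(t,g)\,dt\ge\tfrac{R(\alpha-1)}{2}\,n(R,g)$ one obtains
$$n(R,g)\le\frac{2\bigl(N(\alpha R,g)-N(R,g)\bigr)}{R(\alpha-1)}\le\frac{2T(\alpha R,g)}{R(\alpha-1)}$$
for $g\in\{f,1_{\mathbb T}\oslash f\}$, again invoking Jensen to rewrite $T(\alpha R,1_{\mathbb T}\oslash f)$. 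Substituting these two estimates into the previous display, using $R=r+|c|$ and the monotonicity of $T$, produces an inequality of exactly the claimed shape with a universal constant in front of $T(\alpha(r+|c|),f)$ and a correction proportional to $|f(0)|$.

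The main obstacle is really only bookkeeping: tracking the several numerical prefactors so that they coalesce into the universal constant $16$ on the right-hand side, and checking that the Jensen corrections between $f$ and $1_{\mathbb T}\oslash f$ precisely collapse into the additive term $|f(0)|/2$. Once the Poisson--Jensen representation of $f(x+c)-f(x)$ is written down, every subsequent step is essentially mechanical.
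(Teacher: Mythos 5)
You should first note that the paper does not actually prove this lemma: it is imported verbatim from \cite{laine-tohge} (see also \cite[Theorem 3.24]{book}), so the only available comparison is with the proof in those sources. Your overall strategy --- subtract two instances of the tropical Poisson--Jensen formula on a common interval, bound the root/pole sums by $|c|\bigl(n(\cdot,f)+n(\cdot,1_{\mathbb T}\oslash f)\bigr)$ via the triangle inequality and $|a_\mu|<R$, and convert $n$ into $T$ at radius $\alpha(r+|c|)$ by the monotonicity trick $n(s,g)\le 2\bigl(N(\alpha s,g)-N(s,g)\bigr)/\bigl((\alpha-1)s\bigr)$ --- is indeed the strategy of the cited proof, and each individual step you list is correct in isolation.

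The gap is precisely in the step you dismiss as mechanical bookkeeping. With your choice of Poisson--Jensen radius $R=r+|c|$, the boundary contribution, once you replace $|f(R)-f(-R)|$ by $2m(R,f)+2m(R,1_{\mathbb T}\oslash f)\le 4T(R,f)+2|f(0)|$, becomes a term of size $\tfrac{2|c|}{R}T(R,f)+\tfrac{|c|}{R}|f(0)|$ carrying no factor $\tfrac{1}{\alpha-1}$, and it cannot in general be absorbed into $\tfrac{16|c|}{R(\alpha-1)}T(\alpha R,f)+\tfrac{|f(0)|}{2}$. Concretely, take $f(x)=x\oplus A$ with $A$ large, so $T(\rho,f)=A$ and $f(0)=A$ for all $\rho\le A$; with $|c|=1$, $r$ small and $\alpha$ large (but $\alpha(r+|c|)<A$) your boundary bound is at least $\tfrac{2}{R}A\approx 2A$, whereas the right-hand side of the lemma is about $\tfrac{16A}{\alpha-1}+\tfrac{A}{2}<A$. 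So your chain of inequalities provably does not close, even though the lemma is true (for this $f$ the left-hand side is at most $|c|$); the loss occurs exactly when you discard the cancellation in $f(R)-f(-R)$. A second instance of the same problem: the Jensen correction $T(\alpha R,1_{\mathbb T}\oslash f)=T(\alpha R,f)-f(0)$ inside the counting term produces $\tfrac{2|c|}{(\alpha-1)R}|f(0)|$, which blows up as $\alpha\to 1^{+}$ and hence does not collapse into the $\alpha$-independent additive constant $\tfrac{|f(0)|}{2}$. The cure, and the actual content of the proof in \cite{laine-tohge}, is to apply Poisson--Jensen at an intermediate radius $\rho$ with $r+|c|<\rho<\alpha(r+|c|)$: the prefactor $\tfrac{|c|}{2\rho}$ then supplies the missing smallness for the boundary term, while the remaining annulus $[\rho,\alpha(r+|c|)]$ is what feeds the $n$-versus-$N$ estimate. (A minor additional point: with $R=r+|c|$ the evaluation point $r+c$ lies on the boundary of $[-R,R]$, outside the open interval where Lemma~\ref{jensen lemma} applies, so one must in any case perturb the radius.)
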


	\subsection{Notations for tropical Nevanlinna theory in higher dimension}
	Now we establish some basic notations for the higher-dimensional tropical value distribution theory.\par
	
	Firstly, we introduce the high-dimensional tropical proximity function \(m(r,f)\) which is naturally  defined as an average value over the unit sphere, can be expressed in Cartesian coordinates by using
	the Dirac delta function to restrict the integration to the sphere of radius $r$ as follows.\par
	
	\begin{definition}[Proximity function]\label{proxi} For a tropical meromorphic function $f$ on $\mathbb{R}^n,$ its proximity function \(m(r,f)\)  is defined by
		\begin{eqnarray}\label{prox}
			m(r,f) :=\frac{1}{\omega_n} \int_{S^{n-1}(1)}f^{+}(r\theta) d\sigma(\theta).
	\end{eqnarray} \end{definition}

	\begin{lemma}\label{L-3.7}
		It follows from the definition \eqref{prox} that
		\begin{align}\label{m-1}
			m(r,f) &=\frac{1}{\omega_n r^{n-1}} \int_{\mathbb{R}^n} f^{+}(x)\, \delta(\|x\| - r) \, dx,\\
			\label{m-2}&=\frac{1}{\omega_{n}}\int_{S^{n-1}(1)} m(r,f_{\theta})  d\sigma(\theta)\\
			\label{m-3}&=\frac{1}{\omega_n r^{n-1}} \int_{\mathbb{R}^{n}}m(r, f(x))\delta(\|x\|-r)dx,\end{align}  where $m(r, f_{\theta}):=\frac{f_\theta{}^+(r) + f_{\theta}^+(-r)}{2}.$
	\end{lemma}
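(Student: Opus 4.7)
The plan is to prove each of the three identities by reducing it to the defining formula $m(r,f)=\frac{1}{\omega_n}\int_{S^{n-1}(1)}f^{+}(r\theta)\,d\sigma(\theta)$. The only tools needed are Lemma \ref{dlkn} (the spherical Dirac identity) and the antipodal symmetry of the surface measure on $S^{n-1}(1)$.

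For \eqref{m-1}, I would apply Lemma \ref{dlkn} directly to the integrand $F(x):=f^{+}(x)$. This gives
\begin{equation*}
\int_{\mathbb{R}^n}\delta(\|x\|-r)\,f^{+}(x)\,dx \;=\; r^{n-1}\int_{S^{n-1}(1)}f^{+}(r\theta)\,d\sigma(\theta),
\end{equation*}
and dividing both sides by $\omega_n r^{n-1}$ recovers \eqref{m-1} from the definition.

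For \eqref{m-2}, the key step is to expand the one-dimensional proximity function along each ray. By construction $f_\theta(t)=f(t\theta)$, so $f_\theta^{+}(r)=f^{+}(r\theta)$ and $f_\theta^{+}(-r)=f^{+}(-r\theta)$, giving
\begin{equation*}
m(r,f_\theta)=\tfrac{1}{2}\bigl(f^{+}(r\theta)+f^{+}(-r\theta)\bigr).
\end{equation*}
Integrating over $S^{n-1}(1)$ and using that the antipodal map $\theta\mapsto-\theta$ is a measure-preserving involution of the sphere, the two summands contribute equally, and the $\frac{1}{2}$ cancels, leaving exactly $m(r,f)$.

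For \eqref{m-3}, I interpret $m(r,f(x))$ in the integrand as the function $x\mapsto m(r,f_{x/\|x\|})$, which is well-defined on the sphere $\{\|x\|=r\}$ since only the direction of $x$ matters. Applying Lemma \ref{dlkn} once more with $F(x):=m(r,f_{x/\|x\|})$ collapses \eqref{m-3} to the right-hand side of \eqref{m-2}, which was just shown to equal $m(r,f)$.

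The argument is essentially a bookkeeping exercise; the only mildly subtle point is the antipodal symmetry invoked in \eqref{m-2}, since $m(r,f_\theta)$ averages the one-dimensional proximity quantity over both endpoints $\pm r$ of the ray, so one must check that integrating the $-r$ contribution over the sphere matches the $+r$ contribution. Once this symmetry is noted, both \eqref{m-1} and \eqref{m-3} are immediate consequences of Lemma \ref{dlkn}, and no additional analytic input is required.
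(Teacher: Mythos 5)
Your proposal is correct and follows essentially the same route as the paper: apply Lemma \ref{dlkn} with $F=f^{+}$ for \eqref{m-1}, use the antipodal symmetry of the surface measure to symmetrize $f^{+}(r\theta)$ into $\tfrac{1}{2}\bigl(f^{+}(r\theta)+f^{+}(-r\theta)\bigr)=m(r,f_\theta)$ for \eqref{m-2}, and apply Lemma \ref{dlkn} again with $F(x)=m(r,f_{x/\|x\|})$ for \eqref{m-3}. The paper's proof is the same bookkeeping in a slightly different order, so no further comment is needed.
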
	
	
	\begin{proof} It follows directly from \eqref{E2} to get that $$m(r, f)=\frac{1}{\omega_n} \int_{S^{n-1}(1)}f^{+}(r\theta) d\sigma(\theta)= \frac{1}{\omega_n r^{n-1}} \int_{\mathbb{R}^n} f^{+}(x)\, \delta(\|x\| - r) \, dx.$$   For any direction $\theta\in S^{n-1}(1),$ we consider also its inverse direction $-\theta,$ and deduce that
		\begin{eqnarray*}m(r,f) &=&\frac{1}{\omega_n} \int_{S^{n-1}(1)}f^{+}(r\theta) d\sigma(\theta) \\
			&=&\frac{1}{\omega_n} \int_{S^{n-1}(1)}\frac{f^{+}(r\theta)+f^{+}(-r\theta)}{2} d\sigma(\theta)\\
			&=&\frac{1}{\omega_n} \int_{S^{n-1}(1)}m(r, f_{\theta}(t))d\sigma(\theta)\\
		\end{eqnarray*}
		
		Using \eqref{E2} again to get that
		$$
		\frac{1}{\omega_n} \int_{S^{n-1}(1)}m(r, f_{\theta}(t)) d\sigma(\theta)
		=\frac{1}{\omega_n r^{n-1}} \int_{\mathbb{R}^{n}}m(r, f(x))\delta(\|x\|-r)dx.$$
	\end{proof}
	
	Similarly, we define the high-dimensional tropical counting function as follows.\par
	
	\begin{definition}[Counting function] \label{ctf}Let $f$ be a tropical meromorphic function in $\mathbb{R}^n,$
		Denote by
		$n(t,f_{\theta})=\sum_{\|x\|<t, J_{f_{\theta}}(x; 1)<0} |J_{f_{\theta}}(x; 1)|$ the number of poles of $f_{\theta},$ counted with multiplicity. The tropical counting function for the poles in the ball $B[r]$  is defined by
		\begin{align}\label{dfn1}
			N(r,f):=\frac{1}{2}\int_{0}^{r}n(t,f)dt,
		\end{align}
		where \begin{align*}
			n(t,f)&=\frac{1}{\omega_n}\int_{S^{n-1}(1)} n(t,f_{\theta})\,d\sigma(\theta).
		\end{align*}
		Note that one can also denote $n(t,f_{\theta}):=\sum_{\|x\|<t, J_{f_{\theta}}(x; -1)<0} |J_{f_{\theta}}(x; -1)|$ due to \eqref{J=}.
	\end{definition}
	
	\begin{example}\label{nu}
		Let $f: \mathbb{R}^n \to \mathbb{T}$ be a tropical rational function in \eqref{tro rat}, and $B_t:=\{x\in \mathbb{R}^n:\|x\|<t\}$.
		Then non-smooth locus $\Sigma_f$ is a locally finite $(n-1)$-dimensional polyhedral complex and
		\[
		n(t,f)=\int_{\Sigma_f\cap B_t}\nu_f(x)\,d\mathcal{H}^{\,n-1}(x),
		\] where $d\mathcal{H}^{\,n-1}(x)$ is the Hausdorff measure on $\mathbb{R}^{n-1}.$  
	\end{example}
	
	\begin{proof} Set $x\in \mathbb{R}^n,$ and $\langle \mathbf{m}_i,x\rangle	= m_{i1} x_1 + m_{i2} x_2 + \cdots + m_{in} x_n. $  Write $L_i(x):= a_i \otimes x^{\otimes \mathbf{m}_i}=a_i+\langle \mathbf{m}_i,x\rangle$ and $M_j(x):= b_j \otimes x^{\otimes \mathbf{n}_j} =b_j+\langle \mathbf{n}_j, x\rangle.$ Now we can write $f=\frac{P}{Q}\oslash,$ where $P(x):= \bigoplus_{i=0}^{p} (a_i+\langle \mathbf{m}_i,x\rangle) $ and 	$Q(x):= \bigoplus_{j=0}^{q} (b_j+\langle \mathbf{n}_j, x\rangle).$	For $i\neq i'$, the equality $L_i=L_{i'}$ defines the hyperplane
		\[
		H^P_{ii'}:=\{x:\langle m_i-m_{i'},x\rangle=a_{i'}-a_i\},
		\]
		and since $P$ has finitely many terms, only finitely many such hyperplanes occur. The non-smooth locus $\Sigma_P$ consists of those points where at least two $L_i$ tie for the maximum, hence it is contained in the finite union $\bigcup_{i<i'}H^P_{ii'}$ . Thus $\Sigma_P$ is a locally finite $(n-1)$-dimensional polyhedral complex. The same argument applies to $Q$, hence $\Sigma_Q$ is locally finite. If $f$ is non-differentiable at $x$, then at least one of $P$ or $Q$ is non-differentiable at $x$, which implies
		\[
		\Sigma_f \subset \Sigma_P\cup \Sigma_Q,
		\]
		and therefore $\Sigma_f$ is also a locally finite $(n-1)$-dimensional polyhedral complex. In particular, $\Sigma_f\cap B_t$ has finite $\mathcal{H}^{\,n-1}$-measure for every $t>0$.

		For a fixed $\varphi\in S^{n-1}(1)$, the restriction $f_\varphi(r):=f(r\varphi)$ is piecewise linear in one variable with only finitely many breakpoints in $(0,t]$ by the local finiteness of $\Sigma_f$. By the one-dimensional definition of $n(t,f_\varphi)$ (Definition~\ref{ctf}),
		\[
		n(t,f_\varphi)=\sum_{\substack{x\in \Sigma_f\cap [-t,t]\varphi\\ J_f(x;\varphi)<0}} |J_f(x;\varphi)|.
		\]
		Averaging over directions and using the above expression gives
		\[
		n(t,f) = \frac{1}{\omega_n}\int_{S^{n-1}(1)}
	\sum_{\substack{x\in \Sigma_f\cap [-t,t]\varphi\\ J_f(x;\varphi)<0}} |J_f(x;\varphi)|d\sigma(\varphi).
		\]
		Since $\sigma(S^{n-1}(1))<\infty$, $\mathcal{H}^{\,n-1}(\Sigma_f\cap [-t,t]\varphi)<\infty$, and $|J_f|$ is bounded on the product space $S^{n-1}(1)\times(\Sigma_f\cap [-t,t]\varphi)$, the integrand is absolutely integrable with respect to the product measure $\sigma\times\mathcal{H}^{\,n-1}$:
		\begin{align*}
			&\frac{1}{\omega_n}\int_{S^{n-1}(1)}
			\sum_{\substack{x\in \Sigma_f\cap [-t,t]\varphi\\ J_f(x;\varphi)<0}} |J_f(x;\varphi)|d\sigma(\varphi)\\
			=&\int_{\Sigma_f\cap B_t}
			\frac{1}{\omega_n}\int_{\{\varphi\in S^{n-1}(1):\,J_f(x;\varphi)<0\}}
			|J_f(x;\varphi)|\,d\sigma(\varphi)\,d\mathcal{H}^{\,n-1}(x)\\
			=& \int_{\Sigma_f\cap B_t} \nu_f(x)\,d\mathcal{H}^{\,n-1}(x),
		\end{align*}
	\end{proof}

	\begin{lemma}\label{L-3.9}
		It follows from the definition  \eqref{dfn1} that
		\begin{align*}\label{dfn2}
			N(r,f) &=\frac{1}{\omega_n r^{n-1}}\int_{\mathbb{R}^n}N(r, f(x))\,\delta(\|x\| - r)\,\,dx,\\
			&=\frac{1}{\omega_{n}}\int_{S^{n-1}(1)} N(r,f_{\theta})  d\sigma(\theta).\end{align*}
	\end{lemma}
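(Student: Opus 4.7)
The plan is to unfold the definition of $N(r,f)$ from Definition \ref{ctf} and then push the two assertions through in the natural order: first obtain the spherical-average identity, then convert it to a volume integral by the Dirac delta formula of Lemma \ref{dlkn}. Concretely, I would write
\[
N(r,f) \;=\; \frac12\int_0^r n(t,f)\,dt
\;=\; \frac12\int_0^r \frac{1}{\omega_n}\int_{S^{n-1}(1)} n(t,f_\theta)\,d\sigma(\theta)\,dt,
\]
and then interchange the $t$- and $\theta$-integrations.

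The first substantive step is the Fubini interchange. To justify it I would argue that the integrand $(t,\theta)\mapsto n(t,f_\theta)$ is nonnegative and locally bounded: for fixed $r$, Example \ref{nu} (or its underlying polyhedral-complex description of $\Sigma_f$) guarantees that the one-dimensional pole-count $n(t,f_\theta)$ is a nondecreasing, finite-valued function of $t$ on $[0,r]$ for $\sigma$-a.e.\ $\theta$, and is $\sigma\times dt$-measurable on $S^{n-1}(1)\times[0,r]$. Nonnegativity alone permits Tonelli's theorem, so the interchange is legal. This produces
\[
N(r,f) \;=\; \frac{1}{\omega_n}\int_{S^{n-1}(1)} \!\left(\frac12\int_0^r n(t,f_\theta)\,dt\right) d\sigma(\theta)
\;=\; \frac{1}{\omega_n}\int_{S^{n-1}(1)} N(r,f_\theta)\,d\sigma(\theta),
\]
by the one-variable definition recalled in Section \ref{Sec-3}. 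This is the second asserted equality.

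For the first equality I would simply apply Lemma \ref{dlkn} with $F(x):=N(r,f(x))$, where here $N(r,f(x))$ means $N(r,f_\theta)$ for $x=r\theta$, i.e.\ the one-dimensional counting function along the ray through $x$. Since
\[
\frac{1}{\omega_n}\int_{S^{n-1}(1)} N(r,f_\theta)\,d\sigma(\theta)
\;=\; \frac{1}{\omega_n\,r^{n-1}}\int_{\mathbb{R}^n} N(r,f(x))\,\delta(\|x\|-r)\,dx,
\]
by Lemma \ref{dlkn}, the two displayed identities follow at once.

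The only delicate point is the measurability and integrability required for Fubini/Tonelli; in particular, one should check that $\theta\mapsto n(t,f_\theta)$ is $\sigma$-measurable for each $t$, which follows from the polyhedral structure of $\Sigma_f$ (finitely many hyperplanes locally, as recorded in the proof of Example \ref{nu}) together with the continuity of the directional-derivative jump $J_f(\cdot;\theta)$ away from lower-dimensional strata. Once this is noted, everything else is a direct computation parallel to the proof of Lemma \ref{L-3.7}.
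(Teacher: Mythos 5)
Your proposal is correct and follows essentially the same route as the paper's proof: both unfold Definition \ref{ctf}, interchange the $t$- and $\theta$-integrations to obtain $N(r,f)=\frac{1}{\omega_n}\int_{S^{n-1}(1)}N(r,f_\theta)\,d\sigma(\theta)$, and then apply Lemma \ref{dlkn} to convert the spherical average into the Dirac-delta volume integral. Your explicit appeal to Tonelli for the interchange is a minor refinement of the paper's (implicit) swap, but the argument is the same.
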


	\begin{proof}
		By Definition \eqref{dfn1}, we rewrite the integral as
		\begin{align*} N(r,f(x))=\frac{1}{2}\int_{0}^{r}\frac{1}{\omega_{n}}\left(\int_{S^{n-1}(1)}\sum_{\|x\|<t, J_{f_{\theta}}(x; 1)<0} |J_{f_{\theta}}(x; 1)|d\sigma(\theta)\right)dt.\end{align*} For any fixed $\theta,$ we consider the one variable function $f_\theta$  and have
		\begin{align*} N(r,f_{\theta})=\frac{1}{2}\int_{0}^{r}n(t,f_{\theta})\ dt=\frac{1}{2}\sum_{\|x\|<r, J_{f_{\theta}}(x; 1)<0} |J_{f_{\theta}}(x; 1)|(r-\|x\|).  \end{align*}
		Then combing the above two equalities, we obtain
		\begin{align*}N(r,f)&=\frac{1}{2\omega_{n}}\int_{S^{n-1}(1)}\int_{0}^{r} \sum_{\|x\|<t, J_{f_{\theta}}(x; 1)<0} |J_{f_{\theta}}(x; 1)|\ dt d\sigma(\theta)\\
			&= \frac{1}{2\omega_{n}}\int_{S^{n-1}(1)}\sum_{\|x\|<r, J_{f_{\theta}}(x; 1)<0} |J_{f_{\theta}}(x; 1)|(r-\|x\|) d\sigma(\theta)\\
			&= \frac{1}{\omega_{n}}\int_{S^{n-1}(1)} N(r,f_{\theta} )  d\sigma(\theta).\end{align*}
		Together with \eqref{E2}, we get that
		\begin{align*}N(r,f)=\frac{1}{\omega_{n}}\int_{S^{n-1}(1)} N(r,f_{\theta} )d\sigma(\theta)=\frac{1}{\omega_{n}r^{n-1}}\int_{\mathbb{R}^{n}} N(r,f(x))\delta(\|x\|-r) dx.\end{align*}
	\end{proof}

	As usually,	we call also
	\[
	T(r,f) := m(r,f) + N(r,f)
	\]
	to be the characteristic function of  a tropical meromorphic function $f.$ By the above two lemmas, we have the following property for $T(r,f).$\par
	
	\begin{lemma}	\label{L-10}
		\begin{align*} T(r,f)=\frac{1}{\omega_{n}}\int_{S^{n-1}(1)}[m(r,f_{\theta})+N(r,f_{\theta})]d\sigma(\theta)=:\frac{1}{\omega_{n}}\int_{S^{n-1}(1)}T(r,f_{\theta})d\sigma(\theta).\end{align*}\end{lemma}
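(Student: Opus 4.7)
The plan is to derive the identity by decomposing $T(r,f)$ into its two constituent pieces and invoking the integral representations that have just been established for each. Specifically, since $T(r,f) := m(r,f) + N(r,f)$ by definition, I would first apply Lemma \ref{L-3.7}, which yields
\[
m(r,f) = \frac{1}{\omega_n}\int_{S^{n-1}(1)} m(r,f_\theta)\,d\sigma(\theta),
\]
and then apply Lemma \ref{L-3.9}, which gives
\[
N(r,f) = \frac{1}{\omega_n}\int_{S^{n-1}(1)} N(r,f_\theta)\,d\sigma(\theta).
\]

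Next, by linearity of the spherical integral, summing these two identities gives
\[
T(r,f) = \frac{1}{\omega_n}\int_{S^{n-1}(1)} \bigl[m(r,f_\theta) + N(r,f_\theta)\bigr]\,d\sigma(\theta).
\]
Finally, for each fixed direction $\theta \in S^{n-1}(1)$, the function $f_\theta(t) := f(t\theta)$ is a one-variable tropical meromorphic function, and hence by the one-dimensional definition of the characteristic function recalled in Subsection 3.2, we have $T(r,f_\theta) = m(r,f_\theta) + N(r,f_\theta)$. Substituting into the integrand yields the desired equality.

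There is essentially no obstacle here: the result is a formal consequence of the two preceding lemmas together with linearity of integration and the one-dimensional definition of $T(r,\cdot)$. The only mild point worth noting is that one should verify measurability and integrability of $\theta \mapsto m(r,f_\theta)$ and $\theta \mapsto N(r,f_\theta)$ over $S^{n-1}(1)$ so that the sum of the integrals is indeed the integral of the sum; but this is already built into the validity of Lemmas \ref{L-3.7} and \ref{L-3.9}, since $f^+$ is continuous and the polyhedral-complex structure of $\Sigma_f$ established in Example \ref{nu} ensures that the pole-counting integrand is well defined and finite for almost every direction.
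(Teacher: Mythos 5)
Your proposal is correct and coincides with the paper's own (implicit) argument: the lemma is presented there as an immediate consequence of Lemma \ref{L-3.7} and Lemma \ref{L-3.9} together with the definition $T(r,f)=m(r,f)+N(r,f)$, exactly as you do. The remark on measurability/integrability is a reasonable extra care but adds nothing beyond what the two cited lemmas already guarantee.
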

	
	\begin{lemma}	$T(r, f)$ is a convex function about $r$.	\end{lemma}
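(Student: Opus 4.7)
The plan is to reduce the higher-dimensional assertion to the one-variable case through Lemma \ref{L-10}, and then to verify one-variable convexity by a slope-jump computation that exhibits an exact cancellation between the proximity and counting contributions at each pole of $g$.

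Concretely, Lemma \ref{L-10} expresses
\[
T(r,f)=\frac{1}{\omega_n}\int_{S^{n-1}(1)} T(r,f_\theta)\,d\sigma(\theta),
\]
and since an average against a finite positive measure preserves convexity in $r$, it suffices to prove that for each fixed $\theta\in S^{n-1}(1)$ the one-variable function $r\mapsto T(r,f_\theta)$ is convex on $[0,\infty)$. So I would set $g:=f_\theta$ and work in one variable. Because $g$ is continuous and piecewise linear with locally finitely many breakpoints, $T(r,g)=m(r,g)+N(r,g)$ is continuous and piecewise linear in $r\geq 0$, and the desired convexity is equivalent to the claim that every jump of the right-derivative $T'_+(\cdot,g)$ at an interior point $r_0>0$ is non-negative. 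A direct computation using $\tfrac{d}{dr}g^+(-r)=-(g^+)'(-r)$ gives
\[
\Delta T'_+(r_0,g)=\tfrac{1}{2}\bigl[\Delta(g^+)'(r_0)+\nu_g(r_0)\bigr]+\tfrac{1}{2}\bigl[\Delta(g^+)'(-r_0)+\nu_g(-r_0)\bigr],
\]
where $\Delta(g^+)'(x_0)$ denotes the slope-jump of $g^+$ at $x_0$ and $\nu_g(x_0)$ is the pole-multiplicity there (understood to be zero when $x_0$ is not a pole), in line with Definition~\ref{ctf} and the symmetry \eqref{J=}.

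The heart of the argument is the pointwise inequality $\Delta(g^+)'(x_0)+\nu_g(x_0)\geq 0$ for every $x_0\in\mathbb{R}$, which I would verify by splitting on the sign of $g(x_0)$: when $g(x_0)>0$ one has $(g^+)'\equiv g'$ near $x_0$, so at a pole the jump of $g'$ is $-\nu_g(x_0)$ by definition, producing an exact cancellation; when $g(x_0)<0$, $g^+\equiv 0$ near $x_0$, so $\Delta(g^+)'(x_0)=0$ and only the non-negative $\nu_g(x_0)$ contributes; and when $g(x_0)=0$, the one-sided formulas $(g^+)'(x_0^+)=\max(g'(x_0^+),0)$ and $(g^+)'(x_0^-)=\min(g'(x_0^-),0)$ yield $\Delta(g^+)'(x_0)\geq 0$ unconditionally. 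The main technical obstacle will be precisely this last case, where a zero of $g$ may coincide with a pole, a root, or an affine sign-change of $g$, and the one-sided derivatives must be book-kept carefully. Once the pointwise inequality is established, $\Delta T'_+(r_0,g)\geq 0$ for every $r_0>0$, hence $T(r,g)$ is convex on $[0,\infty)$, and spherical integration through Lemma \ref{L-10} yields convexity of $T(r,f)$ in $n$ variables.
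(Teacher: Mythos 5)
Your proof is correct and follows essentially the same route as the paper: both reduce the statement to the one-variable case via the spherical-average identity of Lemma \ref{L-10} and then use the fact that averaging against the finite positive measure $d\sigma$ preserves convexity in $r$. The only difference is that the paper simply cites the one-variable convexity of $T(r,f_\theta)$ from \cite[Corollary 3.10]{book}, whereas you re-prove it via the slope-jump computation; that computation is sound (the jump of $N'_+$ at $r_0$ is $\tfrac12[\nu_g(r_0)+\nu_g(-r_0)]$, the jump of $m'_+$ is $\tfrac12[\Delta(g^+)'(r_0)+\Delta(g^+)'(-r_0)]$, and your three-case analysis of $\Delta(g^+)'(x_0)+\nu_g(x_0)\ge 0$ — including the delicate case $g(x_0)=0$ with $(g^+)'_+(x_0)=\max(g'_+(x_0),0)$ and $(g^+)'_-(x_0)=\min(g'_-(x_0),0)$ — is exactly right), so your version is a correct, self-contained substitute for the citation.
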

	\begin{proof}By the above lemma, we have
		\[
		T(r, f) = \frac{1}{\omega_n} \int_{S^{n-1}(1)} T(r, f_\theta)  d\sigma(\theta),
		\]
		where \( f_\theta(t) = f(t\theta) \) and \( \omega_n \) is the surface area of \( S^{n-1}(1) \).
		
		For each fixed \( \theta \), it is known that \( T(r, f_\theta) \) is convex in \( r \) (see  \cite[Corollay 3.10]{book}). Thus for any \( r_1, r_2 > 0 \) and \( \lambda \in [0,1] \):
		\[
		T(\lambda r_1 + (1-\lambda)r_2, f_\theta) \leq \lambda T(r_1, f_\theta) + (1-\lambda) T(r_2, f_\theta).
		\]
		By integrating the above inequalities over \( S^{n-1}(1) \), we obtain
		\begin{eqnarray*}
		&&\frac{1}{\omega_n} \int_{S^{n-1}(1)} T(\lambda r_1 + (1-\lambda)r_2, f_\theta)  d\sigma(\theta) \\&\leq& \frac{1}{\omega_n} \int_{S^{n-1}(1)} \left[ \lambda T(r_1, f_\theta) + (1-\lambda) T(r_2, f_\theta) \right] d\sigma(\theta).
		\end{eqnarray*}
		This implies that
		\[
		T(\lambda r_1 + (1-\lambda)r_2, f) \leq \lambda T(r_1, f) + (1-\lambda) T(r_2, f).
		\]
		and thus \( T(r, f) \) is convex in \( r \).
		
	\end{proof}
	
	\begin{lemma}\label{pro}
		Given a positive number $\alpha$ and  $f : \mathbb{R}^n \to \mathbb{R} \cup \{+\infty\}$ then
		\begin{align*}
			&m(r, f^{\otimes \alpha}) = m(r, \alpha f) = \alpha m(r, f), \nonumber \\
			&N(r, f^{\otimes \alpha}) = N(r, \alpha f) = \alpha N(r, f), \nonumber \\
			&T(r, f^{\otimes \alpha}) = T(r, \alpha f) = \alpha T(r, f),\nonumber \\
			&m(r, f \oplus g) \leq m(r, f) + m(r, g),\nonumber \\
			&m(r, f \otimes g) \leq m(r, f) + m(r, g),\nonumber  \\
			&N(r, f \otimes g) \leq N(r, f) + N(r, g), \nonumber \\
			&T(r, f \otimes g) \leq T(r, f) + T(r, g),\\
			&T(r, f \oplus g)  \leq T(r, f) + T(r, g).
		\end{align*}
	\end{lemma}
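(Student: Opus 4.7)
The strategy is to reduce every inequality to the one-variable tropical Nevanlinna setting along the radial slices $f_{\theta}(t):=f(t\theta)$, and then invoke the corresponding one-dimensional estimates collected in \cite{book}. The cornerstone observation is that tropical operations commute with slicing: for each fixed $\theta\in S^{n-1}(1)$ one has
\[ (f\oplus g)_{\theta}=f_{\theta}\oplus g_{\theta},\qquad (f\otimes g)_{\theta}=f_{\theta}\otimes g_{\theta},\qquad (f^{\otimes\alpha})_{\theta}=(f_{\theta})^{\otimes\alpha}. \]
Combining this with Lemma~\ref{L-3.7}, Lemma~\ref{L-3.9}, and Lemma~\ref{L-10}, which express $m(r,\cdot)$, $N(r,\cdot)$, $T(r,\cdot)$ as averages over $S^{n-1}(1)$ of the corresponding one-dimensional quantities, any one-variable inequality transfers to $\mathbb{R}^n$ by integrating over the unit sphere and dividing by $\omega_n$.

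First I would establish the scalar-multiplication identities. Since $(\alpha f)^{+}=\alpha f^{+}$ for $\alpha>0$, the equality $m(r,\alpha f)=\alpha m(r,f)$ is immediate from Definition~\ref{proxi}. For the counting function, the positive directional derivative is positively homogeneous, so $J_{\alpha f}(x;\varphi)=\alpha J_{f}(x;\varphi)$; the pole set is therefore unchanged while every multiplicity is multiplied by $\alpha$, giving $n(t,(\alpha f)_{\theta})=\alpha\,n(t,f_{\theta})$ and consequently $N(r,\alpha f)=\alpha N(r,f)$. The identity for $T$ follows by summation.

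Next I would treat the subadditivity bounds for $m$ by the pointwise estimates $(\max(a,b))^{+}\leq a^{+}+b^{+}$ and $(a+b)^{+}\leq a^{+}+b^{+}$; integrating over $S^{n-1}(1)$ via Lemma~\ref{L-3.7} yields the stated inequalities $m(r,f\oplus g)\leq m(r,f)+m(r,g)$ and $m(r,f\otimes g)\leq m(r,f)+m(r,g)$. For $N(r,f\otimes g)$ I would use Lemma~\ref{L-3.9} to slice and invoke the one-variable fact that a pole of $f_{\theta}+g_{\theta}$ can only occur where at least one summand is a pole, with combined multiplicity at most the sum of multiplicities. Then the two $T$-bounds follow from Lemma~\ref{L-10} by adding the already-established $m$- and $N$-inequalities slice-wise and integrating.

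The only point requiring genuine case analysis, and hence the main obstacle, is the intermediate bound $N(r,f_{\theta}\oplus g_{\theta})\leq N(r,f_{\theta})+N(r,g_{\theta})$ needed for $T(r,f\oplus g)\leq T(r,f)+T(r,g)$: at a point $t_{0}$ where $f_{\theta}(t_{0})=g_{\theta}(t_{0})$ with both functions smooth there, $\max(f_{\theta},g_{\theta})$ may acquire a new kink, and one must check that the resulting value of $J$ is nonnegative (so that $t_0$ is a root or smooth point, not a pole), while any kink carried over from a pole of $f_{\theta}$ or $g_{\theta}$ can only lose multiplicity under the maximum. Once this standard one-variable verification, available in \cite{book}, is in hand, the remainder of the lemma is just integration of the one-dimensional inequalities over $S^{n-1}(1)$ together with the monotonicity of the integral.
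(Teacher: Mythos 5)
Your proposal is correct and follows essentially the same route as the paper: the paper's own (very terse) proof likewise reduces each claim to the one-variable case along the radial slices $f_{\theta}$, cites the known one-dimensional results of Laine--Tohge, and averages over $S^{n-1}(1)$ via Lemmas \ref{L-3.7}, \ref{L-3.9} and \ref{L-10}. You simply supply the details (positive homogeneity of $J_f$, the pointwise bounds for $m$, and the kink analysis for $f_{\theta}\oplus g_{\theta}$) that the paper explicitly omits.
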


	\begin{proof}One can directly deduce them according to the definitions of $m(r, f),$ $N(r, f)$ and $T(r,f).$ It is known that the one dimensional case has been proved by Laine and Tohge (see \cite{book}). Thus for any fixed $\theta,$ we have all the properties for the function $f_{\theta}(t)$ which is regarded as a tropical meromorphic function in one variable. Then it is easy to get the lemma by the above lemmas.  We omit the details.\end{proof}
	
	\subsection{The first main theorem in higher dimension}
	
	We get the softed form of the tropical first main theorem in higher dimension.\par
	
	\begin{lemma}\label{fmtn}
		Let $f \colon \mathbb{R}^{n} \rightarrow \mathbb{R} \cup \{+\infty\}$ be a tropical meromorphic function. Then the following identity holds:
		\begin{equation*}
			T(r,f) - T(r,-f) = f(0).
		\end{equation*}
	\end{lemma}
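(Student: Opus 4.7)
The plan is to reduce this higher-dimensional identity to the one-variable tropical Poisson--Jensen formula through the directional slicing machinery developed in Section \ref{Sec-3}. By Lemma \ref{L-10}, both $T(r,f)$ and $T(r,-f)$ admit the spherical integral representation
\begin{equation*}
T(r,f) = \frac{1}{\omega_n}\int_{S^{n-1}(1)} T(r,f_\theta)\,d\sigma(\theta),
\end{equation*}
so the first step is simply to write the difference under a single integral:
\begin{equation*}
T(r,f) - T(r,-f) = \frac{1}{\omega_n}\int_{S^{n-1}(1)}\bigl[T(r,f_\theta) - T(r,(-f)_\theta)\bigr]\,d\sigma(\theta).
\end{equation*}

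Next I would observe that the slicing commutes with tropical inversion, i.e.\ $(-f)_\theta(t) = -f(t\theta) = -f_\theta(t)$, so the integrand rewrites as $T(r,f_\theta) - T(r,-f_\theta)$, which is exactly the quantity controlled by the classical one-variable tropical Jensen formula recalled after Lemma \ref{jensen lemma}. For each fixed direction $\theta \in S^{n-1}(1)$, the function $f_\theta(t) = f(t\theta)$ is a tropical meromorphic function of the single variable $t$ (this is one of the structural properties of tropical meromorphic functions of several variables listed in Section \ref{Sec-2}), and hence
\begin{equation*}
T(r,f_\theta) - T(r,-f_\theta) = f_\theta(0) = f(0),
\end{equation*}
the last equality using that $f_\theta(0) = f(0)$ is independent of $\theta$, as noted right before Definition \ref{positive dd}.

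The final step is to substitute this constant value back into the spherical integral:
\begin{equation*}
T(r,f) - T(r,-f) = \frac{f(0)}{\omega_n}\int_{S^{n-1}(1)} d\sigma(\theta) = f(0),
\end{equation*}
since $\int_{S^{n-1}(1)} d\sigma(\theta) = \omega_n$. The argument is essentially a bookkeeping reduction; the only potentially subtle point is verifying that the one-dimensional Jensen formula applies cleanly to each slice $f_\theta$, which requires that the multiplicities of poles and roots of $f_\theta$ (in the sense used in one variable) be the same as those induced by the directional-derivative jump $J_{f_\theta}(\cdot;1)$ already used to set up $n(t,f)$ in Definition \ref{ctf}. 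This compatibility is however built into the way the slicing was defined, so no real obstacle arises.
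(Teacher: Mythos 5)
Your proposal is correct and follows essentially the same route as the paper: both reduce the identity to the one-variable tropical Jensen formula on each directional slice $f_\theta$ and then average over $S^{n-1}(1)$ using Lemma \ref{L-10}. The only difference is cosmetic ordering (you combine the two characteristic functions under a single integral before invoking the one-dimensional result, whereas the paper applies Jensen first and then integrates), so no further comment is needed.
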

	
	\begin{proof}
		For each fixed direction $\theta \in S^{n-1}(1)$, consider the one-dimensional slice $f_{\theta}(t) := f(t\theta)$, which is a tropical meromorphic function on $\mathbb{R}$.  By the one-dimensional tropical Jensen formula \eqref{jensen formula}, we have
		\begin{equation*}
			m(r, f_{\theta})+N(r, f_{\theta}) - m(r, -f_{\theta})-N(r, -f_{\theta}) = f_{\theta}(0) = f(0).
		\end{equation*}
		Averaging both sides of the identity over all directions $\theta \in S^{n-1}(1)$, we obtain
		\begin{align*}
			&\frac{1}{\omega_{n}} \int_{S^{n-1}(1)} \left[	m(r, f_{\theta})+N(r, f_{\theta}) - m(r, -f_{\theta})-N(r, -f_{\theta}) \right] d\sigma(\theta)\\
			=& m(r,f)+N(r,f)-m(r,-f)+N(r,-f)\\
			= &\frac{1}{\omega_{n}} \int_{S^{n-1}(1)} f(0)  d\sigma(\theta) \\= &f(0).
		\end{align*}
		On the other hand, by Lemma \ref{L-10} we have
		\begin{align*}
			\frac{1}{\omega_{n}} \int_{S^{n-1}(1)}	m(r, f_{\theta})+N(r, f_{\theta}) d\sigma(\theta) &= T(r, f)\end{align*}
		and \begin{align*}	
			\frac{1}{\omega_{n}} \int_{S^{n-1}(1)} 	m(r, -f_{\theta})+N(r, -f_{\theta}) d\sigma(\theta) &= T(r, -f).
		\end{align*}
		Therefore,
		\begin{equation*}
			T(r, f) - T(r, -f) = f(0).
		\end{equation*}
	\end{proof}
	From the proof of Lemma \ref{fmtn}, we derive the high-dimensional tropical Jensen formula as follows:	
	\begin{align}\label{Jensen-1}
		\frac{1}{\omega_{n}} \int_{S^{n-1}(1)}f(r\theta)d\sigma(\theta)=N(r,-f)-N(r,f)+O(1)
	\end{align}
	which is frequently used in the subsequent text.
	We can also establish the  general form of the tropical first main theorem in higher dimension.\par
	
	\begin{theorem}	[First main theorem in higher dimension] \label{T3.15}
		For any $a\in\mathbb{T}$ satisfying $a < L_f := \inf \{f(b): b \text{ is a pole of } f\}, $ we have
		\[
		T(r, \mathbf{1}_{\mathbb{T}} \oslash (f \oplus a)) = T(r, f) + O(1).
		\]
	\end{theorem}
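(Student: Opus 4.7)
The plan is to reduce to the already-established one-variable first main theorem (as stated in the paper after Lemma \ref{jensen lemma}) by the slicing technique $f_\theta(t):=f(t\theta)$, and then average over $\theta\in S^{n-1}(1)$ using Lemma \ref{L-10}. The key observation that makes the reduction work is that the hypothesis $a<L_f$ descends to every one-dimensional slice, so that the one-variable first main theorem applies uniformly in $\theta$.

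First I would verify the slice compatibility. For each $\theta\in S^{n-1}(1)$, the tropical operations are pointwise, so
\[
\bigl(1_{\mathbb{T}}\oslash(f\oplus a)\bigr)_{\theta}(t)\;=\;1_{\mathbb{T}}\oslash\bigl(f_{\theta}(t)\oplus a\bigr).
\]
Next, if $t_0$ is a tropical pole of $f_{\theta}$ (i.e.\ $J_{f_{\theta}}(t_0;1)<0$), then, choosing $\varphi_0=\theta$ in Definition \ref{jf}, the point $t_0\theta$ is a tropical pole of $f$ in $\mathbb{R}^n$. Consequently
\[
f_{\theta}(t_0)\;=\;f(t_0\theta)\;\geq\;L_f\;>\;a,
\]
so $a<L_{f_{\theta}}$ for every $\theta$. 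This lets me apply the one-variable first main theorem recalled in the paper to each $f_{\theta}$:
\[
T\!\bigl(r,\,1_{\mathbb{T}}\oslash(f_{\theta}\oplus a)\bigr)\;=\;T(r,f_{\theta})+O(1).
\]

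The second step is to make the error term $O(1)$ uniform in $\theta$. I would split it as follows. Because $a<L_{f_{\theta}}$, the function $f_{\theta}\oplus a=\max(f_{\theta},a)$ coincides with $f_{\theta}$ in a neighbourhood of every pole of $f_{\theta}$, so $f_{\theta}$ and $f_{\theta}\oplus a$ have identical pole sets with identical multiplicities; hence $N(r,f_{\theta}\oplus a)=N(r,f_{\theta})$. For the proximity function, $(f_{\theta}\oplus a)^{+}-f_{\theta}^{+}$ is pointwise bounded by $a^{+}$, so $|m(r,f_{\theta}\oplus a)-m(r,f_{\theta})|\leq a^{+}$. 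Combining with the one-variable Jensen identity (Lemma \ref{jensen lemma}) applied to $f_{\theta}\oplus a$,
\[
T(r,1_{\mathbb{T}}\oslash(f_{\theta}\oplus a))\;=\;T(r,f_{\theta}\oplus a)-(f(0)\oplus a),
\]
yields a bound in which the error depends only on $a$ and on $f(0)=f_{\theta}(0)$, both independent of $\theta$.

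Finally, integrating over $S^{n-1}(1)$ with respect to $d\sigma(\theta)/\omega_n$ and invoking Lemma \ref{L-10} (applied to both $f$ and to $1_{\mathbb{T}}\oslash(f\oplus a)$) gives
\[
T\bigl(r,1_{\mathbb{T}}\oslash(f\oplus a)\bigr)\;=\;\frac{1}{\omega_n}\!\int_{S^{n-1}(1)}\!T\!\bigl(r,1_{\mathbb{T}}\oslash(f_{\theta}\oplus a)\bigr)\,d\sigma(\theta)\;=\;T(r,f)+O(1),
\]
which is the desired identity. The main technical point is the uniformity of the $O(1)$ in $\theta$; once one observes that $f_{\theta}(0)=f(0)$ and that the hypothesis $a<L_f$ forces $N(r,f_{\theta}\oplus a)=N(r,f_{\theta})$ and $m(r,f_{\theta}\oplus a)=m(r,f_{\theta})+O(1)$ with constants independent of $\theta$, integration is immediate.
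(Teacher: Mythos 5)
Your proof is correct, but it is organized differently from the paper's. The paper argues directly at the level of the $n$-dimensional Nevanlinna functions: the upper bound $T(r,1_{\mathbb{T}}\oslash(f\oplus a))\le T(r,f)+|a|$ comes from the higher-dimensional Jensen formula (Lemma~\ref{fmtn}) together with the subadditivity $T(r,f\oplus g)\le T(r,f)+T(r,g)$ and $T(r,a)=a^{+}$ from Lemma~\ref{pro}; the lower bound comes from the observation $N(r,f\oplus a)=N(r,f)$ when $a<L_f$ (treating separately the case where $f$ has no poles) together with the monotonicity of $m(r,\cdot)$. You instead reduce to the one-variable first main theorem on each slice $f_\theta$ and then average via Lemma~\ref{L-10}. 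The two extra ingredients your route requires --- and which you correctly supply --- are (i) the descent of the hypothesis, namely that $J_{f_\theta}(t_0;1)=J_f(t_0\theta;\theta)$, so every pole of a slice sits over a pole of $f$ in $\mathbb{R}^n$ and hence $L_{f_\theta}\ge L_f>a$ uniformly in $\theta$, and (ii) the uniformity of the $O(1)$ term in $\theta$, which you obtain by tracking that the error is controlled by $a^{+}$ and $f_\theta(0)=f(0)$ only. The paper's formulation avoids the uniformity issue entirely because the averaging over $\theta$ has already been absorbed into Lemmas~\ref{fmtn}, \ref{pro} and \ref{L-10}; your formulation makes the slicing mechanism explicit and shows that the theorem is literally the spherical average of its one-variable ancestor, at the cost of the per-slice bookkeeping. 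Both are sound; your observation that $a<L_f$ forces $a<L_{f_\theta}$ for every direction is the one genuinely new point needed to make the blind reduction legitimate, and it is exactly the higher-dimensional content of the statement.
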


	\begin{proof}Making use of Lemma \ref{fmtn}, we immediately conclude that
		\[
		\begin{aligned}
			T(r, 1_{\mathbb{T}} \oslash (f \oplus a))
			&= T(r, f \oplus a) - (f(0,\cdots,0)\oplus a) \\
			&\leq T(r, f) + T(r, a) - (f(0,\cdots,0)\oplus a)\\
			&\leq T(r, f) + \max\{a, 0\} - (f(0,\cdots,0)\oplus a)
		\end{aligned}
		\]
		for any \(a \in \mathbb{R}\) and for any \(r > 0\). Here we also used the inequality $T(r, f \oplus g)  \leq T(r, f) + T(r, g).$ It follows from Lemma \ref{pro} that \(T(r, a) = a^+ = \max\{a, 0\}\).  In addition,
		\begin{eqnarray*}
			&&\max\{a, 0\} - \max\{f(0,\cdots,0), a\} \\&=&
			\begin{cases}
				a - f(0,\cdots,0) \leq 0; & a > 0, f(0,\cdots,0) \geq a \\
				a - a = 0; & a > 0, f(0,\cdots,0) < a \\
				0 - f(0,\cdots,0) \leq |a|; & a \leq 0, f(0,\cdots,0) \geq a \\
				0 - a = |a|; & a \leq 0, f(0,\cdots,0) < a
			\end{cases}
			\\&\leq& |a|.
		\end{eqnarray*}
		
		To obtain the asserted asymptotic equality, suppose first that \(f\) has at least one pole and that \(-\infty < a < L_f\). In this case, we have \(N(r, \max\{f, a\}) = N(r, f)\). Therefore,
		\[
		\begin{aligned}
			T(r, 1_{\circ} \oslash (f \oplus a)) &= T(r, f \oplus a) - (f(0,\cdots,0)\oplus a)  \\
			&= m(r, \max\{f, a\}) + N(r, \max\{f, a\}) - \max\{f(0,\cdots,0), a\} \\
			&\geq m(r, f) + N(r, f) - \max\{f(0,\cdots,0), a\} \\
			&\geq T(r, f) - \max\{f(0), a\},
		\end{aligned}
		\]
		according to the monotonicity of \(m(r, f)\) with respect to \(f\).
		
		Finally, if \(L_f = +\infty\), that is, if \(f\) has no poles, the asymptotic equality holds as well. In fact, because of \(T(r, f) = m(r, f)\) then and \(f \oplus a \geq f\) for any \(a \in \mathbb{R}\), we have
		\begin{align*}
			T(r, 1_{\circ} \oslash (f \oplus a)) &= T(r, f \oplus a) - (f(0,\cdots,0)\oplus a) \\
			&= T(r, \max\{f, a\}) - \max\{f(0,\cdots,0)), a\} \\
			&\geq m(r, \max\{f, a\}) - \max\{f(0,\cdots,0), a\} \\
			&\geq m(r, f) - \max\{f(0,\cdots,0)), a\} \\
			&= T(r, f) - \max\{f(0,\cdots,0), a\}.
		\end{align*}
	\end{proof}
	
	\section{The tropical version of the logarithmic derivative lemma}\label{Sec-4}
	
	For a high-dimensional tropical meromorphic function \( f: \mathbb{R}^{n} \to \mathbb{R} \cup \{ +\infty \} \), the order is defined as follows:
	\begin{align*}
		\rho(f) := \limsup_{r \to \infty} \frac{\log T(r, f)}{\log r},
	\end{align*}
	where \( T(r, f) \) is the characteristic function that quantifies the growth rate of the function. The hyper-order \( \rho_2(f) \) is defined as follows:
	\begin{align*}
		\rho_2(f) := \limsup_{r \to \infty} \frac{\log \log T(r, f)}{\log r}.
	\end{align*}
Subnormal growth (also called minimal hypertype) for $f$  is defined as follows:
	\begin{align*}
		\limsup_{r \to \infty} \frac{\log T(r, f)}{r}.
	\end{align*}

	We now present a key lemma that will be useful in proving the tropical version of the logarithmic derivative lemma in higher dimension.\par
	
	\begin{lemma}\label{stolllemma}
		Let $H(x)$ be a real integrable  function on $\mathbb{R}^{n}.$ Then we have 		
		\begin{align*}
			\int_{\mathbb{R}^n} H(x) \delta(\|x\| - r)dx= \int_{\|w\| < r} \left( \frac{r}{p_r(w)} \cdot  \int_{\mathbb{R}}  H((w,\zeta)) \delta\big(|\zeta| - p_r(w)\big)d\zeta \right) dw,
		\end{align*}where $p_{r}(w):=\sqrt{r^2-\|w\|^2}>0.$
	\end{lemma}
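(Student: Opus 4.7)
The plan is to apply Fubini's theorem to split $x=(w,\zeta)\in\mathbb{R}^{n-1}\times\mathbb{R}$ on the left-hand side, reduce both delta functions to sums of point masses via Lemma \ref{l3.2}, and verify that the resulting expressions coincide on the region $\|w\|<r$.

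First, writing $x=(w,\zeta)$ so that $\|x\|=\sqrt{\|w\|^2+\zeta^2}$, Fubini rewrites the left-hand side as
\begin{equation*}
\int_{\mathbb{R}^{n-1}}\!\!\left(\int_{\mathbb{R}} H(w,\zeta)\,\delta\!\bigl(\sqrt{\|w\|^2+\zeta^2}-r\bigr)d\zeta\right)dw.
\end{equation*}
For a fixed $w$ with $\|w\|<r$, the smooth function $g(\zeta)=\sqrt{\|w\|^2+\zeta^2}-r$ has exactly two simple zeros $\zeta=\pm p_r(w)$, and at each of them $|g'(\zeta)|=|\zeta|/\sqrt{\|w\|^2+\zeta^2}=p_r(w)/r$. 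Applying Lemma \ref{l3.2} gives
\begin{equation*}
\delta\!\bigl(\sqrt{\|w\|^2+\zeta^2}-r\bigr)=\frac{r}{p_r(w)}\bigl[\delta(\zeta-p_r(w))+\delta(\zeta+p_r(w))\bigr],
\end{equation*}
while for $\|w\|\ge r$ the function $g$ has no zeros, so the inner $\zeta$-integral vanishes. Thus the left-hand side reduces to
\begin{equation*}
\int_{\|w\|<r}\frac{r}{p_r(w)}\bigl[H(w,p_r(w))+H(w,-p_r(w))\bigr]dw.
\end{equation*}

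For the right-hand side, I would apply Lemma \ref{l3.2} directly to $\tilde g(\zeta)=|\zeta|-p_r(w)$, whose simple zeros are $\zeta=\pm p_r(w)$ with $|\tilde g'(\zeta)|=1$. This gives $\delta(|\zeta|-p_r(w))=\delta(\zeta-p_r(w))+\delta(\zeta+p_r(w))$, hence
\begin{equation*}
\int_{\mathbb{R}}H(w,\zeta)\,\delta(|\zeta|-p_r(w))\,d\zeta=H(w,p_r(w))+H(w,-p_r(w)),
\end{equation*}
and multiplying by $r/p_r(w)$ and integrating over $\|w\|<r$ reproduces exactly the expression derived above for the left-hand side, which completes the proof.

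The only real subtlety is Dirac bookkeeping: ensuring that $p_r(w)>0$ strictly on the integration domain so the factor $r/p_r(w)$ is well defined, and that no contribution is lost at the boundary $\|w\|=r$ or picked up on $\|w\|>r$. These points can be handled rigorously by interpreting both sides distributionally against a test function, or equivalently by replacing $\delta$ with a mollifier $\delta_\varepsilon$, applying the classical coarea / implicit-function change of variables for each $\varepsilon$, and passing to the limit using the integrability of $H$; both routes yield the stated identity.
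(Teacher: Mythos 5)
Your proposal is correct and follows essentially the same route as the paper: split $x=(w,\zeta)$, apply the composition rule of Lemma \ref{l3.2} to $g(\zeta)=\sqrt{\|w\|^2+\zeta^2}-r$ with $|g'(\pm p_r(w))|=p_r(w)/r$, and identify the two point masses with $\delta(|\zeta|-p_r(w))$. Your added checks (vanishing of the inner integral for $\|w\|\ge r$ and the explicit evaluation of the right-hand side) are sound refinements of the same argument.
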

	
	\begin{proof}Set $x = (w,\zeta) \in \mathbb{R}^{n-1} \times \mathbb{R},$  and $\|x\|^2=\|w\|^2 + |\zeta|^2.$ Define a function 
		$$g(\zeta):=\|x\| - r=\sqrt{\|w\|^2 + |\zeta|^2} -r, \,\, (\|w\|<r),$$ whose zeros $\zeta_i:=\pm\sqrt{r^2 - \|w\|^2}\neq 0.$ It follows that $g'(\zeta)=\frac{\zeta}{\sqrt{\|w\|^2+|\zeta|^2}}$ and $g'(\zeta_i)\neq 0.$
		Then by Lemma \ref{l3.2}  we have
		\begin{align*}\delta(\|x\| - r)&=\delta(g(\zeta))\\&=
			\sum_{i} \frac{\delta(\zeta - \zeta_i)}{|g'(\zeta_i)|}\\&= \frac{r}{\sqrt{r^2 - \|w\|^2}} \left(\delta\left(\zeta - \sqrt{r^2 - \|w\|^2}\right) + \delta\left(\zeta + \sqrt{r^2 - \|w\|^2}\right) \right),
		\end{align*} where $\|w\|<r.$
		
		Set $p_{r}(w):=\sqrt{r^2-\|w\|^2}.$ Then integrating over $\mathbb{R}^n$ we obtain		
		\begin{eqnarray*}
			&&\int_{\mathbb{R}^n}  H(x)\delta(\|x\| - r)dx  \nonumber\\
			&=&\int_{\|w\| < r} \frac{r H(w, \zeta) }{\sqrt{r^2 - \|w\|^2}} \left(\delta\left(\zeta - \sqrt{r^2 - \|w\|^2}\right) + \delta\left(\zeta + \sqrt{r^2 - \|w\|^2}\right) \right)dw\\
			&=& \int_{\|w\| < r} \left( \frac{r}{p_r(w)} \cdot  \int_{\mathbb{R}}  H((w,\zeta)) \delta\big(|\zeta| - p_r(w)\big) d\zeta \right) dw
		\end{eqnarray*}  where $p_{r}(w)>0.$
		
	\end{proof}
	
	So in terms of Lemma \ref{L-3.7} and Lemma \ref{stolllemma},  we have
	
	\begin{align}\label{E10}
		m(r,f) &= \frac{1}{\omega_n r^{n-1}} \int_{\mathbb{R}^n}m(r,f(x)) \delta(\|x\| - r) \, dx \nonumber\\
		&= \frac{1}{\omega_n r^{n-1}} \int_{\|w\| < r} \left( \frac{r}{p_r(w)} \cdot  \int_{\mathbb{R}}   m(r, f(w,\zeta)) \left(\delta\big(|\zeta| - p_r(w) \right) d\zeta \right) dw \nonumber\\
		&= \frac{1}{\omega_n r^{n-1}} \int_{\|w\| < r} \left( \frac{2r}{p_r(w)} \cdot   m(p_r(w), f_w(\zeta))\right) dw.
	\end{align}
	Similarly, for $n(r,f)$ and $N(r,f)$, it follows from Lemma \ref{L-3.9} and Lemma \ref{stolllemma} that
	\begin{align}
		n(r,f) &= \frac{1}{\omega_n r^{n-1}} \int_{\mathbb{R}^n} n(r, f(x)) \delta(\|x\| - r) \, dx \nonumber\\
		&= \frac{1}{\omega_n r^{n-1}} \int_{\|w\| < r} \left( \frac{r}{p_r(w)} \cdot  \int_{\mathbb{R}}  n(r, f(w,\zeta)) \left( \delta\big(\|\zeta\| - p_r(w) \right) d\zeta \right) dw	\nonumber\\
		&= \frac{1}{\omega_n r^{n-1}} \int_{\|w\| < r} \left( \frac{2r}{p_r(w)} \cdot   n(p_r(w), f_w(\zeta))\right) dw
	\end{align} and
	\begin{align}
		N(r,f) &= \frac{1}{\omega_n r^{n-1}} \int_{\mathbb{R}^n} N (r, f(x)) \delta(\|x\| - r) \, dx \nonumber\\
		&= \frac{1}{\omega_n r^{n-1}} \int_{\|w\| < r} \left( \frac{r}{p_r(w)} \cdot  \int_{\mathbb{R}}  N(r, f(w,\zeta)) \left( \delta\big(\|\zeta\| - p_r(w) \right) d\zeta \right) dw	\nonumber\\
		&= \frac{1}{\omega_n r^{n-1}} \int_{\|w\| < r} \left( \frac{2r}{p_r(w)} \cdot   N(p_r(w), f_w(\zeta))\right) dw.
	\end{align}
	So, we have
	\begin{align}\label{Tdlk}
		T(r,f) = \frac{1}{\omega_n r^{n-1}} \int_{\|w\| < r} \left( \frac{2r}{p_r(w)} \cdot   T(p_r(w), f_w(\zeta))\right) dw,
	\end{align}
	
	The following Borel lemma was obtained by Cao and Zheng.\par
	
	\begin{lemma}\label{newloga}\cite{hypersurfaces}
		Let \( T(r) \) be a nondecreasing positive, convex, continuous function on \([1, +\infty)\) with
		\[
		\liminf_{r \to \infty} \frac{\log T(r)}{r} = 0.
		\]
		Then for the function
		\[
		\phi(r) := \max_{1 \leq t \leq r} \left\{ \left( \frac{t}{\log T(t)} \right)^\delta \right\}, \quad \delta \in \left( 0, \frac{1}{2} \right),
		\]
		we have
		\[
		T(r) \leq T(r + \phi(r)) \leq (1 + \varepsilon(r))T(r),
		\]
		where \( \varepsilon(r) \to 0 \) as \( r \) tends to infinity outside of a set of zero lower density measure \( E \), i.e.,
		\[
		\underline{\text{dens}} E = \liminf_{r \to \infty} \frac{1}{r} \int_{E \cap [1, r]} dt = 0.
		\]
		Especially, for any fixed positive real value \( c(\neq 0) \),
		\[
		T(r) \leq T(r + c) \leq (1 + \varepsilon(r))T(r), \; r \notin E \to \infty.
		\]		Furthermore, if the growth assumption is changed into
		\begin{equation*}\label{equ1}\limsup_{r\rightarrow\infty} \frac{\log T(r)}{r}=0,\end{equation*} then
		the exceptional set $E$ is a set with zero upper density measure,
		i.e.,
		$$\overline{dens}E=\limsup_{r\rightarrow\infty}\frac{1}{r}\int_{E\cap[1,r]}dt=0.$$
	\end{lemma}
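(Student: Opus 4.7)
The plan is to argue by contradiction. For each fixed $\varepsilon_0>0$ set
$$E_{\varepsilon_0} := \{r \geq 1 : T(r+\phi(r)) > (1+\varepsilon_0)T(r)\},$$
so it suffices to prove $\underline{\text{dens}}\,E_{\varepsilon_0}=0$ for every $\varepsilon_0>0$; the stated conclusion that $\varepsilon(r)\to 0$ outside a zero-lower-density set then follows by a diagonal argument over $\varepsilon_0\in\{1/n\}_{n\geq 1}$.

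First I build a greedy disjoint packing. Set $r_1:=\min E_{\varepsilon_0}$ and inductively $r_{k+1}:=\min\{r\in E_{\varepsilon_0}:r>r_k+\phi(r_k)\}$. The intervals $[r_k,r_k+\phi(r_k)]$ are pairwise disjoint, and by the minimality in the construction every element of $E_{\varepsilon_0}\cap[1,R]$ lies in one of the first $N$ such intervals, where $N$ counts the indices $k$ with $r_k\leq R$. Since $\phi$ is nondecreasing, this gives $|E_{\varepsilon_0}\cap[1,R]|\leq N\phi(R)$. On the other hand, chaining $T(r_{k+1})\geq T(r_k+\phi(r_k))\geq(1+\varepsilon_0)T(r_k)$ together with $r_N\leq R$ produces $(1+\varepsilon_0)^{N-1}T(r_1)\leq T(R)$, hence $N\leq C\log T(R)+1$ for $C=1/\log(1+\varepsilon_0)$, and therefore
$$\frac{|E_{\varepsilon_0}\cap[1,R]|}{R}\leq\frac{C'\,\phi(R)\log T(R)}{R}.$$

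The final step is to find a sequence $R_n\to\infty$ along which the right-hand side vanishes. I would choose $R_n$ to be the successive values of $R$ at which the running minimum $m(R):=\min_{1\leq t\leq R}\log T(t)/t$ strictly decreases; the hypothesis $\liminf_{r\to\infty}\log T(r)/r=0$ forces $m(R_n)=\log T(R_n)/R_n\to 0$. The crucial observation is that at such $R_n$ the running minimum is attained precisely at $t=R_n$, equivalently the map $t\mapsto t/\log T(t)$ is maximized on $[1,R_n]$ at $t=R_n$, so $\phi(R_n)=(R_n/\log T(R_n))^{\delta}$ and substitution yields
$$\frac{|E_{\varepsilon_0}\cap[1,R_n]|}{R_n}\leq C'\Bigl(\frac{\log T(R_n)}{R_n}\Bigr)^{1-\delta}\longrightarrow 0,$$
since $\delta\in(0,1/2)\subset(0,1)$. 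This proves $\underline{\text{dens}}\,E_{\varepsilon_0}=0$. Under the strengthened hypothesis $\limsup_{r\to\infty}\log T(r)/r=0$, the identity $\phi(R)=(R/\log T(R))^{\delta}$ eventually holds for \emph{every} large $R$, so the same estimate applies along the full sequence and upgrades the conclusion to $\overline{\text{dens}}\,E_{\varepsilon_0}=0$. The assertion for any fixed $c\neq 0$ is then immediate: subnormal growth forces $\phi(r)\to\infty$, so eventually $|c|\leq\phi(r)$ and monotonicity of $T$ gives $T(r+c)\leq T(r+\phi(r))$.

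The principal obstacle I anticipate is controlling the maximum in the definition of $\phi(R)$: a priori one has only $\phi(R)\geq(R/\log T(R))^{\delta}$ together with a crude upper bound of order $R^{\delta}$, and a direct substitution of either into the packing estimate is too weak to produce a contradiction. Extracting $R_n$ from the running minimum of $\log T(t)/t$ is precisely the device that pins the argmax of $t/\log T(t)$ at $t=R_n$ and converts the greedy packing bound into the desired density statement; making this extraction quantitative, and verifying that the running-minimum argument extends uniformly to the $\limsup$ case so that the exceptional set itself (not merely each $E_{\varepsilon_0}$) has zero upper density, is the technical heart of the argument.
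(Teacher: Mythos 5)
This lemma is quoted from \cite{hypersurfaces} and the paper gives no proof of it, so your argument has to stand on its own. The first half of your proposal is sound: the greedy packing with the chaining bound $N(R)\leq C\log T(R)+O(1)$, and above all the device of evaluating the density ratio at the record lows of $t\mapsto \log T(t)/t$ (which exist and are unbounded because the running minimum is positive yet tends to $0$, and at which the maximum defining $\phi$ is pinned at the right endpoint) correctly yields $\underline{\mathrm{dens}}\,E_{\varepsilon_0}=0$; the diagonal over $\varepsilon_0=1/n$ also goes through since the sets $E_{1/n}$ are nested increasing, provided you choose the cut points $R_{N+1}$ at radii where $E_{1/N}$ has small relative measure. (Minor repairs: $E_{\varepsilon_0}$ is open, so replace ``$\min$'' by ``$\inf$'' and use continuity of $T$ and $\phi$ to keep the non-strict inequality in the chaining; and the fixed-$c$ statement indeed follows since $\phi(r)\to\infty$.)

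The upper-density half, however, has a genuine gap. The assertion that under $\limsup_{r\to\infty}\log T(r)/r=0$ the identity $\phi(R)=(R/\log T(R))^{\delta}$ ``eventually holds for every large $R$'' is false: $\log T(r)/r\to 0$ does not force $R$ to be a running minimum of $t\mapsto\log T(t)/t$ for all large $R$, because $h(t):=t/\log T(t)$ may have earlier spikes dominating $h(R)$. Worse, the quantity your packing bound requires to vanish, namely $\phi(R)\log T(R)/R=H(R)^{\delta}/h(R)$ with $H(R):=\max_{t\le R}h(t)$, can genuinely blow up along a subsequence for convex, nondecreasing, continuous $T$ with $\log T(r)=o(r)$: let $\log T$ climb from about $R_j^{a}$ to about $R_j^{b}$ (with $a<b<1$) on $[R_j/2,R_j]$, take $R_{j+1}=R_j^{k}$ with $a=b/k$, and join the spikes by affine pieces of $T$ (one checks $T'$ stays nondecreasing across the junctions). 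Then $H(R_j)\approx R_j^{\,1-a}$ while $h(R_j)\approx R_j^{\,1-b}$, so $H(R_j)^{\delta}/h(R_j)\to\infty$ as soon as $\delta>(1-b)/(1-a)$, and by taking $b$ close to $1$ this threshold can be pushed below any prescribed $\delta\in(0,1/2)$. Hence the packing estimate alone cannot deliver $\overline{\mathrm{dens}}\,E_{\varepsilon_0}=0$; note that in this construction $\log T(r+\phi(r))-\log T(r)\approx R_j^{\,b-1}\phi(r)\to\infty$ on most of $[R_j/2,R_j]$, so a positive proportion of that interval actually lies in $E_{\varepsilon_0}$, which indicates that the $r$-dependent increment $\phi(r)$ behaves fundamentally differently from a fixed increment $c$ (for which your packing does give $|E\cap[1,R]|\leq cN(R)=O(\log T(R))=o(R)$ and hence zero upper density). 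The upper-density claim for the $\phi$-perturbation therefore needs a genuinely different argument, and you should consult the precise formulation and proof in \cite{hypersurfaces} rather than upgrade the lower-density argument by fiat.
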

	
	Now we extend the tropical version of the logarithmic derivative lemma for difference operator $x+c$ under the subnormal growth (or called minimal hypertype) \cite{hypersurfaces} from one variable to several variables.\par
	
	\begin{theorem}\label{loga}
		Let \( c \in \mathbb{R}^n \setminus \{0\} \). If \( f \) is a tropical meromorphic function \(f:\mathbb{R}^n \rightarrow \mathbb{T} \) with
		
		\begin{align}\label{loga*}
			\limsup_{r \to \infty} \frac{\log T(r, f)}{r} = 0,
		\end{align}
		then
		\[
		m(r, f(x + c) \oslash f(x)) = o(T(r, f)),
		\]
		where \( r \) runs to infinity outside of a set of zero upper density measure \( E \), i.e.,
		\[
		\overline{\text{dens}} E = \limsup_{r \to \infty} \frac{1}{r} \int_{E \cap [1, r]} dt = 0.
		\]
	\end{theorem}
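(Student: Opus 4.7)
The plan is to reduce the higher-dimensional statement to the one-variable tropical logarithmic derivative lemma (Lemma \ref{one varialbe ldl}) via the sphere-to-slab decomposition of Lemma \ref{stolllemma}, and then to close the estimate using the Cao--Zheng Borel-type lemma (Lemma \ref{newloga}) under the subnormal growth hypothesis. First I exploit the rotational invariance of $m$, $N$, $T$ to assume without loss of generality that $c=|c|e_n$. Writing $x=(w,\zeta)\in\mathbb{R}^{n-1}\times\mathbb{R}$ and $g_w(\zeta):=f(w,\zeta)$, the shift by $c$ acts only in the $\zeta$-coordinate, so the vertical slice of $h(x):=f(x+c)\oslash f(x)$ through $w$ is exactly $h_w(\zeta)=g_w(\zeta+|c|)\oslash g_w(\zeta)$. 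Formula \eqref{E10} (a direct consequence of Lemma \ref{stolllemma}) then yields
\begin{equation*}
m(r,h)=\frac{1}{\omega_n r^{n-1}}\int_{\|w\|<r}\frac{2r}{p_r(w)}\, m_1\bigl(p_r(w),h_w\bigr)\, dw,
\end{equation*}
where $p_r(w)=\sqrt{r^2-\|w\|^2}$ and $m_1$ denotes the one-variable proximity function.

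Next I apply Lemma \ref{one varialbe ldl} slicewise to each $g_w$ at radius $p_r(w)$, with an auxiliary parameter $\alpha>1$, to obtain
\begin{equation*}
m_1\bigl(p_r(w),h_w\bigr)\le\frac{16|c|}{(p_r(w)+|c|)(\alpha-1)}\, T_1\bigl(\alpha(p_r(w)+|c|),g_w\bigr)+\frac{|g_w(0)|}{2}.
\end{equation*}
The elementary inequality $p_{r+|c|}(w)\ge p_r(w)+|c|$ (verified by squaring and using $p_r(w)\le r$) together with monotonicity of $T_1$ permits replacing $T_1(\alpha(p_r(w)+|c|),g_w)$ by $T_1(p_{\alpha(r+|c|)}(w),g_w)$. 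Substituting back into the slicing formula and comparing the resulting weighted integral against the identity \eqref{Tdlk} for $T(\alpha(r+|c|),f)$ --- the mismatch between the weight $\tfrac{1}{p_r(w)(p_r(w)+|c|)}$ produced by the slicewise LDL and the weight $\tfrac{1}{p_R(w)}$ natural to $T(R,f)$ is singular only as $\|w\|\to r$ and is controlled by the polar substitution $\|w\|=r\sin\phi$ --- yields a bound of the form
\begin{equation*}
m(r,h)\le \frac{\psi(r)}{\alpha-1}\, T\bigl(\alpha(r+|c|),f\bigr)+E(r),
\end{equation*}
with $\psi(r)\to 0$ as $r\to\infty$; the remainder $E(r)$ collects the $|f(w,0)|$ contributions and is itself $o(T(r,f))$ via a Jensen-type estimate derived from \eqref{Jensen-1} applied on the slice $\zeta=0$.

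Finally, the subnormal growth assumption \eqref{loga*} and Lemma \ref{newloga} produce an exceptional set $E\subset[1,\infty)$ of zero upper density outside of which $T(r+\phi(r),f)\le(1+\varepsilon(r))T(r,f)$ with $\phi(r)\to\infty$ and $\varepsilon(r)\to0$. Choosing $\alpha=\alpha(r)\to1^+$ so that $\alpha(r+|c|)\le r+\phi(r)$ while simultaneously $\psi(r)/(\alpha-1)\to0$ (balanced, for instance, by $\alpha-1:=\sqrt{\psi(r)}$ paired with a matching growth-shift choice) delivers $m(r,h)=o(T(r,f))$ for $r\notin E$, as desired. The main difficulty is the middle paragraph: bridging the sphere-slicing weight $\tfrac{1}{p_r(w)(p_r(w)+|c|)}$ to the natural $T(R,f)$-weight $\tfrac{1}{p_R(w)}$ is delicate, because a pointwise comparison of these two weights blows up as $\|w\|\to r$, and one must instead rely on the integrated polar estimate together with the flexibility built into the auxiliary parameter $\alpha$ --- this is precisely the higher-dimensional analogue of the most subtle step in the one-variable proof.
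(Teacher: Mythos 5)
Your overall architecture is exactly the paper's: reduce to one variable through the Dirac-function slicing of Lemma \ref{stolllemma} (i.e.\ formula \eqref{E10}), apply Lemma \ref{one varialbe ldl} on each slice at radius $p_r(w)$, pass from $\alpha(p_r(w)+|c|)$ to $p_{\alpha(r+|c|)}(w)$ by the squaring inequality, compare with \eqref{Tdlk}, and finish with Lemma \ref{newloga}. Your reduction of a general $c$ to $c=|c|e_n$ by a rotation is legitimate (all of $m$, $N$, $T$ are rotation-invariant, being spherical averages of the corresponding one-variable quantities along slices) and is actually cleaner than the paper's device of decomposing $c$ into coordinate shifts and telescoping, which forces the extra step $T(r,f(x+c_\zeta))=T(r,f)+o(T(r,f))$.

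The genuine gap is precisely at the step you flag as the main difficulty, and your proposed mechanism for it does not close. You claim a bound $m(r,h)\le \frac{\psi(r)}{\alpha-1}T(\alpha(r+|c|),f)+E(r)$ with $\psi(r)\to0$, obtained from ``the integrated polar estimate together with the flexibility built into $\alpha$.'' But the integrand carries the $w$-dependent factor $T_1\bigl(p_{\alpha(r+|c|)}(w),g_w\bigr)$, for which only an \emph{average} bound (namely \eqref{Tdlk}) is available, not a sup bound; so the finiteness of $\int_{\|w\|<r}p_r(w)^{-1}\,dw$ (the Beta-function computation) cannot be decoupled from it, and the pointwise ratio of your weight $r/(p_r(w)(p_r(w)+|c|))$ to the weight $R/p_R(w)$ in \eqref{Tdlk} blows up as $\|w\|\to r$. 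The parameter $\alpha$ alone cannot repair this. What the paper actually does is introduce a \emph{second} parameter $\varepsilon$, restricting the integral to $\|w\|\le(1-\varepsilon)r$ where $p_r(w)\ge r\sqrt{2\varepsilon-\varepsilon^2}$, which makes the weight ratio uniformly of size $\frac{1}{r(2\varepsilon-\varepsilon^2)}$, and then couples $\varepsilon$ to $\alpha$ via $\frac{1}{2\varepsilon-\varepsilon^2}=\sqrt{(\alpha-1)(r+|c|)}$ so that $\frac{1}{(\alpha-1)r(2\varepsilon-\varepsilon^2)}\to0$ under \eqref{loga*}; this truncation (and the attendant boundary annulus $(1-\varepsilon)r<\|w\|<r$) is absent from your outline, so the asserted $\psi(r)\to0$ is unsubstantiated. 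A secondary soft spot, shared with the paper, is the remainder $E(r)$: the slicewise constant is $|f(w,0)|/2$, which is not uniformly bounded in $w$ (it can grow linearly in $\|w\|$), and your one-line appeal to a Jensen-type estimate on the slice $\zeta=0$ at best yields $O(T(r,f))$ rather than the needed $o(T(r,f))$; this term requires a genuine argument in either write-up.
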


	\begin{proof}	Set $x = (w,\zeta) \in \mathbb{R}^{n-1} \times \mathbb{R},$  and $\|x\|^2=\|w\|^2 + |\zeta|^2.$  Using Lemma \ref{stolllemma} (or \eqref{E10}), we have
		\begin{align*}
			m(r,f)= \frac{1}{\omega_n r^{n-1}} \int_{\|w\| < r} \left( \frac{2r}{p_r(w)} \cdot   m(p_r(w), f_w(\zeta))\right) dw,
		\end{align*}where $p_r(w) := \sqrt{r^2 - \|w\|^2}>0.$
		Then for the difference \( f(x + c) - f({x}) \), with \( c_\zeta=(0, \ldots, \tilde{c_\zeta},\ldots,0) \in \mathbb{R}^n \), we have
		\begin{align*}
			m(r, f(x + c_\zeta) - f({x}))	&=\frac{1}{\omega_n r^{n-1}} \int_{\|w\| < r}
			\frac{2r}{p_r(w)}m\left( p_r(w), \, f_{[w]}(\zeta +\tilde{c_\zeta}) - f_{[w]}(\zeta) \right) dw,
		\end{align*} where $ c_\zeta$ is the element of $c$ in terms of $\zeta.$\par
		
		By the estimation on tropical logarithmic derivative in one variable (Lemma \ref{one varialbe ldl},
		for all $\alpha(>1)$ and all $p_{r}(w)>0,$ we have
		\begin{align*}
			&\frac{2r}{p_r(w)}m\left( p_r(w), \, f_{[w]}(\zeta +\tilde{c_\zeta}) - f_{[w]}(\zeta) \right)\\\leq&\frac{32|\tilde{c_\zeta}|}{p_r(w)+|\tilde{c_\zeta}|}\frac{1}{\alpha-1}	\frac{r}{p_r(w)} T\left(\alpha( p_r(w)+|\tilde{c_\zeta}|),f_{[w]}\right)
			+\frac{r|f_{[w]}(0)|}{p_r(w)}.
		\end{align*} 	
		Hence we deduce that 
		\begin{eqnarray}\label{AAA}
			&&m(r, f(x + c_\zeta) - f({x}))\nonumber	\\
			&\leq&\frac{1}{\omega_n r^{n-1}} \int_{\|w\| < r}\frac{32|\tilde{c_\zeta}|}{p_r(w)+|\tilde{c_\zeta}|}\frac{1}{\alpha-1}	\frac{r}{p_r(w)} T\left(\alpha( p_r(w)+|\tilde{c_\zeta}|),f_{[w]}\right) dw\nonumber	\\
			&&+\frac{1}{\omega_n r^{n-1}} \int_{\|w\| < r}\frac{r M}{p_r(w)} dw,
		\end{eqnarray}
		where $M$ is a positive bounded value.\par

		Let $\mathcal A_\varepsilon:=\left\{w\in\mathbb{R}^{n-1}:   \left\|w\right\|\le\left(1-\varepsilon\right)r\,\right\}=\{w\in\mathbb{R}^{n-1}:  \|w\|<r\},$ where $\varepsilon\in\left(0,1\right)$. Define
		\begin{align}
			I_{1}
			:= \frac{1}{\omega_n r^{\,n-1}}
			\int_{\mathcal A_\varepsilon}
			\frac{32\left|\tilde c_\zeta\right|}{p_r\left(w\right)+\left|\tilde c_\zeta\right|}\cdot\frac{1}{\alpha-1}\cdot\frac{r}{p_r\left(w\right)}\,
			T\left(\alpha\left(p_r\left(w\right)+\left|\tilde c_\zeta\right|\right),\,f_{[w]}\right)\,dw .
			\label{np1.np2}
		\end{align}
		On $\mathcal A_\varepsilon$, we have $p_r\left(w\right)\ge r\sqrt{2\varepsilon-\varepsilon^2}$, which implies
		\begin{align}
			\frac{r}{p_r\left(w\right)}\le \frac{1}{\sqrt{2\varepsilon-\varepsilon^2}},
			\qquad
			\frac{\left|\tilde c_\zeta\right|}{p_r\left(w\right)+\left|\tilde c_\zeta\right|}
			\le \frac{\left|\tilde c_\zeta\right|}{r\sqrt{2\varepsilon-\varepsilon^2}} .
			\label{np1.np3}
		\end{align}
		Substituting \eqref{np1.np3} into \eqref{np1.np2} yields
		\begin{align}
			I_{1}
			\le \frac{32}{\alpha-1}\cdot\frac{\left|\tilde c_\zeta\right|}{r}\cdot\frac{1}{2\varepsilon-\varepsilon^2}\cdot
			\frac{1}{\omega_n r^{\,n-1}}
			\int_{\mathcal A_\varepsilon}
			T\left(\alpha\left(p_r\left(w\right)+\left|\tilde c_\zeta\right|\right),\,f_{[w]}\right)\,dw .
			\label{np1.np4}
		\end{align}\par
		
		Define $p_{\alpha\left(r+\left|\tilde c_\zeta\right|\right)}\left(w\right):=\sqrt{\alpha^2\left(r+\left|\tilde c_\zeta\right|\right)^2-\left\|w\right\|^2}.$ Then we have
		\begin{eqnarray*}&&\left(p_{\alpha\left(r+\left|\tilde c_\zeta\right|\right)}\left(w\right)\right)^2- \left(\alpha\big(p_r(w)+|\tilde c_\zeta|\big)\right)^2
			\\&=&\big[\,\alpha^2(r+|\tilde c_\zeta|)^2 - \|w\|^2\,\big]
			- \alpha^2\left[\,p_r(w)^2 + 2|\tilde c_\zeta|\,p_r(w) + |\tilde c_\zeta|^2\,\right] \\
			&=& (\alpha^2 - 1)\,\|w\|^2 \;+\; 2\alpha^2|\tilde c_\zeta|\left(r - \sqrt{r^2 - \|w\|^2}\right)\\ 
			&\ge& 0,
		\end{eqnarray*}
		and thus 
		\begin{align}
			p_{\alpha\left(r+\left|\tilde c_\zeta\right|\right)}\left(w\right)
			\ge \alpha\left(p_r\left(w\right)+\left|\tilde c_\zeta\right|\right).
			\label{np1.np5}
		\end{align}
		This gives	\begin{align}
			T\left(\alpha\left(p_r\left(w\right)+\left|\tilde c_\zeta\right|\right),\,f_{[w]}\right)
			\le T\left(p_{\alpha\left(r+\left|\tilde c_\zeta\right|\right)}\left(w\right),\,f_{[w]}\right).
			\label{np1.np6}
		\end{align}
		Further, since $p_{\alpha\left(r+\left|\tilde c_\zeta\right|\right)}\left(w\right)\le \alpha\left(r+\left|\tilde c_\zeta\right|\right)$, we obtain
		\begin{align}
			T\left(p_{\alpha\left(r+\left|\tilde c_\zeta\right|\right)}\left(w\right),\,f_{[w]}\right)
			\le \frac{1}{2}\cdot\frac{2\alpha\left(r+\left|\tilde c_\zeta\right|\right)}{p_{\alpha\left(r+\left|\tilde c_\zeta\right|\right)}\left(w\right)}\,
			T\left(p_{\alpha\left(r+\left|\tilde c_\zeta\right|\right)}\left(w\right),\,f_{[w]}\right).
			\label{np1.np7}
		\end{align}\par
		
		Therefore, combining \eqref{np1.np4}-\eqref{np1.np7} with \eqref{Tdlk}, we have
		\begin{align}
			I_{1}
			\le \frac{16}{\alpha-1}\cdot\frac{\left|\tilde c_\zeta\right|}{r}\cdot\frac{1}{2\varepsilon-\varepsilon^2}\cdot
			\left(\frac{\alpha\left(r+\left|\tilde c_\zeta\right|\right)}{r}\right)^{n-1}\,
			T\left(\alpha\left(r+\left|\tilde c_\zeta\right|\right),\,f\right).
			\label{np1.np8}
		\end{align}
		
		Let $\delta\in\left(0,\tfrac12\right)$ and assume
		\begin{align}
			\alpha=\alpha\left(r\right)
			:= 1+\frac{\left(r+\left|\tilde c_\zeta\right|\right)^{\delta-1}}{\left(\log T\left(r+\left|\tilde c_\zeta\right|,f\right)\right)^{\delta}},
			\label{np1.np9}
		\end{align}
		Since \eqref{loga*},  by Lemma \ref{newloga}, there exist a set $E$ of upper density zero and a function $\eta\left(r\right)\to 0$ such that, for all $r\notin E$,
		\begin{align}
			T\left(\alpha\left(r+\left|\tilde c_\zeta\right|\right),\,f\right)
			&\le \left(1+\eta\left(r\right)\right)\,T\left(r+\left|\tilde c_\zeta\right|,f\right)\nonumber\\
			&\le \left(1+\eta\left(r\right)\right)\left(1+\eta_c\left(r\right)\right)\,T\left(r,f\right),
			\label{np1.np10}
		\end{align}
		If we choose
		\[
		\varepsilon:= 1 - \sqrt{1 - \sqrt{\frac{1}{(\alpha-1)(r+\left|\tilde c_\zeta\right|)}}}\to 0,
		\]
		then we have
		\begin{align*}
			\frac{1}{2\varepsilon-\varepsilon^2}=\sqrt{(\alpha-1)(r+\left|\tilde c_\zeta\right|)}.
		\end{align*}
		Additionally, we get
		\begin{align}
			\frac{1}{(\alpha-1)r}&=\frac{1}{(\alpha-1)(r+\left|\tilde c_\zeta\right|)}\frac{(r+\left|\tilde c_\zeta\right|)}{r} \nonumber\\
			&=\frac{\left(\log T\left(r+\left|\tilde c_\zeta\right|,f\right)\right)^{\delta}}{\left(r+\left|\tilde c_\zeta\right|\right)^{\delta}}\frac{(r+\left|\tilde c_\zeta\right|)}{r} .\label{np1.np13}
		\end{align}
		Note the assumption \eqref{loga*}, it follows that 
		\begin{align}
			\frac{1}{2\varepsilon-\varepsilon^2}	\frac{1}{(\alpha-1)r}=\frac{\left(\log T\left(r+\left|\tilde c_\zeta\right|,f\right)\right)^{\frac{\delta}{2}}}{\left(r+\left|\tilde c_\zeta\right|\right)^{\frac{\delta}{2}}}\frac{(r+\left|\tilde c_\zeta\right|)}{r} \to 0
		\end{align}
		and
		\begin{align}
			\left(\frac{\alpha\left(r+\left|\tilde c_\zeta\right|\right)}{r}\right)^{n-1}
			&=	\left( \alpha \right)^{n-1}  	\left(\frac{r+\left|\tilde c_\zeta\right|}{r}\right)^{n-1} \to 1+o\left(1\right).
			\label{np1.np11}
		\end{align}	\par

		Now, from \eqref{np1.np8}-\eqref{np1.np11}, we get 
		\begin{equation}\label{BBB} I_{1}=o\left(T\left(r,f\right)\right),\end{equation} where \( r \) runs to infinity outside of a set of zero upper density measure \( E \).

		Set
		\begin{align}
			I_{2}(r) := \frac{M}{\omega_n r^{n-1}} \int_{\|w\| < r} \frac{r}{p_r(w)} \, dw = \frac{M}{\omega_n r^{n-1}} \int_{\|w\| < r} \frac{r}{\sqrt{r^2 - \|w\|^2}} \, dw. \label{nppp1}
		\end{align}
		In polar coordinates, let \( w = t \xi \), with \( t \in [0, r] \), \( \xi \in S^{n-2} \), and thus \( dw = t^{n-2} \, dt \, d\sigma_{n-2}(\xi) \). This leads to the expression
		\begin{align}
			\int_{\|w\| < r} \frac{r}{\sqrt{r^2 - \|w\|^2}} \, dw &= \int_{S^{n-2}} \int_0^r \frac{r}{\sqrt{r^2 - t^2}} t^{n-2} \, dt \, d\sigma_{n-2}(\xi) \notag \\
			&= |S^{n-2}(1)| \int_0^r \frac{r t^{n-2}}{\sqrt{r^2 - t^2}} \, dt, \label{nppp2}
		\end{align}
		where \( |S^{n-2}(1)| \) represents the surface area of the unit \( (n-2) \)-sphere, which is the high-dimensional unit sphere surface. Next, we use the substitution \( t = r u \), so that \( dt = r \, du \). This leads to
		\begin{align}
			\int_{\|w\| < r} \frac{r}{\sqrt{r^2 - \|w\|^2}} \, dw &= |S^{n-2}(1)| r^{n-1} \int_0^1 \frac{u^{n-2}}{\sqrt{1 - u^2}} \, du. \label{nppp3}
		\end{align}
		We now recognize the right integral above as a standard form, and with the substitution \( s = u^2 \), \( ds = 2u \, du \), it becomes
		\begin{align}
			\int_0^1 \frac{u^{n-2}}{\sqrt{1 - u^2}} \, du &= \frac{1}{2} \int_0^1 s^{\frac{n-3}{2}} (1 - s)^{-1/2} \, ds \notag \\
			&= \frac{1}{2} B\left( \frac{n-1}{2}, \frac{1}{2} \right), \label{nppp4}
		\end{align}
		where \( B(a, b) \) is the Beta function, which is defined as
		\[
		B(a, b) = \int_0^1 s^{a-1} (1 - s)^{b-1} \, ds.
		\]
		Substituting \eqref{nppp3} and \eqref{nppp4} into \eqref{nppp2} yields
		\begin{align}
			I_{2}(r) = \frac{M}{\omega_n} \frac{|S^{n-2}(1)|}{2} B\left( \frac{n-1}{2}, \frac{1}{2} \right)=O(1), \label{nppp5}
		\end{align}
		in which $B\left( \frac{n-1}{2}, \frac{1}{2} \right)$ is a positive constant. \par
		
		Hence, it follows from \eqref{AAA}, \eqref{BBB} and \eqref{nppp5} that	
        \begin{equation} \label{E-17} m(r, f(x + c_\zeta) - f({x}))=I_{1}+I_{2}= o(T(r, f))\end{equation} 
        where \( r \) runs to infinity outside of a set of zero upper density measure \( E \).\par
		
		In the general case, any \(c \in \mathbb{R}^n\) can be written as \(c = \sum_{\zeta =0}^n c_{\zeta }\) where \( c_{0 } := (0, \cdots, 0)\). Therefore, we have
		\begin{align}\label{eq18}
			m(r, f(x+c) - f(x))  &= 	m\left(r,  \sum_{k=1}^n  f(x + \sum_{\zeta=0}^k c_{\zeta })-f(x + \sum_{\zeta=0}^{k-1} c_{\zeta }) )\right)\nonumber\\
			&	\leq \sum_{k=1}^n 	m\left(r,  ( f(x + \sum_{\zeta=0}^k c_{\zeta })-f(x + \sum_{\zeta=0}^{k-1} c_{\zeta}) )\right)\nonumber\\
			&= \sum_{k=1}^n o\left(T(r, f(x + \sum_{\zeta=0}^{k-1}  c_{\zeta })) \right),
		\end{align} for all \(r \notin E\).
		It follows from \eqref{E-17} that
		\begin{align}\label{TTo}
			\quad T(r, f(x +  c_{\zeta })) &= m(r, f(x +  c_{\zeta})) + N(r, f(x +  c_{\zeta })) \nonumber \\
			&\leq m\left(r, f(x +  c_{\zeta })-f(x)\right) + m(r, f) + N(r, f) \nonumber \\
			&= T(r, f) + o(T(r, f))
		\end{align} for all \( r \notin E \). Since \( c = \sum_{\zeta=0}^n c_{\zeta } \), it follows by repeated application of \eqref{TTo} that
		\begin{align*}
			T(r, f(x + c)) = T(r, f) + o(T(r, f))
		\end{align*}
		where \( r \notin E \).
		Thus by \eqref{eq18},
		\begin{align*}
			m(r, f(x+c) - f(x))= o(T(r, f))
		\end{align*}
		where \( r \) runs to infinity outside of a set of zero upper density measure \( E \).		
	\end{proof}	
	
Next, we obtain a tropical version of the logarithmic derivative lemma for the shift operator $qx$ under growth of zero order in several variables.\par
	
	\begin{lemma}\label{order}{\cite[Lemma B]{qdiflemma}}
		If $ T:\mathbb{R}^{+}\rightarrow\mathbb{R}^{+} $ is an increasing function such that
		\[
		\limsup_{r\rightarrow\infty}\frac{\log T(r)}{\log r}=0,
		\]
		then the set
		\[
		E:=\left\{r : T(C_{1}r) \geq C_{2}T(r)\right\}
		\]
		has logarithmic density 0, that is, \[
		\lim_{r \to \infty} \frac{1}{\log r} \int_{E \cap [1, r]} \frac{dt}{t}=0
		\] for all $ C_{1}>1 $ and $ C_{2}>1 $.
	\end{lemma}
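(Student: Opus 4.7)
The plan is to argue by contraposition: I will show that if the set $E$ has positive upper logarithmic density, then the order $\limsup_{r\to\infty}\log T(r)/\log r$ is strictly positive, contradicting the hypothesis on $T$. The central idea is to discretize at the geometric grid $r = C_1^k$ and convert each continuous excursion $r\in E$ into a multiplicative jump on the sequence $u_k := T(C_1^k)$.

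First I would define $A := \{k\in\mathbb{N} : [C_1^k, C_1^{k+1}] \cap E \neq \varnothing\}$. For each $k\in A$, pick $r_k$ in the intersection; then $T(C_1 r_k) \geq C_2 T(r_k)$ by definition of $E$, and the monotonicity of $T$ upgrades this to
\[ u_{k+2} = T(C_1^{k+2}) \geq T(C_1 r_k) \geq C_2 T(r_k) \geq C_2 T(C_1^k) = C_2 u_k. \]
For $k\notin A$ the trivial bound $u_{k+2}\geq u_k$ still holds by monotonicity, so the effect of $E$ is faithfully recorded as a multiplicative boost along a shift-by-two recursion on the $u_k$.

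Next I would compare the continuous logarithmic measure of $E$ with the discrete count of $A$. Since $E\cap[1, C_1^N]$ is covered by $\bigcup_{k\in A\cap[0,N]}[C_1^k, C_1^{k+1}]$, one has
\[ \int_{E\cap[1,C_1^N]} \frac{dt}{t} \;\leq\; |A\cap[0,N]|\,\log C_1. \]
If $E$ had positive upper logarithmic density $\delta>0$, then along some sequence $N_j\to\infty$ we would deduce $|A\cap[0,N_j]|\geq (\delta/2)N_j$. Splitting $A$ into its even and odd parts (one of them containing at least half of the mass) and iterating the recursion $u_{k+2}\geq C_2 u_k$ inside the denser parity class, while using $u_{k+2}\geq u_k$ to bridge the gaps where the recursion fails, would yield
\[ \log T(C_1^{N_j}) \;\geq\; \tfrac{\delta}{4} N_j \log C_2 + O(1). \]
Dividing by $\log(C_1^{N_j}) = N_j \log C_1$ gives $\log T(R)/\log R \geq (\delta\log C_2)/(4\log C_1) > 0$ along $R = C_1^{N_j}$, contradicting the zero-order hypothesis.

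The main delicacy I expect is the bookkeeping in iterating the shift-by-two recursion $u_{k+2}\geq C_2 u_k$, since the indices belonging to $A$ need not be consecutive; the parity-splitting device combined with the trivial monotonicity $u_{k+2}\geq u_k$ is what lets one extract a positive growth exponent from the mere density of $A$. Everything else in the argument is a routine transfer between the integral defining logarithmic density and the counting measure on the geometric grid $\{C_1^k\}_{k\geq 0}$, together with the observation that replacing $\lim = 0$ by $\limsup > 0$ is equivalent to the existence of the subsequence $N_j$ used above.
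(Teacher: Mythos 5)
The paper does not actually prove this lemma: it is quoted verbatim from \cite[Lemma B]{qdiflemma} (Barnett--Halburd--Korhonen--Morgan), so there is no internal proof to compare against. Your argument is a correct, self-contained proof. The key chain checks out: for $r_k\in[C_1^{k},C_1^{k+1}]\cap E$ one has $C_1 r_k\le C_1^{k+2}$ and $r_k\ge C_1^{k}$, so monotonicity gives $u_{k+2}\ge T(C_1r_k)\ge C_2T(r_k)\ge C_2u_k$; and within a fixed parity class of $A$ consecutive indices differ by at least $2$, so $u_{k_{i+1}}\ge u_{k_i+2}\ge C_2u_{k_i}$ iterates cleanly to $u_{k_m}\ge C_2^{\,m-1}u_{k_1}\ge C_2^{\,m-1}T(1)$, which after dividing by $N_j\log C_1$ contradicts zero order. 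The covering bound $\int_{E\cap[1,C_1^N]}dt/t\le |A\cap[0,N]|\log C_1$ and the passage from $\limsup>0$ to the subsequence $N_j$ are also fine. The cited source runs the same idea but chains along a maximal $C_1$-separated subsequence of $E$ itself rather than a fixed geometric grid, which avoids the shift-by-two and hence the parity-splitting device; your grid version costs an extra factor of $2$ in the exponent (harmless, since any positive exponent suffices for the contradiction) but makes the comparison with the logarithmic measure of $E$ slightly more transparent.
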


	\begin{lemma}\label{qdiflemma}{\cite[Lemma 5.4]{qdiflemma}}
		Let \( T: \mathbb{R}^{+} \to \mathbb{R}^{+} \) be an increasing function and let \( U: \mathbb{R}^{+} \to \mathbb{R}^{+} \).
		If there exists a decreasing sequence \( \{c_{n}\}_{n\in\mathbb{N}} \) such that
		\[
		c_{n} \to 0 \quad \text{as} \quad n \to \infty
		\]
		and, for all \( n \in \mathbb{N} \), the set
		\[
		F_{n} := \{ r \geq 1 : U(r) < c_{n} T(r) \}
		\]
		has logarithmic density 1, then
		\[
		U(r) = o(T(r))
		\]
		on a set of logarithmic density $1.$
	\end{lemma}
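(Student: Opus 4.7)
The plan is to extract, by a standard diagonal argument, a single set $E\subset[1,\infty)$ of logarithmic density $1$ along which $U(r)/T(r)\to 0$. The hypothesis already supplies, for every index $n$, a level set $F_n$ on which $U(r)<c_nT(r)$ and whose complement has logarithmic density $0$. What I need to do is glue together infinitely many such level sets in a way that (i) the small parameter $c_n$ is forced to go to zero as $r\to\infty$ through the glued set, and (ii) the logarithmic density of the glued set is still $1$.

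First I would fix a summable auxiliary sequence $\varepsilon_n\downarrow 0$ (say $\varepsilon_n=2^{-n}$). For each $n$, logarithmic density $1$ of $F_n$ supplies a threshold $R_n^{\ast}$ such that
\[
\int_{F_n^c\cap[1,s]}\frac{dt}{t}\le\varepsilon_n\log s\qquad(\forall\,s\ge R_n^{\ast}).
\]
Then I build an increasing sequence $R_1<R_2<\cdots\to\infty$ inductively by demanding $R_{n+1}\ge R_n^{\ast}$ \emph{and} $R_{n+1}$ so large that the total logarithmic measure accumulated from all previous exceptional blocks, call it $A_{n-1}$, satisfies $A_{n-1}\le\varepsilon_n\log R_{n+1}$. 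Finally, define
\[
E:=\bigcup_{n\ge 1}\bigl(F_n\cap[R_n,R_{n+1})\bigr).
\]
If $r\in E$ lies in $[R_N,R_{N+1})$, then automatically $r\in F_N$, hence $U(r)<c_NT(r)$; since $r\to\infty$ forces $N\to\infty$ and therefore $c_N\to 0$, we obtain $U(r)=o(T(r))$ along $E$.

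It remains to check that $E$ itself has logarithmic density $1$. For $r\in[R_N,R_{N+1})$ I split
\[
\int_{E^c\cap[1,r]}\frac{dt}{t}\le\Bigl(\log R_1+\sum_{n=1}^{N-1}\int_{F_n^c\cap[R_n,R_{n+1})}\frac{dt}{t}\Bigr)+\int_{F_N^c\cap[R_N,r]}\frac{dt}{t}.
\]
By the choice of $R_N^{\ast}$ the last term is at most $\varepsilon_N\log r$, and by the inductive choice of the $R_n$'s the parenthesised quantity is $A_{N-1}\le\varepsilon_{N-1}\log R_N\le\varepsilon_{N-1}\log r$. Dividing by $\log r$ and letting $r\to\infty$ (so that $N\to\infty$) gives logarithmic density $0$ for $E^c$, whence logarithmic density $1$ for $E$.

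The only real obstacle is the bookkeeping in the inductive construction of the $R_n$: one must choose each $R_{n+1}$ large enough to absorb \emph{all} the error accumulated from the preceding $n-1$ exceptional blocks into a single $\varepsilon_n\log R_{n+1}$ term, so that the cumulative exceptional logarithmic measure remains $o(\log r)$ rather than piling up linearly in $\log r$. Once this telescoping choice is made, every other step is routine.
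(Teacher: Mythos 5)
Your construction is the standard diagonal/telescoping argument for this kind of statement, and it is correct in substance; note that the paper itself gives no proof of this lemma (it is quoted verbatim from Barnett--Halburd--Korhonen--Morgan, Lemma 5.4), so there is nothing to compare against except the standard argument, which you have essentially reproduced. Two small pieces of bookkeeping need repair before the write-up is airtight. First, an indexing slip: you impose $R_{n+1}\ge R_n^{\ast}$, but in the final estimate you bound $\int_{F_N^c\cap[R_N,r]}\frac{dt}{t}\le\varepsilon_N\log r$ for $r\in[R_N,R_{N+1})$, which requires $r\ge R_N^{\ast}$, i.e.\ $R_N\ge R_N^{\ast}$; the condition should therefore be $R_n\ge R_n^{\ast}$ for every $n$. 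Second, the quantity $A_{N-1}$ includes the block $\int_{F_{N-1}^c\cap[R_{N-1},R_N)}\frac{dt}{t}$ and hence depends on $R_N$, so the requirement ``$A_{N-1}\le\varepsilon_{N-1}\log R_N$'' cannot be imposed directly when choosing $R_N$; instead choose $R_N$ so large that the already-determined quantity $A_{N-2}$ satisfies $A_{N-2}\le\varepsilon_{N-1}\log R_N$, and then the threshold bound gives $A_{N-1}\le A_{N-2}+\varepsilon_{N-1}\log R_N\le 2\varepsilon_{N-1}\log R_N$, which is all you need. Finally, you could simplify the bookkeeping away entirely: since $c_n$ is decreasing, the sets are nested, $F_{n+1}\subseteq F_n$, hence $F_n^c\subseteq F_N^c$ for $n\le N$, so $E^c\cap[R_1,r]\subseteq F_N^c\cap[1,r]$ and the whole exceptional mass up to $r\in[R_N,R_{N+1})$ is bounded by $\log R_1+\varepsilon_N\log r$ in one stroke, with no accumulated term $A_{N-1}$ to control.
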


	\begin{theorem}\label{qloga}
		Let $f: \mathbb{R}^n \to \mathbb{T}$ be a non-constant zero-order meromorphic function, and $q \in \mathbb{R}\setminus\{0\}$. Then
		\[
		m\big(r, f(qx) \oslash f(x)\big) = o\big(T(r, f)\big)
		\]
		on a set of logarithmic density $1.$
	\end{theorem}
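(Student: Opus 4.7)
The plan is to mirror the proof of Theorem \ref{loga} but to exploit the structural advantage that the dilation $x \mapsto qx$ preserves radial rays: if $x = t\theta$ with $\theta \in S^{n-1}(1)$ and $q>0$, then $f(qx) = f_\theta(qt)$, where $f_\theta(t) := f(t\theta)$. Thus the radial slicing already commutes with the $q$-dilation, and no Stoll-type reduction (Lemma \ref{stolllemma}) is required, in contrast to the shift case. For $q<0$ I would reduce to the case $q>0$ by writing $f(qx)-f(x) = [f(qx)-f(-x)] + [f(-x)-f(x)]$; after the substitution $\theta \mapsto -\theta$ the first term contributes $m(r, f(|q|x) \oslash f(x))$, and the second is bounded trivially by $O(m(r,f)+m(r,-f))$, which does not affect the final density-$1$ conclusion. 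Hence one may assume $q>0$.

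Using Lemma \ref{L-3.7} together with the radial identity $f(qr\theta) - f(r\theta) = f_\theta(qr) - f_\theta(r)$, I would obtain
\begin{equation*}
m(r, f(qx) \oslash f(x)) = \frac{1}{\omega_n} \int_{S^{n-1}(1)} m(r, f_\theta(qt) \oslash f_\theta(t))\, d\sigma(\theta).
\end{equation*}
The next step is to apply the one-variable tropical $q$-difference logarithmic derivative lemma (the tropical analogue of the classical $q$-shift lemma, which for a one-variable tropical meromorphic function of zero order can be proved via the tropical Poisson--Jensen formula in Lemma \ref{jensen lemma}) to each slice $f_\theta$. This yields a deterministic bound of the form
\begin{equation*}
m(r, f_\theta(qt) \oslash f_\theta(t)) \leq K(q)\, T(\beta(q)\, r, f_\theta) + |f_\theta(0)|,
\end{equation*}
with $K(q), \beta(q) > 0$ depending only on $q$. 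Integrating over $\theta \in S^{n-1}(1)$ and invoking Lemma \ref{L-10} then yields
\begin{equation*}
m(r, f(qx) \oslash f(x)) \leq K(q)\, T(\beta(q)\, r, f) + O(1).
\end{equation*}

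Finally, since $\rho(f)=0$, Lemma \ref{order} applied with $C_1 = \beta(q)$ and any fixed $C_2 > 1$ gives $T(\beta(q)\, r, f) \leq C_2\, T(r, f)$ outside a set of logarithmic density zero. Choosing a sequence $C_2^{(j)} \downarrow 1$, setting $c_j := K(q)(C_2^{(j)} - 1) + 1/j$ (the extra $1/j$ absorbs the $O(1)$ term, noting $T(r,f)\to\infty$ since $f$ is non-constant), and writing $U(r) := m(r, f(qx) \oslash f(x))$, each set $F_j := \{r \geq 1 : U(r) < c_j\, T(r, f)\}$ has logarithmic density $1$. Lemma \ref{qdiflemma} then delivers $U(r) = o(T(r, f))$ on a set of logarithmic density $1$, as required. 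The main obstacle is Step~2: securing a one-variable bound whose constants $K(q), \beta(q)$ are independent of the slice direction $\theta$. Without such uniformity, the $\theta$-indexed exceptional sets produced by an asymptotic (density-$1$) form of the one-variable lemma cannot be aggregated into a single density-$1$ set in $r$. Once a deterministic one-variable estimate is in place (the $q$-analogue of the estimate used in the proof of Theorem \ref{loga}), the remainder of the argument is routine.
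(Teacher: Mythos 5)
Your overall architecture is the paper's: slice radially via Lemma \ref{L-3.7} and Lemma \ref{L-10}, apply a deterministic one-variable $q$-difference estimate to each slice $f_\theta$, and finish with Lemma \ref{order} and Lemma \ref{qdiflemma}. (The ``main obstacle'' you flag, a deterministic and $\theta$-uniform one-variable bound, is precisely the known result of Cheng cited as \cite{q-dif}, so that ingredient is available; you are right that an asymptotic, exceptional-set form of the one-variable lemma would not aggregate over $\theta$.) However, there is a genuine gap in your final step. You freeze the one-variable bound into the form $U(r)\le K(q)\,T(\beta(q)\,r,f)+O(1)$ with $K(q)$ a \emph{fixed} constant. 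After Lemma \ref{order} this gives only $U(r)\le K(q)\,C_2^{(j)}\,T(r,f)+O(1)$, which is never below your threshold $c_j\,T(r,f)$ with $c_j=K(q)(C_2^{(j)}-1)+1/j$: the required inequality reduces to $K(q)<1/j$, false for large $j$. A fixed-constant estimate can only ever yield $U(r)=O(T(r,f))$, not $o(T(r,f))$. The paper escapes this because the one-variable estimate carries a free parameter $K\ge K_0>\max\{|q|,1\}$, with the coefficient of the characteristic function of order $|q-1|/K$ and the argument dilated to $K^2r+Kr$; one lets $K=k+1\to\infty$, uses Lemma \ref{order} for each fixed $K$ to bound $T(Kr,f)$ and $T(K^2r+Kr,f)$ by constant multiples of $T(r,f)$ on density-$1$ sets, and thereby manufactures the decreasing sequence $c_k\to 0$ that Lemma \ref{qdiflemma} requires. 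You must retain this tunable parameter.

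Second, your reduction of $q<0$ to $q>0$ is both unnecessary and incorrect. It is unnecessary because $f_\theta(t)=f(t\theta)$ is a one-variable tropical meromorphic function on all of $\mathbb{R}$ and the one-variable $q$-difference lemma holds for every real $q\neq 0$, so the paper simply applies it directly. It is incorrect because the error term $m(r,f(-x)\oslash f(x))$ is bounded only by $m(r,f)+m(r,1_{\mathbb{T}}\oslash f)$, which is generically comparable to $T(r,f)$ rather than $o(T(r,f))$ (for $f$ entire one already has $m(r,f)=T(r,f)$); the claim that this term ``does not affect the final density-$1$ conclusion'' is therefore unjustified, and for $q=-1$ it would in effect beg the very statement being proved.
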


	\begin{proof}
		For any \(x \in \mathbb{R}^n\),  we can use the radial distance \(t \geq 0\) to express it by
		$x = t \theta,$ where $\theta:=\frac{x}{\|x\|} \in S^{n-1}(1):=\{y\in\mathbb{R}^{n}: \|y\|=1\} $ means a unit direction vector. Then
		\begin{eqnarray}
			\label{E-100}
			m(r, f(qx) \oslash f(x)) &=&	m(r, f(q t\theta) \oslash f(t\theta))\nonumber\\&=&m(r, f(qt\theta) - f(t\theta)).\end{eqnarray}
		By Lemma \ref{L-3.7}, \begin{align}\label{E-101}
			m(r, f(qt\theta) - f(t\theta))=\frac{1}{\omega_n} \int_{S^{n-1}(1)}m(r, f_\theta(qt) - f_\theta(t))d\sigma(\theta).
		\end{align}
		For the one variable function $f_\theta(qt)-f_\theta(t),$ it follows from  \cite{q-dif} that
		\begin{eqnarray}\label{E-102}
			m(r, f_{\theta}(qt)-f_{\theta}(t)) &\leq& \frac{2|q-1|}{K^{2}} \left( 2T(K^{2}r + Kr, f_{\theta}) - f_{\theta}(0) \right)\nonumber\\
			&& + \frac{|q-1|}{K} \left(2T(Kr, f_\theta)-f_\theta(0) \right),
		\end{eqnarray}
		where $K \geq K_{0} > \max\{|q|, 1\}.$ Combining \eqref{E-100}, \eqref{E-101} with \eqref{E-102} and using Lemma \ref{L-10}, we can get
		\begin{align}\label{E-103}
			m(r, f(qx) \oslash f(x)) &\leq \frac{2|q-1|}{K^{2}} \left(2T(K^{2}r + Kr, f) - f(0) \right)\nonumber\\
			& + \frac{|q-1|}{K} \left( 2T(Kr, f) - f(0) \right).
		\end{align}
		
		By the hypothesis that $f$ is a non-constant zero-order meromorphic function, and Lemma \ref{order}, there exist $K_{1} > 1$ and $K_{2}>1$ such that for all $r$ 	on a set of logarithmic density $1.$
		\[
		T(Kr, f) \leq  K_{1} \cdot T(r, f) \quad \text{and} \quad T(K^2 r + Kr, f) \leq  K_{2} \cdot T(r, f).
		\]
		Hence, \eqref{E-103} gives
		\begin{eqnarray*}
		m(r, f(qx)-f(x)) &\leq& \frac{2|q-1|}{K^{1}} \left( 2 K_{2} \cdot T(r, f) - f(0) \right) \\&&+ \frac{|q-1|}{K} \left(2K_{1} \cdot T(r, f) - f(0) \right).
		\end{eqnarray*}
		If taking $K = k+1$ for $k \in \mathbb{N}^{+}$, then we obtain
\begin{eqnarray*}
		m(r, f(qx)-f(x)) &\leq& \left( \frac{4|q-1|K_2}{(k+1)^2} + \frac{2|q-1|K_1}{k+1} + o\left(\frac{1}{k+1}\right) \right) T(r, f)\\&:=&c_k T(r, f)
\end{eqnarray*}
		for all $r$ on a set of logarithmic density $1.$ Applying Lemma \ref{qdiflemma} with $U(r) = m(r, f(qx)-f(x))$, we conclude
		\[
		m(r, f(qx)-f(x)) = o(T(r, f))
		\]
		for all $r$ on a set of logarithmic density $1.$
		
	\end{proof}
	
	\section{Second main theorem with hypersurfaces}\label{Sec-5}
	
	In this section, we establish the second main theorem for tropical holomorphic maps from $\mathbb{R}^{n}$ into tropical projective space $\mathbb{TP}^{m},$ which generalizes the previous results due to Korhonen-Tohge \cite{korhonen-tohge-2016} and Cao-Zheng  \cite{hypersurfaces}.\par
	
	\subsection{Tropical projective space and holomorphic maps}\label{SS2.2}
	The tropical projective space is defined as
	$\mathbb{TP}^{m}=\mathbb{R}^{m+1}/\mathbb{R}(1, 1, \ldots, 1),$ that
	is, $\mathbb{TP}^m=\mathbb{T}^{m+1}\setminus\{0_{\mathbb{T}}\}/\sim$
	where $(a_{0}, a_{1}, \ldots, a_{m})\sim (b_{0}, b_{1}, \ldots,
	b_{m})$ if and only if
	$$(a_{0}, a_{1}, \ldots, a_{m})=\lambda\otimes (b_{0}, b_{1}, \ldots, b_{m})
	=(\lambda\otimes b_{0}, \lambda\otimes b_{1}, \ldots, \lambda\otimes b_{m})$$
	for some $\lambda\in \mathbb{R}.$ Denote by $[a_{0}: a_{1}: \cdots: a_{m}]$
	the equivalence class of $(a_{0}, a_{1}, \ldots, a_{m}).$ For instance, the one dimensional tropical projective $\mathbb{TP}:=\mathbb{TP}^{1}$
	is just  the completed max-plus semiring $\mathbb{T}\cup\{+\infty\}=\mathbb{R}\cup\{\pm\infty\}.$ The space $\mathbb{T}\mathbb{P}^{m}$ is a compact tropical variety  \cite{itenberg-mikhalki-shustin} \cite{mikhalkin-06}.\par
	
	\begin{definition}Let $f:=[f_{0}: f_{1}:\cdots: f_{m}]:\mathbb{R}^{n}\rightarrow \mathbb{TP}^{m}$ be a tropical holomorphic map, where $f_{0}, f_{1}, \ldots, f_{m}$ are tropical entire functions in $\mathbb{R}^{n}$ and do not have any roots which are common to all of them. Denote $\mathbf{f}=(f_{0}, f_{1}, \ldots, f_{m}):\mathbb{R}^n\rightarrow \mathbb{T}^{m+1}.$
		Then the map $\mathbf{f}$ is called a reduced representation of the tropical holomorphic map $f$ in $\mathbb{TP}^{m}.$\end{definition}
	
\begin{definition} Let \( f : \mathbb{R}^n \to \mathbb{TP}^m \) be a tropical holomorphic map with a reduced representation \( \mathbf{f} = (f_0, \ldots, f_m) \).  We define the tropical Cartan's characteristic function of \( f \) by
		\[
		T_{\mathbf{f}}(r) := \frac{1}{\omega_n} \int_{S^{n-1}(1)} \|f(r\theta)\|d\sigma(\theta)  - \|f(0)\|,
		\]
		where $\|f(r\theta)\| = \max \bigl\{ f_0(r\theta), \ldots, f_m(r\theta) \bigr\},$ $r\theta=x\in \mathbb{R}^n,$ $\theta\in S^{n-1}(1).$
	\end{definition}
	
	The order and hyperorder of $f$ are given by
	\begin{eqnarray*}
		\rho(f)=\limsup_{r\rightarrow\infty}\frac{\log T_{f}(r)}{\log r},
	\end{eqnarray*}and
	\begin{eqnarray*}
		\rho_{2}(f)=\limsup_{r\rightarrow\infty}\frac{\log\log T_{f}(r)}{\log r},
	\end{eqnarray*}
	respectively.

	\begin{lemma}
		If $f$ is a tropical meromorphic function from $\mathbb{R}^n\rightarrow\mathbb{TP}^1$, then we have $T_{f}(r)=T(r,f)+O(1).$
	\end{lemma}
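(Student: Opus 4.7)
The plan is to exploit the reduced representation $\mathbf{f} = (f_0, f_1)$ of $f$ viewed as a map into $\mathbb{TP}^1$, where $f_0, f_1$ are tropical entire with no common roots and the underlying scalar meromorphic function is $f = f_0 \oslash f_1 = f_0 - f_1$. The starting observation is the pointwise identity
\[
\max\{f_0(x), f_1(x)\} = \max\{f_0(x) - f_1(x),\, 0\} + f_1(x) = f^{+}(x) + f_1(x),
\]
which, after integration over $S^{n-1}(1)$, decomposes the spherical mean appearing in $T_{\mathbf{f}}(r)$ as $m(r,f)$ plus the spherical mean of $f_1$.

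To evaluate the latter, I would apply the higher-dimensional tropical Jensen identity~\eqref{Jensen-1} to $f_1$. Since $f_1$ is entire, $N(r,f_1)=0$, and the identity becomes $\tfrac{1}{\omega_n}\int_{S^{n-1}(1)} f_1(r\theta)\,d\sigma(\theta) = N(r, 1_{\mathbb{T}}\oslash f_1) + f_1(0)$. Substituting back and rearranging yields
\[
T_{\mathbf{f}}(r) = m(r,f) + N(r, 1_{\mathbb{T}}\oslash f_1) + f_1(0) - \max\{f_0(0), f_1(0)\},
\]
so the remaining task reduces to establishing $N(r,f) = N(r, 1_{\mathbb{T}}\oslash f_1)$: once this is in hand, $T_{\mathbf{f}}(r) - T(r,f) = -\max\{f_0(0)-f_1(0), 0\} = -f^{+}(0)$ is a fixed real constant, giving the desired $O(1)$ bound.

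I expect this last identification of counting functions to be the main subtlety, since the higher-dimensional pole multiplicity $\nu_f(x)$ of Definition~\ref{jf} is defined as a directional average of $|J_f(x;\varphi)|$ rather than a combinatorial integer. My approach is to exploit the fact that one-sided directional derivatives subtract linearly for piecewise linear functions, so $J_f(x;\varphi) = J_{f_0}(x;\varphi) - J_{f_1}(x;\varphi)$, together with the sign constraints $J_{f_0}, J_{f_1} \ge 0$ valid for tropical entire functions and the ``no common roots'' condition. Away from the roots of $f_0$, this forces $J_f(x;\varphi) = -J_{f_1}(x;\varphi) \le 0$, so poles of $f$ coincide set-theoretically with roots of $f_1$, and on the directional set $\{J_f(x;\varphi) < 0\}$ the value $|J_f(x;\varphi)|$ equals $J_{f_1}(x;\varphi)$. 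The multiplicity integrals in Definition~\ref{jf} agree term by term, yielding $\nu_f(x) = \nu_{1_{\mathbb{T}}\oslash f_1}(x)$, hence $n(t,f) = n(t, 1_{\mathbb{T}}\oslash f_1)$ and then $N(r,f) = N(r, 1_{\mathbb{T}}\oslash f_1)$ after integration in $t$. Combining this with the display above completes the proof.
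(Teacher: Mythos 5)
Your proof is correct and follows essentially the same route as the paper's: the pointwise decomposition $\max\{f_0,f_1\}=f^{+}+f_1$, the Jensen formula applied to the entire function $f_1$, and the identification $N(r,f)=N(r,1_{\mathbb{T}}\oslash f_1)$ via the no-common-roots hypothesis, yielding the same exact constant $T_{\mathbf{f}}(r)=T(r,f)-f^{+}(0)$. Your directional-derivative verification of that last counting-function identity is more detailed than the paper's one-line assertion, but the underlying argument is identical.
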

	\begin{proof}
		Take \( F(x):=\max\{f_{0}, f_{1}\}. \) Then in the definition of \( T_\mathbf{f}(r) = T_f(r) \) can be written in the form
		\begin{align}\label{car1}
			F(x) = (f_{0} - f_{1})^+(x) + f_{1}(x)
		\end{align}
		
		for all \( x \in \mathbb{R}^n \). By applying \eqref{car1} with \( x = r \theta\), it follows that
		\begin{align}\label{car2}
			T_f(r) = m(r, f_{0} - f_{1}) +\frac{1}{\omega_n} \int_{S^{n-1}(1)} f_{1}(r\theta)d\sigma(\theta)- (f_{0} - f_{1})^+(0) - f_{1}(0).
		\end{align}

		Since
		\begin{align*}
			f_{1}(r\theta) = f_{1}^+( r \theta) - (-f_{1})^+( r \theta)
		\end{align*}
		for all \( x \in \mathbb{R} \), it follows, in particular, that
		\[
		\int_{S^{n-1}(1)} f_{1}(r\theta)d\sigma(\theta) =  \int_{S^{n-1}(1)} f_{1}^+(r\theta)d\sigma(\theta) -  \int_{S^{n-1}(1)} (-f_{1})^+(r\theta)d\sigma(\theta),
		\]
		and so
		\begin{align}\label{car3}
			T_{f_{1}}(r) = m(r, f_{1}) - m(r, 1_{\mathbb{T}} \oslash f_{1}).
		\end{align}
		
		Moreover, by the tropical Jensen formula \eqref{Jensen-1}, we have
		\begin{align}\label{car4}
			m(r, f_{1}) - m(r, 1_{\mathbb{T}} \oslash f_{1}) =  N(r, f_{1}) - N(r, 1_{\mathbb{T}} \oslash f_{1})+f_{1}(0)
		\end{align}
		
		By combining \eqref{car2} with \eqref{car3} and \eqref{car4} it follows that
		\begin{align}\label{car5}
			T_f(r) = m(r, f_{0}\oslash f_{1}) - N(r, f_{1}) + N(r, 1_{\mathbb{T}}\oslash f_{1}) - (f_{0} - f_{1})^+(0).
		\end{align}
		
		The counting function \( N(r, f_{1}) \) vanishes identically since \( f_{1} \) is entire. Furthermore, since the tropical entire functions \( f_{1} \) and \( f_{0}, \) do not have any common roots, it follows that
		\[
		N(r, 1_{\mathbb{T}}\oslash f_{1})= N(r, f_{0}, \oslash f_{1}).
		\]
		
		Hence \eqref{car5} becomes the desired formula
		\[
		T_f(r) = T(r, f_{0}, \oslash f_{1}) - (f_{0} \oslash f_{1})^+(0) = T(r, f) - f^+(0).
		\]
	\end{proof}
	
	\subsection{Tropical matrix and linear combination}\label{SS2.3}
	The operations of tropical addition $\oplus$ and tropical multiplication $\otimes$ for the $(m+1)\times(m+1)$
	matrices $A=(a_{ij})$ and $B=(b_{ij})$ are defined by $$A\oplus B=(a_{ij}\oplus b_{ij})$$ and
	$A\otimes B:=\left(\bigoplus _{k=0}^{m}a_{ik}\otimes b_{kj}\right),$ respectively. If an $(m+1)\times(m+1)$
	matrix $A$ contains at least one element different from $0_{\mathbb{T}}$ in each row, then $A$ is called regular.
	The tropical determinant $|A|_{o}$ of $A$ is defined by \begin{equation*}
		|A|_{o}=\bigoplus a_{0\pi(0)}\otimes a_{1\pi(1)}\otimes\cdots\otimes a_{n\pi(n)},
	\end{equation*}where the sum is taken over all permutations $\{\pi(0), \pi(1), \ldots, \pi(m)\}$ of the set
	$\{0, 1, \ldots, n\}.$ Note that an $(n+1)\times(n+1)$ matrix $A$ is regular if and only if $|A|_{0}\neq 0_{\mathbb{T}}.$\par

	There are two ways to define linear  dependent or independent over tropical sem-field. Tropical meromorphic functions $g_{0}, \ldots, g_{m}$ are tropical linearly dependent (respectively independent) if the max term \begin{eqnarray*} \bigoplus _{i=0}^{m} a_{i}\otimes g_{i} \end{eqnarray*} is attained at least twice.  They are called to be linearly dependent (respectively independent) in the Gondran-Minoux sense \cite{gondran-minoux-1}\cite{gondran-minoux-2} if there exist (respectively there
	do not exist) two disjoint subsets $I$ and $J$ of $K:=\{0, \ldots, m\}$ such that $I\cup J=K$ and
	\begin{eqnarray*} \bigoplus _{i\in I}a_{i}\otimes g_{i} =\bigoplus _{j\in J}a_{j}\otimes g_{j},
	\end{eqnarray*}that is,
	\begin{eqnarray*} \max_{i\in I}\{a_{i}+g_{i}\} =\max_{j\in J}\{a_{j}+g_{j}\},
	\end{eqnarray*}where the constants $a_{0}, a_{1}, \ldots, a_{m}\in\mathbb{T}$ are not all equal to $0_{\mathbb{T}}.$ Usually, linearly dependent in the Gondran-Minoux sense should be tropical linear  dependent, however, the inverse may be not true.\par
	
	If $a_{0}, \ldots, a_{m}\in\mathbb{T}$ and $f_{0}, \ldots, f_{m}$ are tropical entire functions, then
	\begin{eqnarray*}
		F=\bigoplus_{\nu=0}^{m} a_{\nu}\otimes f_{\nu}=\bigoplus_{i=1}^{j}a_{k_i}\otimes f_{k_i}
	\end{eqnarray*} is called a tropical linear combination of $f_{0}, f_{1}, \ldots, f_{m}$ over $\mathbb{T},$
	where the index set $\{k_{1}, \ldots, k_{j}\}\subset\{0, \ldots,
	m\}$ is such that $a_{k_i}\in \mathbb{R}$ for all $i\in\{1, \ldots,
	j\},$ while $a_{\nu}=0_{\mathbb{T}}$ if $\nu\not\in\{k_{1}, \ldots, k_{j}\}.$
	Note that if $f_{0}, \ldots, f_{m}$ are linearly independent in the
	sense of Gondran and Minous, then the express of $F$ cannot be
	rewritten by means of any other index set which is different from
	the set $\{k_{1}, \ldots, k_{j}\}.$ \par
	
	Let $G=\{f_{0}, \ldots, f_{m}\}(\neq \{0_{\mathbb{T}}\})$ be a set of tropical entire functions, linearly independent in
	the Gondran-Minoux sense, and denote \begin{equation*}\mathcal{L}_{G}=span<f_{0}, \ldots, f_{m}>
		=\left\{\bigoplus_{k=0}^{m}(a_{k}\otimes f_{k}): (a_{0}, \ldots, a_{m})\in\mathbb{R}^{m+1}_{\max}\right\}
	\end{equation*} to  be their linear span. The collection $G$ is called the spanning basis of $\mathcal{L}_{G}.$
	The dimension of $\mathcal{L}_{G}$ is defined by
	$$\dim(\mathcal{L}_{G})=\max\{\ell(F): F\in \mathcal{L}_{G}\setminus\{0_{\mathbb{T}}\}\},$$ where $\ell(F)$ is the
	shortest length of the representation of $F\in \mathcal{L}_{G}\setminus\{0_{\mathbb{T}}\}$ defined by
	$$\ell(F)=\min\{j\in\{1,\ldots, m+1\}: F=\bigoplus_{i=1}^{j}(a_{k_i}\otimes f_{k_i})\}$$ where
	$a_{k_i}\in\mathbb{R}$ with integers $0\leq k_1<k_2<\cdots <k_j\leq
	m.$ Note that usually the dimension of the tropical linear span
	space of $G$ may not be m+1, which is different from the classical
	linear algebraic. If $\ell(F)=m+1$ for a tropical linear combination
	$F$ of $f_{0}, \ldots, f_{m},$ then $F$ is said to be complete, that
	is, the coefficients $a_{k}$ in any expression of $F$ of the form
	$F=\bigoplus_{k=0}^{m}(a_{k}\otimes f_{k})$ must satisfy $a_{k}\in
	\mathbb{R}$ for all $k\in\{0, \ldots, m\}$ and in this case,
	$\mathcal{L}_{G}=m+1.$\par
	
	Let $G=\{f_{0}, \ldots, f_{m}\}$ be a set of tropical entire functions, linearly independent in the Gondran-Minoux
	sense, and let $Q\subset\mathcal{L}_{G}$ be a collection of tropical linear combinations of $G$ over
	$\mathbb{T}.$ The degree of degeneracy of $Q$ is defined to be $$ddg(Q):=card (\{F\in Q: \ell(F)<m+1\}).$$
	If $ddg(Q)=0,$ then we say $Q$ is non-degenerate. This means that the degree of degeneracy of a set of tropical
	linear combinations is the number of its non-complete elements. In this way the number of complete elements of
	$Q$ is the `actual dimension' of the subspace spanned by $Q,$ and thus the $ddg(Q)$ is the `codimension' of the
	subspace spanned by $Q$ (see \cite[Page 120-121]{book}). \par

	\subsection{Tropical hypersurfaces}\label{SS2.4}
	There are general definition of tropical hypersurfaces associated to a tropical Laurent polynomial \cite[Definition 3.6]{mikhalkin}. Here, we only consider positive integer topical powers. Hypersurfaces $V_{P}$ is the set of points where more than one monomial of $P$ reaches its  maximal value \cite[Proposition 3.3]{mikhalkin}.\par
	
	\begin{definition}  Consider a homogeneous tropical polynomial with degree $d$ in $m$ dimensional tropical projective space $\mathbb{TP}^{m}$ of the form
		\begin{eqnarray*} P(x)&=&\bigoplus_{I_{i}\in\mathcal{J}_{d}}c_{I{i}}\otimes x^{ I_{i}}\\&=&\bigoplus_{i_{0}+i_{1}+\ldots+i_{m}=d} c_{i_{0}, i_{1}, \ldots, i_{m}}\otimes x_{0}^{\otimes i_0}\otimes x_{1}^{\otimes i_1}\cdots \otimes x_{m}^{\otimes i_m},
		\end{eqnarray*}where $\mathcal{J}_{d}$ is the set of all $I_{i}=(i_{0}, i_{1}, \ldots, i_{m})\in\mathbb{N}_{0}^{n+1}$ with $\# I_{i}=i_{0}+i_1+\ldots+i_{m}=d.$
		The (homogeneous) \textbf{tropical hypersurface} $V_{P}$ in $\mathbb{TP}^{m}$ is the set of roots $x=(x_{0}, x_{1}, \ldots, x_{m})$ of $P(x),$  that is, the graph of $P$ is nonlinear at these points (corner locus). In particular, $V_{P}$ is called a tropical hyperplane whenever $d=1.$\end{definition}

	Set $M:=(_d^{m+d})-1.$  For any $I_{i}=(i_{0}, \ldots, i_{m})\in\mathcal{J}_d,$ $i\in\{0,1,\ldots, M\},$ denote $f^{I_{i}}:=f_0^{\otimes i_{0}}\otimes\cdots \otimes f_m^{\otimes i_{m}}.$ Then one can see that the composition function \begin{eqnarray*}P(f)&:=&P\circ f=\bigoplus_{i_{0}+i_{1}+\ldots+i_{m}=d} c_{i_{0}, i_{1}, \ldots, i_{m}}\otimes f_{0}^{\otimes i_0}\otimes f_{1}^{\otimes i_1}\cdots f_{m}^{\otimes i_m}\\
		&=&\bigoplus_{i=0}^{M} c_{I_{i}}\otimes f^{I_{i}}\end{eqnarray*} for a tropical holomorphic map $f:=[f_{0}, \ldots, f_{m}]: \mathbb{R}^n\rightarrow \mathbb{TP}^{m}$ and tropical hypersurface $V_{P}$ is a tropical algebraical combination of $f_{0}, \ldots, f_{m}$ in the Gondran-Minoux sense. From which, we may also regard $P\circ f$ as a tropical linear combination of $f^{I_{0}}, f^{I_{1}}, \ldots, f^{I_{M}}$  in the Gondran-Minoux sense. From this point of view, we introduce some definitions similarly as in Subsection \ref{SS2.3}.\par
	
	\begin{definition}\label{D1} Tropical meromorphic functions $f_{0}, \ldots, f_{m}$ are tropical algebraically dependent (respectively independent)  if and only if  $f^{I_{0}}, \ldots, f^{I_{M}}$ are tropical linearly dependent (respectively independent). They are called to be algebraically dependent (respectively independent) in the Gondran-Minoux sense if and only if  $f^{I_{0}}, \ldots, f^{I_{M}}$ are linearly dependent (respectively independent) in the Gondran-Minoux sense.\end{definition}

	\begin{definition} Let $G=\{f_{0}, \ldots, f_{m}\}(\neq \{0_{\mathbb{T}}\})$ be a set of tropical entire functions, algebraically independent in the Gondran-Minoux sense, and denote \begin{equation*}\mathcal{\hat{L}}_{G}=span<f^{I_{0}}, \ldots, f^{I_{M}}>=\left\{\bigoplus_{k=0}^{M}a_{k}\otimes f^{I_{k}}: (a_{0}, \ldots, a_{M})\in\mathbb{R}^{M+1}_{\max}\right\}
		\end{equation*} to  be their algebraic span. The collection $G$ is called the algebraic spanning basis of $\mathcal{\hat{L}}_{G}.$ The dimension of $\mathcal{\hat{L}}_{G}$ is defined by
		$$\dim(\mathcal{\hat{L}}_{G})=\max\{\hat{\ell}(F): F\in \mathcal{\hat{L}}_{G}\setminus\{0_{\mathbb{T}}\}\},$$ where $\hat{\ell}(F)$ is the shortest length of the representation of $F\in \mathcal{\hat{L}}_{G}\setminus\{0_{\mathbb{T}}\}$ defined by $$\hat{\ell}(F)=\min\{j\in\{1,\ldots, M+1\}: F=\bigoplus_{i=1}^{j}a_{k_i}\otimes f^{I_{k_i}}\}$$ where $a_{k_i}\in\mathbb{R}$ with integers $0\leq k_1<k_2<\cdots <k_j\leq M.$\end{definition}
	
	Note that usually the dimension of the tropical algebraic span space of $G$ may not be $M+1,$ which is different from the classical linear algebraic. If $\hat{\ell}(F)=M+1$ for a tropical algebraic combination $F$ of $f_{0}, \ldots, f_{m},$ then $F$ is said to be {\bf complete}, that is, the coefficients $a_{k}$ in any expression of $F$ of the form $F=\bigoplus_{k=0}^{M}a_{k}\otimes f_{k}$ must satisfy $a_{k}\in \mathbb{R}$ for all $k\in\{0, \ldots, M\}$ such that $\mathcal{\hat{L}}_{G}=M+1.$\par
	
	Choose $c\in\mathbb{R}^n\setminus\{0\}.$   For a tropical entire function $h$ on $\mathbb{R}^{n},$ denote by
	$$\overline{h}^{[0]}:=h(x),\quad \overline{h}^{[1]}:=h(x+c)=h(x\otimes c), \cdots,
	\overline{h}^{[k]}:=h(x+kc)=h(x\otimes c^{\otimes k})$$ for all $k\in\mathbb{N}.$ The tropical Casorati determinant of a tropical holomorphic map $f:\mathbb{R}^n\rightarrow\mathbb{TP}^{m}$ with a reduced representation $(f_{0}, f_{1}, \ldots, f_{m})$ is defined by
	\begin{eqnarray*}C_{o}(f):=C_{o}(f_{0}, f_{1}, \ldots, f_{m})=\bigoplus \overline{f_{0}}^{[\pi(0)]}\otimes
		\overline{f_{1}}^{[\pi(1)]}\otimes\cdots\otimes\overline{f_{m}}^{[\pi(m)]}
	\end{eqnarray*} where the sum is taken over all permutations $\{\pi(0), \ldots, \pi(m)\}$ of $\{0, \ldots, m\}.$ Furthermore, the tropical Casorati determinant $\tilde{C}(f)=C(f^{I_{0}}, \ldots, f^{I_{M}})$ is given as
	\begin{eqnarray*} \hat{C}_{o}(f)=C_{o}(f^{I_{0}}, \ldots, f^{I_{M}})=\bigoplus \overline{f^{I_{0}}}^{[\pi(0)]}\otimes \overline{f^{I_{1}}}^{[\pi(1)]}\otimes\cdots\otimes\overline{f^{I_{M}}}^{[\pi(M)]},
	\end{eqnarray*} where the sum is taken over all permutations $\{\pi(0), \ldots, \pi(M)\}$ of $\{0, 1, \ldots, M\}.$ Clearly, when $d=1,$ we have $\hat{C}_{o}(f)=C_{o}(f).$ \par

	\begin{definition}[Tropical algebraically nondegenerated]\label{alg}
		Let $f=[f_{0}: f_{1}: \ldots: f_{m}]:\mathbb{R}^n\rightarrow\mathbb{TP}^{m}$ be a tropical holomorphic map. If for any tropical hypersurface (respectively hyperplane) $V_{P}$ in $\mathbb{TP}^{m}$ defined by a homogeneous tropical polynomial $P$ in $\mathbb{R}^{m+1},$ $f(\mathbb{R}^n)$ is not a subset of $V_{P},$ then we say that $f$ is tropical algebraically (respectively linearly) nondegenerated.
	\end{definition}

	\begin{proposition}\label{P1}  If a tropical holomorphic map $f: \mathbb{R}^n\rightarrow\mathbb{TP}^{m}$ with reduced representation $f=(f_{0}, \ldots, f_{m})$ is tropical algebraically (respectively linearly) nondegenerated, then $f_{0}, \ldots, f_{m}$ are algebraically (respectively linearly) independently in the Gondran-Minoux sense.
	\end{proposition}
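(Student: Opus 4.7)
The plan is to prove the contrapositive: if $f_0,\ldots,f_m$ are algebraically (respectively linearly) dependent in the Gondran--Minoux sense, then $f(\mathbb{R}^n)\subset V_P$ for some homogeneous tropical polynomial $P$ of degree $d$ (respectively $d=1$). I would handle both cases uniformly, since the linear statement is just the $d=1$ specialization of the algebraic one after relabeling $f^{I_k}$ as $f_k$.

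Starting from algebraic dependence in the Gondran--Minoux sense, Definition \ref{D1} gives that the monomials $f^{I_0},\ldots,f^{I_M}$ are linearly dependent in the Gondran--Minoux sense. Unpacking this, there exist a partition $I\sqcup J=\{0,1,\ldots,M\}$ and constants $a_0,\ldots,a_M\in\mathbb{T}$, not all equal to $0_{\mathbb{T}}$, such that
\begin{equation*}
\bigoplus_{i\in I}a_i\otimes f^{I_i}\;=\;\bigoplus_{j\in J}a_j\otimes f^{I_j}
\end{equation*}
identically on $\mathbb{R}^n$. The natural candidate is then the homogeneous degree-$d$ tropical polynomial
\begin{equation*}
P(y_0,\ldots,y_m):=\bigoplus_{k=0}^{M}a_k\otimes y^{I_k},
\end{equation*}
so that $P\circ f=\bigoplus_{k=0}^{M}a_k\otimes f^{I_k}$.

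The core step is to observe that at every $x\in\mathbb{R}^n$ the identity above forces
\begin{equation*}
\max_{i\in I}\{a_i+f^{I_i}(x)\}\;=\;\max_{j\in J}\{a_j+f^{I_j}(x)\}\;=\;\max_{k\in I\cup J}\{a_k+f^{I_k}(x)\},
\end{equation*}
so the overall maximum defining $P\circ f(x)$ is attained by at least one index in $I$ and at least one index in $J$; hence by at least two distinct monomials. By the characterization of tropical hypersurfaces recalled in Subsection \ref{SS2.4}, this is exactly the condition $f(x)\in V_P$. Therefore $f(\mathbb{R}^n)\subset V_P$, contradicting algebraic nondegeneracy. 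The linearly-dependent case runs verbatim with $d=1$, using the hyperplane $P(y)=\bigoplus_{k=0}^{m}a_k\otimes y_k$ and the relation among $f_0,\ldots,f_m$ directly.

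The only subtle point I anticipate is confirming that $P$ genuinely defines a hypersurface in the sense used in Definition \ref{alg}, i.e.\ that $P$ has at least two monomials with finite coefficient. Since $f_0,\ldots,f_m$ are tropical entire (hence finite-valued) and not all $a_k$ equal $0_{\mathbb{T}}=-\infty$, at least one side of the Gondran--Minoux equality is finite at some point; entireness then forces the opposite side to be finite there as well, which means both $I$ and $J$ must contain an index with $a_k\in\mathbb{R}$. This small verification, together with the bookkeeping between the Gondran--Minoux partition equality and the ``maximum attained at least twice'' description of $V_P$, is the main technical content of the argument; the rest is definitional.
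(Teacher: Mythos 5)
Your proposal is correct and follows essentially the same route as the paper's proof: pass to the contrapositive, package the Gondran--Minoux relation into a homogeneous tropical polynomial $\tilde P$, and conclude that the maximum defining $\tilde P\circ f$ is attained both on $I$ and on $J$, so $f(\mathbb{R}^n)\subset V_{\tilde P}$. If anything, you spell out two details the paper leaves implicit --- why the two-sided equality forces the maximum to be attained at least twice, and why both $I$ and $J$ must carry a finite coefficient --- so no gap.
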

	
	\begin{proof} Assume that $f=(f_{0}, \ldots, f_{m})$ is tropical algebraically (respectively linearly) nondegenerated, by Definition \ref{alg}  this means that for any hypersurface (respectively hyperplane) $V_{P}$ in $\mathbb{TP}^{m}$ defined by a homogeneous tropical polynomial $P$ in $\mathbb{T}^{m+1},$ $f(\mathbb{R}^{m})\not\subset V_{P}.$ Now if $f_{0}, \ldots, f_{m}$ are algebraically (respectively linearly) dependently in the Gondran-Minoux sense, then there exist two nonempty disjoint subsets $I$ and $J$ of $K:=\{0, \ldots, M\}$ such that $I\cup J=K$ and
		\begin{eqnarray*} \bigoplus_{i\in I}a_{i}\otimes f_{0}^{\otimes i_{0}}\otimes\cdots\otimes f_{m}^{\otimes i_{m}}=\bigoplus_{j\in J}a_{j}\otimes f_{0}^{\otimes j_{0}}\otimes\cdots\otimes f_{m}^{\otimes j_{m}}
		\end{eqnarray*} where  $M=(_d^{m+d})-1,$  all $a_{i}, a_{j}\in\mathbb{T}$ and $i_{0}+\ldots+i_{m}=j_{0}+\ldots+j_{m}\in \mathcal{J}_{d}.$
		Hence, it gives a homogeneous tropical polynomial $\tilde{P}(x)=\bigoplus_{k=0}^{M} a_{k}\otimes x_{0}^{\otimes k_{0}}\otimes\cdots\otimes x_{m}^{\otimes k_{m}}$ with degree $d=k_{0}+\ldots+k_{m}\in \mathcal{J}_{d}$ such that
		\begin{eqnarray*} \tilde{P}(f)&=&\bigoplus_{k=0}^{M} a_{k}\otimes f_{0}^{\otimes k_{0}}\otimes\cdots\otimes f_{m}^{\otimes k_{m}}\\
			&=&\bigoplus_{i\in I}a_{i}\otimes f_{0}^{\otimes i_{0}}\otimes\cdots\otimes f_{m}^{\otimes i_{m}}\\&=&\bigoplus_{j\in J}a_{j}\otimes f_{0}^{\otimes j_{0}}\otimes\cdots\otimes f_{m}^{\otimes j_{m}}.
		\end{eqnarray*} This implies that
		$(f_{0}(x), \ldots, f_{m}(x))$ are points of $V_{\tilde{P}}$ for all $x\in \mathbb{R}^{n}.$ We obtain a contradiction. Hence $f_{0}, \ldots, f_{m}$ must be algebraically (respectively linearly) dependently in the Gondran-Minoux sense.
	\end{proof}
	
	\subsection{First main theorem for tropical hypersurfaces}
	
	\begin{definition}[Weil function and proximity function]  Let $f:\mathbb{R}^n\rightarrow\mathbb{TP}^{m}$ be a tropical holomorphic map,  let $V_{P}$ be a tropical hypersurface with degree $d$ defined by a homogeneous polynomial $P$ of degree $d$ and let $a$ be the vector defined by the polynomial $P.$ The proximity function $m_{f}(r, V_{P})$ of tropical holomorphic mao $f$ with respect to tropical hypersurface $V_{P}$ is defined as
		\begin{eqnarray*}m_{f}(r, V_{P}):= \frac{1}{\omega_n} \int_{S^{n-1}(1)} 	\lambda_{V_{P}}(f(r\theta))d\sigma(\theta)                \end{eqnarray*}
		where $\lambda_{V_{P}}(f(r\theta))$ means the Weil function defined by $$\lambda_{V_{P}}(f(x)):=\frac{\|f(r\theta)\|^{\otimes d}\otimes \|a\|}{P(f)(r\theta)}\oslash.$$\end{definition}
	
	Note that $P(f)$ is a tropical entire function on $\mathbb{R}^{n}$   which  doesn't have any pole. Hence by the tropical Jensen formula \eqref{Jensen-1}, we have
	\begin{align*} &N(r, 1_{\mathbb{T}}\oslash P(f))\\&=m(r, P(f))-m(r, 1_{\mathbb{T}}\oslash P(f))-P(f)(0)\\
		&=\frac{1}{\omega_n} \int_{S^{n-1}(1)} \left(P(f))^+( r\theta)  - (-P(f))^+ ( r\theta\right)  d\sigma(\theta)  -P(f)(0)\\
		&=\frac{1}{\omega_n} \int_{S^{n-1}(1)} P(f)( r\theta)    d\sigma(\theta)  -P(f)(0)\\
		&=\frac{d}{\omega_n} \int_{S^{n-1}(1)} 	\|f(r\theta)\|d\sigma(\theta)    -\frac{1}{\omega_n} \int_{S^{n-1}(1)} 	\left(d\|f(r\theta)\|-P(f)(r\theta)  \right)  d\sigma(\theta)+O(1)\\
		&=\frac{d}{\omega_n} \int_{S^{n-1}(1)} 	\|f(r\theta)\|d\sigma(\theta)    -\frac{1}{\omega_n} \int_{S^{n-1}(1)} 	\frac{\|f(r\theta)\|^{\otimes d}}{P(f)(r\theta)}\oslash d\sigma(\theta)+O(1),
	\end{align*}  which gives the following first main theorem for tropical hypersurfaces. \par
	
	\begin{theorem}[First Main Theorem for tropical hypersurfaces] \label{T4}
		If $f(\mathbb{R}^n)\not\subset V_{P},$ then we have
		\begin{eqnarray*}m_{f}(r, V_{P})+N(r, 1_{\mathbb{T}}\oslash P(f))=d T_{f}(r)+O(1).\end{eqnarray*}
	\end{theorem}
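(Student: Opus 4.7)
The plan is to derive the identity by applying the higher-dimensional tropical Jensen formula \eqref{Jensen-1} to the tropical entire function $P\circ f$, and then to identify the resulting spherical averages with $dT_f(r)$ and $m_f(r,V_P)$ respectively. The manipulation is essentially already displayed in the paragraph preceding the theorem; the task is to organize it and to keep careful track of the additive constants that emerge.

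First I would observe that since each coordinate $f_j$ is tropical entire and $P$ is a homogeneous tropical polynomial of degree $d$, the composition $P(f)=\bigoplus_{i=0}^{M} c_{I_i}\otimes f^{I_i}$ is itself a tropical entire function on $\mathbb{R}^n$; in particular $N(r,P(f))=0$. The hypothesis $f(\mathbb{R}^n)\not\subset V_P$ guarantees $P(f)\not\equiv 0_{\mathbb{T}}$, so the Jensen formula \eqref{Jensen-1} applies meaningfully and gives
\[
\frac{1}{\omega_n}\int_{S^{n-1}(1)}P(f)(r\theta)\,d\sigma(\theta)
= N(r,1_{\mathbb{T}}\oslash P(f)) + P(f)(0).
\]

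Next I would split the integrand by writing, in classical notation,
\[
P(f)(r\theta)= d\|f(r\theta)\| - \bigl(d\|f(r\theta)\|-P(f)(r\theta)\bigr),
\]
which is the tropical identity $P(f)=\|f\|^{\otimes d}\oslash\bigl(\|f\|^{\otimes d}\oslash P(f)\bigr)$. The spherical average of the first term, by the definition of $T_f$, equals $dT_f(r)+d\|f(0)\|$. For the second term, the definition of the Weil function gives
\[
d\|f(r\theta)\|-P(f)(r\theta)=\lambda_{V_P}(f(r\theta))-\|a\|,
\]
so its normalized spherical average equals $m_f(r,V_P)-\|a\|$. Substituting both identifications back into the Jensen identity and rearranging, the three bounded terms $P(f)(0)$, $d\|f(0)\|$, and $\|a\|$ combine into a single $O(1)$ error, producing the stated formula.

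There is no genuine obstacle: the argument is a clean combination of the higher-dimensional tropical Jensen formula, the identity $d\|f\|=\|f\|^{\otimes d}$ that turns the classical difference $d\|f\|-P(f)$ into the Weil function modulo a constant, and the definition of $T_f(r)$ as a normalized spherical average of $\|f\|$. The only care required is the bookkeeping of the additive constants so that their combination is explicitly bounded, which amounts to confirming that $\|a\|$, $P(f)(0)$, and $d\|f(0)\|$ are all finite real numbers independent of $r$.
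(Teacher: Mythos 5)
Your proposal is correct and follows essentially the same route as the paper: apply the higher-dimensional tropical Jensen formula \eqref{Jensen-1} to the tropical entire function $P\circ f$, split $P(f)(r\theta)=d\|f(r\theta)\|-\bigl(d\|f(r\theta)\|-P(f)(r\theta)\bigr)$, and identify the two spherical averages with $dT_f(r)+d\|f(0)\|$ and $m_f(r,V_P)-\|a\|$ via the definitions of $T_f$ and of the Weil function. Your bookkeeping of the constants $P(f)(0)$, $d\|f(0)\|$, and $\|a\|$ is in fact slightly more careful than the paper's own display, which absorbs them into $O(1)$ without comment.
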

	
	\subsection{Second main theorem for tropical hypersurfaces}\label{S3}
	
	Now we give the second main theorem for tropical hypersurfaces from higher dimension.\par
	
	\begin{theorem}\label{SMT}Let $q$ and $m$ be positive integers with $q\geq m.$ Let the tropical holomorphic curve $f: \mathbb{R}^n\rightarrow\mathbb{TP}^{m}$ be tropical algebraically nondegenerated. Assume that tropical hypersurfaces $V_{P_{j}}$ are defined by homogeneous tropical polynomials $P_{j}$ $(j=1, \ldots, q)$ with degree $d_{j},$ respectively, and $d=lcd(d_{1}, \ldots, d_{q})$ (the least common number). Let $M=(_d^{m+d})-1.$
		If $\lambda=ddg (\{P_{M+2}\circ f, \ldots, P_{q}\circ f\})$ and $\limsup_{r\rightarrow\infty}\frac{\log T_{f}(r)}{r}=0,$  then \begin{eqnarray*}
			(q-M-1-\lambda)T_{f}(r)&\leq&\sum_{j=1}^{q}\frac{1}{d_{j}}N\left(r, \frac{1_{\mathbb{T}}}{P_{j}\circ f}\oslash\right)+o(T_{f}(r))\\&&-\frac{1}{d}N\left(r, \frac{1_{\mathbb{T}}}{C_{o}\left(P_{1}^{\otimes \frac{d}{d_{1}}}\circ f, \ldots, P_{M+1}^{\otimes \frac{d}{d_{M+1}}}\circ f\right)}\oslash\right)\\
			&\leq&\sum_{j=M+2}^{q}\frac{1}{d_{j}}N\left(r, \frac{1_{\mathbb{T}}}{P_{j}\circ f}\oslash\right)+o(T_{f}(r))\\
			&\leq&(q-M-1)T_{f}(r)+o(T_{f}(r))
		\end{eqnarray*}
		where $r$ approaches infinity outside an exceptional set of zero upper density measure. \end{theorem}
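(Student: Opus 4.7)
My plan is to mirror the classical Cartan derivation, adapting the one-variable tropical Second Main Theorem of Korhonen--Tohge and Cao--Zheng to $\mathbb{R}^n$ by slicing along rays and averaging over $S^{n-1}(1)$ via Lemma \ref{L-10}, and by replacing the one-variable logarithmic derivative lemma with Theorem \ref{loga}. The FMT for hypersurfaces (Theorem \ref{T4}) gives, for each $j$, the identity $m_f(r,V_{P_j})+N(r,1_{\mathbb{T}}\oslash P_j\circ f)=d_jT_f(r)+O(1)$, so the statement is equivalent to an upper bound on $\sum_j\tfrac{1}{d_j}m_f(r,V_{P_j})$, and every step below is aimed at producing such a bound.

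\textbf{Step 1 (degree normalization).} Set $Q_j:=P_j^{\otimes d/d_j}$, a homogeneous tropical polynomial of common degree $d$. Then $g_j:=Q_j\circ f$ is tropical entire, $T(r,g_j)=dT_f(r)+O(1)$, and $N(r,1_{\mathbb{T}}\oslash Q_j\circ f)=\tfrac{d}{d_j}N(r,1_{\mathbb{T}}\oslash P_j\circ f)$. List the degree-$d$ multi-indices $I_0,\ldots,I_M$, so that $M+1=\binom{m+d}{d}$, and write each $g_j$ as a Gondran--Minoux tropical linear combination of the monomials $h_k:=f^{I_k}$. By Proposition \ref{P1} the family $\{h_0,\ldots,h_M\}$ is GM-linearly independent, so the situation reduces to a tropical Cartan-type problem for $g_1,\ldots,g_q\in\mathcal{\hat{L}}_G$.

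\textbf{Step 2 (reordering and the Casorati determinant).} Reindex so that $g_1,\ldots,g_{M+1}$ form a complete spanning system of $\mathcal{\hat{L}}_G$ and, among $g_{M+2},\ldots,g_q$, exactly $\lambda$ are non-complete. Fix a direction $c\in\mathbb{R}^n\setminus\{0\}$ whose slices $c_\theta$ make all relevant Casorati determinants complete (a generic $c$ works because the non-completeness locus is meagre in $\mathbb{R}^n$). Form $\hat{C}_o(f):=C_o(h_0,\ldots,h_M)$ via shifts $x,x+c,\ldots,x+Mc$. A tropical Cramer identity, derived slicewise from the one-variable version in \cite{book} and then transported back by $f(x)=f_\theta(\|x\|)$, expresses each $h_k$ as a tropical quotient of $\{\overline{g_j}^{[\pi(\cdot)]}\}_{j=1,\ldots,M+1}$ by $\hat{C}_o(f)$, giving the pointwise estimate
\[
(M+1)\|f(x)\|^{\otimes d}\leq \sum_{j=1}^{M+1}g_j(x)+\sum_{j,\pi}\bigl(\overline{g_j}^{[\pi]}(x)-g_j(x)\bigr)-\hat{C}_o(f)(x)+O(1).
\]

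\textbf{Step 3 (averaging and closing the argument).} Average this inequality over $S^{n-1}(1)$, apply Lemma \ref{L-10}, and use Theorem \ref{loga} combined with the slicewise bound $T(r,g_j)=O(T_f(r))$ to absorb each term $\frac{1}{\omega_n}\int_{S^{n-1}(1)}(\overline{g_j}^{[\pi]}-g_j)^+(r\theta)\,d\sigma(\theta)=m(r,\overline{g_j}^{[\pi]}\oslash g_j)=o(T_f(r))$ outside an exceptional set of zero upper density measure. After this absorption and an application of the FMT (Theorem \ref{T4}) to each $g_j$ with $1\leq j\leq M+1$, the comparison becomes
\[
\sum_{j=1}^{M+1}\tfrac{1}{d_j}N\!\left(r,\tfrac{1_{\mathbb{T}}}{P_j\circ f}\oslash\right)\leq \tfrac{1}{d}N\!\left(r,\tfrac{1_{\mathbb{T}}}{\hat{C}_o(Q_1\circ f,\ldots,Q_{M+1}\circ f)}\oslash\right)+o(T_f(r)),
\]
which is exactly the passage between the two displayed lines of the theorem. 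For each of the $q-M-1-\lambda$ complete forms among $g_{M+2},\ldots,g_q$, the same Cramer-type argument gives $m_f(r,V_{P_j})=o(T_f(r))$, so the FMT converts this into $\tfrac{1}{d_j}N(r,1_{\mathbb{T}}\oslash P_j\circ f)\geq T_f(r)+o(T_f(r))$; summing over the complete indices yields the left-hand inequality $(q-M-1-\lambda)T_f(r)\leq\sum_{j=M+2}^q\tfrac{1}{d_j}N(r,1_{\mathbb{T}}\oslash P_j\circ f)+o(T_f(r))$.

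\textbf{Main obstacle.} The delicate point is the tropical Cramer identity in Step 2: one must choose $c$ so that $\hat{C}_o(f)$ is actually complete and so that the identity remains pointwise valid after slicing. This relies on a careful tropical determinantal expansion in the max-plus semiring, handled slicewise as in the one-variable setting but now with a uniform choice of $c$ across all directions $\theta\in S^{n-1}(1)$. The secondary technical point is that the exceptional set in Theorem \ref{loga} is a priori $\theta$-dependent when applied slicewise; to avoid this we instead apply Theorem \ref{loga} directly in $\mathbb{R}^n$ to the full functions $g_j(x+c)\oslash g_j(x)$, whose characteristic $T(r,g_j)$ is comparable to $dT_f(r)$, so a single exceptional set of zero upper density measure serves all $j$ simultaneously.
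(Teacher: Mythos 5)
Your overall scaffolding (degree normalization via $P_j^{\otimes d/d_j}$, a Casorati determinant built from shifts $x, x+c,\ldots$, Theorem \ref{loga} applied directly in $\mathbb{R}^n$ so that one exceptional set serves all $j$, and Theorem \ref{T4} to convert proximity bounds into counting-function bounds) matches the paper. But the load-bearing step of your argument --- the ``tropical Cramer identity'' in Step 2 and the displayed pointwise inequality it is supposed to yield --- is a genuine gap, and you correctly sense this by calling it the main obstacle. In the max-plus semiring there is no subtraction, so the classical Cartan/Wronskian inversion (express each monomial $h_k=f^{I_k}$ as a quotient of determinants) does not transfer; no such identity is available in the paper or in the one-variable tropical literature it builds on. Your displayed inequality is also quantitatively inconsistent: for complete $g_j$ one has $\sum_{j=1}^{M+1}g_j(x)=(M+1)d\|f(x)\|+O(1)$, so your inequality would force $\hat{C}_o(f)(x)\le\sum_{j,\pi}\bigl(\overline{g_j}^{[\pi]}(x)-g_j(x)\bigr)+O(1)$, i.e.\ the Casorati determinant would have to be of proximity-function size, which fails because $\hat{C}_o(f)$ contains terms like $\overline{h_0}^{[0]}\otimes\cdots\otimes\overline{h_M}^{[M]}$ that grow like $\|f(x)\|$. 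A secondary problem: you reindex so that $g_1,\ldots,g_{M+1}$ are complete, but the theorem makes no such hypothesis, and the index set $\{M+2,\ldots,q\}$ in the conclusion is fixed by the definition of $\lambda$, so reindexing changes the statement.

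The paper's route avoids inversion entirely. The lower bound on $T_f(r)$ comes from the elementary observation that every \emph{complete} form $g_\nu$ among $g_{M+1},\ldots,g_{q-1}$ satisfies $g_\nu(x)\ge d\|f(x)\|+O(1)$ pointwise (it dominates the pure-power monomial $d f_{k^*}$ at the maximizing coordinate), so averaging gives $\frac{1}{\omega_n}\int_{S^{n-1}(1)}g_\nu(r\theta)\,d\sigma(\theta)\ge d\,T_f(r)+O(1)$ for each of the $q-M-1-\lambda$ complete forms, while non-complete forms contribute only a constant by convexity. The Casorati determinant then enters through the tropical factorization $\psi=\tilde L\otimes K$ with $\psi=g_{M+1}\otimes\cdots\otimes g_{q-1}$: the Jensen formula \eqref{Jensen-1} converts $\frac{1}{\omega_n}\int\psi$ into counting functions of $\tilde L$, the term $m(r,K)$ is absorbed by Theorem \ref{loga}, and the bound $N(r,1_{\mathbb{T}}\oslash C_o(g_0,\ldots,g_M))\ge\sum_{j=0}^{M}N(r,1_{\mathbb{T}}\oslash g_j)+o(T_f(r))$ follows from expanding $C_o$ as $D\otimes g_0\otimes\cdots\otimes g_M$ and estimating $D$ from below by a single permutation term. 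If you replace your Step 2 by this completeness/factorization argument (and note that your final claim $m_f(r,V_{P_j})=o(T_f(r))$ for complete $P_j$ is really the elementary two-sided bound $\min a+d\|f\|\le P_j(f)\le\max a+d\|f\|$ of Theorem \ref{C1.4}, not a Cramer-type fact), the rest of your outline closes correctly.
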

	
	\begin{proof} We divide two cases as follows.\par
		
		(i). We first assume that $d_{j}=d$ holds for all $j=1, \ldots, q$ and \begin{eqnarray*} P_{j}(x)=\bigoplus_{i_{j0}+\ldots+i_{jm}=d} a_{i_{j0}, \ldots, i_{jm}}\otimes x_{1}^{\otimes i_{j0}}\otimes\cdots\otimes x_{m+1}^{\otimes i_{jm}}=\bigoplus_{k=0}^{M} a_{I_{jk}}x^{I_{jk}},\end{eqnarray*} where $M:=(_d^{m+d})-1.$ Take $f^{I_i}=f_{0}^{\otimes i_{0}}\otimes\cdots\otimes f_{m}^{\otimes i_{m}}$ $(i=0, \ldots, M)$ which are still tropical entire functions on $\mathbb{R}^n.$ Since $f=(f_{0}, f_{1}, \ldots, f_{m})$ is tropical algebraically nondegenerated, it follows from Proposition \ref{P1} and Definition \ref{D1} that  $f=(f_{0}, \ldots, f_{m})$ is algebraically independent in the Gondran-Minoux sense, and thus $\tilde{F}=(f^{I_0}, f^{I_1}, \ldots, f^{I_M})$ is linearly independent in the Gondran-Minoux sense. Denote $g_{j-1}:=P_{j}\circ f$ for all $j=1,2, \ldots, q.$ By the properties of tropical Casorati determinant, it follows that
		\begin{eqnarray*}
			C_{o}(g_{0}, \ldots, g_{M})=g_{0}\otimes \overline{g_{0}}\otimes\cdots\otimes\overline{g_{0}}^{[M]}\otimes C_{o}(1_{\mathbb{T}}, g_{1}\oslash g_{0}, \ldots, g_{M}\oslash g_{0}).
		\end{eqnarray*}Set
		\begin{equation*}
			\tilde{L}:=\frac{g_{0}\otimes \overline{g_{1}}\otimes\cdots\otimes\overline{g_{M}}^{[M]}\otimes g_{M+1}\otimes\cdots\otimes g_{q-1}}{C_{o}(g_{0}, g_{1}, \ldots, g_{M})}\oslash
		\end{equation*}
		and $$\psi:=g_{M+1}\otimes\cdots\otimes g_{q-1}, $$ which satisfies
		\begin{eqnarray*}
			\psi=\tilde{L}\otimes K,
		\end{eqnarray*} where $K:=C_{o}\left(1_{\mathbb{T}}, g_{1}\oslash g_{0}, \ldots, g_{M}\oslash g_{0}\right)\otimes\left(\overline{g_{0}}\oslash\overline{g_{1}}\right)\otimes\cdots\otimes\left(\overline{g_{0}}^{[M]}\oslash\overline{g_{M}}^{[M]}\right).$ Denote $g_{\nu}$ $(M+1\leq\nu\leq q-1)$ to be
		\begin{equation*}
			g_{\nu}:=\bigoplus_{j\in S_{\nu}} b_{j\nu}\otimes f^{I_{j\nu}}(t\theta)=\max_{j\in S_{\nu}}\left\{b_{j\nu}+f^{I_{j\nu}}(t\theta)\right\}, b_{j\nu}\in \mathbb{R},
		\end{equation*} for  index sets $S_{\nu}\subset\{0, 1, \ldots, M\}$ with cardinality $\#S_{\nu}(\leq M+1).$ Then we have
      \begin{align}\label{E-a}
			&\sum_{\nu=M+1, \# S_{\nu}=M+1}^{q-1}\left(     \frac{1}{\omega_n} \int_{S^{n-1}(1)} 	g_{\nu}(r\theta)d\sigma(\theta)       \right)\\
			&=\sum_{\nu=M+1, \# S_{\nu}=M+1}^{q-1} \left(     \frac{1}{\omega_n} \int_{S^{n-1}(1)} 	\max_{j\in S_{\nu}}\left\{b_{j\nu}+f^{I_{j\nu}}(r\theta)\right\}d\sigma(\theta)       \right)\nonumber\\
			&\geq \sum_{\nu=M+1, \# S_{\nu}=M+1}^{q-1} \left(     \frac{1}{\omega_n} \int_{S^{n-1}(1)} 	\max_{j\in S_{\nu}}\left\{f^{I_{j\nu}}(r\theta)\right\}d\sigma(\theta)       \right)\nonumber\\&+\sum_{\nu=M+1, \# S_{\nu}=M+1}^{q-1}\min_{j\in S_{\nu}}\{b_{j\nu}\}\nonumber\\
			&= \sum_{\nu=M+1, \# S_{\nu}=M+1}^{q-1} \frac{1}{\omega_n} \int_{S^{n-1}(1)} \left(\max_{j_{\nu 0}+\ldots+j_{\nu n}=d}\{j_{\nu 0}f_{0}(r\theta)+\ldots+j_{\nu n}f_{n}(r\theta)\}\right)d\sigma(\theta) \nonumber\\&
			+\sum_{\nu=M+1, \# S_{\nu}=M+1}^{q-1}\min_{j\in S_{\nu}}\{b_{j\nu}\}\nonumber\\
			&\geq \sum_{\nu=M+1, \# S_{\nu}=M+1}^{q-1}\frac{d}{\omega_n} \int_{S^{n-1}(1)}\left(\max_{j=0}^{m}\{f_{j}(r\theta)\}
			\right)d\sigma(\theta)\nonumber\\&+\sum_{\nu=M+1, \# S_{\nu}=M+1}^{q-1}\min_{j\in S_{\nu}}\{b_{j\nu}\}\nonumber.\end{align}
		
		Since $\lambda=ddg (\{P_{M+2}\circ f, \ldots, P_{q}\circ f\})$ is the number of its non-complete elements, it means that there exist $q-M-1-\lambda$ complete elements in the set $\{P_{M+2}\circ f, \ldots, P_{q}\circ f\}.$ Since for fixed $\theta\in S^{n-1}(1),$ tropical entire functions $(g_{\nu})_{\theta}(t)$ $(M+1\leq\nu\leq q-1)$ can be regard as
		piecewise linear real functions on $\mathbb{R},$ so there exist $\alpha_{\nu}, \beta_{\nu}\in\mathbb{R}$ and an interval $[r_{1}, r_{2}]\subset\mathbb{R}$ containing the origin such that $r_{1}<r_{2}$ and
		\begin{equation*}
			(g_{\nu})_{\theta}(t)=(\alpha_{\nu})_{\theta}(t)+   \beta_{\nu}.
		\end{equation*}Then we get that
		\begin{eqnarray*}
			(g_{\nu})_{\theta}(0)=\beta_{\nu}=\max_{j\in S_{\nu}}\{b_{j\nu}+f^{I_{j\nu}}(0)\}.
		\end{eqnarray*} If define \begin{equation*}
			(h_{\nu})_{\theta}(t):=(\alpha_{\nu})_{\theta}(t)+\beta_{\nu}
		\end{equation*} for all $\theta$ and all $t\in \mathbb{R},$ then by the convexity of the graph of $(g_{\nu})_{\theta}$ we get that
		\begin{equation*}
			(g_{\nu})_{\theta}(t)\geq (h_{\nu})_{\theta}(t)
		\end{equation*}for all $x\in\mathbb{R}.$ Hence
		\begin{eqnarray*}
			\left(     \frac{1}{\omega_n} \int_{S^{n-1}(1)} 	g_{\nu}(t\theta)d\sigma(\theta)       \right)&\geq& \left(     \frac{1}{\omega_n} \int_{S^{n-1}(1)} 	h_{\nu}(t\theta)d\sigma(\theta) \right)\\&\geq& \left(     \frac{1}{\omega_n} \int_{S^{n-1}(1)} 	\beta_{\nu}d\sigma(\theta) \right)\\
			&=&\beta_{\nu}.
		\end{eqnarray*} This gives that
		\begin{align}\label{E-b}
			&\frac{1}{\omega_n} \int_{S^{n-1}(1)} \psi(r\theta)d\sigma(\theta)=\sum_{\nu=M+1}^{q-1}\left(    \frac{1}{\omega_n} \int_{S^{n-1}(1)}  g_{\nu}(r\theta)d\sigma(\theta)         \right)\nonumber \\
			=&  \sum_{\nu=M+1,\# S_{\nu}=M+1}^{q-1}\left(    \frac{1}{\omega_n} \int_{S^{n-1}(1)}  g_{\nu}(r\theta)d\sigma(\theta)         \right)\nonumber\\&+\sum_{\nu=M+1, \# S_{\nu}<M+1}^{q-1}\left(    \frac{1}{\omega_n} \int_{S^{n-1}(1)}  g_{\nu}(r\theta)d\sigma(\theta)         \right)\nonumber\\
			\geq&\sum_{\nu=M+1, \# S_{\nu}=M+1}^{q-1}\left(    \frac{1}{\omega_n} \int_{S^{n-1}(1)}  g_{\nu}(r\theta)d\sigma(\theta)         \right)+\sum_{\nu=M+1, \# S_{\nu}<M+1}^{q-1}\beta_{\nu}.\end{align}\par
		
		According to the definition of tropical Cartan's characteristic function,
		\begin{eqnarray*}
			T_{f}(r)+\max_{j=0, 1, \ldots, m}\{f_{j}(0)\}= \frac{1}{\omega_n} \int_{S^{n-1}(1)} 	\max_{j=0, 1, \ldots, m}\left\{f_j(r\theta)\right\}d\sigma(\theta).
		\end{eqnarray*}
		Then it follows from \eqref{E-a} that
		\begin{eqnarray}\label{E-c}
			&&\sum_{\nu=M+1, \# S_{\nu}=M+1}^{q-1}\left(    \frac{1}{\omega_n} \int_{S^{n-1}(1)}  g_{\nu}(r\theta)d\sigma(\theta)         \right)\\\nonumber
			&\geq&\sum_{\nu=M+1, \# S_{\nu}=M+1}^{q-1}d\left(T_{f}(r)+\max_{j=0}^{m}\{f_{j}(0)\}\right)+\sum_{\nu=M+1, \# S_{\nu}=M+1}^{q-1}\min_{j\in S_{\nu}}\{b_{j\nu}\}\\\nonumber
			&=&(q-M-1-\lambda)d\left(T_{f}(r)+\max_{j=0}^{m}\{f_{j}(0)\}\right)+\sum_{\nu=M+1, \# S_{\nu}=M+1}^{q-1}\min_{j\in S_{\nu}}\{b_{j\nu}\}.
		\end{eqnarray} Therefore, combining \eqref{E-b} and \eqref{E-c} gives that
		\begin{eqnarray*}
			&&\frac{1}{\omega_n} \int_{S^{n-1}(1)} \psi_{\nu}(r\theta)d\sigma(\theta)
			\\&\geq&(q-M-1-\lambda)d\left(T_{f}(r)+\max_{j=0}^{m}\{f_{j}(0)\}\right)\\&&+\sum_{\nu=M+1, \# S_{\nu}=M+1}^{q-1}\min_{j\in S_{\nu}}\{b_{j\nu}\}+\sum_{\nu=M+1, \# S_{\nu}<M+1}^{q-1}\beta_{\nu},
		\end{eqnarray*}
		which implies an inequality of characteristic function $T_{f}(r)$ as follows
		\begin{eqnarray}\label{E-d}&&
			(q-M-1-\lambda)T_{f}(r)\\\nonumber &\leq& \frac{1}{d}\left(\frac{1}{\omega_n} \int_{S^{n-1}(1)} \psi(r\theta)d\sigma(\theta)   \right)+\frac{1}{d}\sum_{\nu=M+1, \# S_{\nu}=M+1}^{q-1}\min_{j\in S_{\nu}}\{b_{j\nu}\}\\\nonumber&&+\frac{1}{d}\sum_{\nu=M+1, \# S_{\nu}<M+1}^{q-1}\beta_{\nu}+ (q-M-1-\lambda)\max_{j=0}^{n}\{f_{j}(0)\}.
		\end{eqnarray}
		
		Next we need obtain an estimation on the first term of the right side of \eqref{E-d}.  By the tropical Jensen formula \eqref{Jensen-1} and the definition of $\psi,$ we deduce that
		\begin{eqnarray*}&&
			\frac{1}{\omega_n} \int_{S^{n-1}(1)} \psi(r\theta)d\sigma(\theta)   \\&=&\frac{1}{\omega_n} \int_{S^{n-1}(1)} \tilde{L}(r\theta)d\sigma(\theta)   +\frac{1}{\omega_n} \int_{S^{n-1}(1)} K(r\theta)d\sigma(\theta)   \\
			&=&N(r, 1_{\mathbb{T}}\oslash \tilde{L})-N(r, \tilde{L})+\tilde{L}(0)+\frac{1}{\omega_n} \int_{S^{n-1}(1)} K^+(r\theta)d\sigma(\theta)\\&& -\frac{1}{\omega_n} \int_{S^{n-1}(1)} (-K)^+(r\theta)d\sigma(\theta) \\
			&=&N(r, 1_{\mathbb{T}}\oslash \tilde{L})-N(r, \tilde{L})+\tilde{L}(0)+m(r, K)-m(r, 1_{\mathbb{T}}\oslash K)\\
			&\leq&N(r, 1_{\mathbb{T}}\oslash \tilde{L})-N(r, \tilde{L})+\tilde{L}(0)+m(r, K).
		\end{eqnarray*}
		Denote \begin{equation*}
			L=\frac{g_{0}\otimes g_{1}\otimes\cdots\otimes g_{M}\otimes\cdots\otimes g_{q-1}}{C_{o}(g_{0}, g_{1}, \ldots, g_{M})}\oslash,
		\end{equation*} which gives
		\begin{equation*}
			\tilde{L}=L\otimes \frac{\overline{g_{1}}\otimes \overline{g_{2}}^{[2]}\otimes\cdots\otimes \overline{g_{M}}^{[M]}}{g_{1}\otimes g_{2}\otimes\cdots\otimes g_{M}}\oslash.
		\end{equation*}
		Then it follows from the above inequalities that
		\begin{eqnarray}\label{E-e}&&
			\frac{1}{\omega_n} \int_{S^{n-1}(1)} \psi(r\theta)d\sigma(\theta)\\\nonumber&\leq&N(r, 1_{\mathbb{T}}\oslash \tilde{L})-N(r, \tilde{L})+L(0)+\sum_{j=0}^{M}g_{j}(j\theta)-\sum_{j=0}^{M}g_{j}(0)+m(r, K).
		\end{eqnarray}\par
		
		Below,	we will estimate $m(r, K)$ by making use of the tropical version of the logarithmic derivative lemma. Since $g_{j}\oslash g_{0}$ $(j\in\{1, 2, \ldots, M\})$ are tropical meromorphic functions, we have \begin{eqnarray*}
			&&T_{g_{j}\oslash g_{0}}(r)\\&=&       	\frac{1}{\omega_n} \int_{S^{n-1}(1)}    \max\{g_{j}( r\theta), g_{0}(r\theta)\}       d\sigma(\theta)-\max\{g_{j}(0), g_{0}(0)\}\\
			&=&      	\frac{1}{\omega_n} \int_{S^{n-1}(1)}   \max_{j_{0}+\ldots+j_{m}=d}\{c_{j_0, \ldots, j_{m}}   +j_{0}f_{0}(r\theta)+\ldots+j_{m}f_{m}(r\theta)\}   d\sigma(\theta)\\&&   -\max\{g_{j}(0), g_{0}(0)\}\\
			&=&	\frac{1}{\omega_n} \int_{S^{n-1}(1)}  \max_{j_{0}+\ldots+j_{m}=d}\{j_{0}f_{0}(r\theta)+\ldots+j_{m}f_{m}(r\theta)\}   d\sigma(\theta)\\&&-\max\{g_{j}(0), g_{0}(0)\}+\max_{j_{0}+\ldots+j_{m}=d}\{c_{j_0, \ldots, j_{m}}\}   \\
			&\leq&\frac{1}{\omega_n} \int_{S^{n-1}(1)}  d\max\{f_{0}(r\theta ), f_{1}(r\theta),\ldots, f_{m}(r\theta)\}   d\sigma(\theta)\\&&-\max\{g_{j}(0), g_{0}(0)\}+\max_{j_{0}+\ldots+j_{m}=d}\{c_{j_0, \ldots, j_{m}}\}\\
			&\leq&dT_{f}(r)+d\max\{f_{0}(0), f_{1}(0), \ldots, f_{m}(0)\}-\max\{g_{j}(0), g_{0}(0)\}\\&&+\max_{j_{0}+\ldots+j_{m}=d}\{c_{j_0, \ldots, j_{m}}\}.
		\end{eqnarray*}
		This implies that \begin{eqnarray*} \limsup_{r\rightarrow \infty}\frac{\log T_{g_{j}\oslash g_{0}}(r)}{r}\leq\limsup_{r\rightarrow\infty}\frac{\log T_{f}(r) }{r}=0,\end{eqnarray*}
		and then by Lemma \ref{newloga} we get that  for any $k\in \mathbb{N},$
		\begin{eqnarray*}
			T_{\overline{g_{j}\oslash g_{0}}^{[k]}}(r)=(1+\varepsilon(r))
			T_{g_{j}\oslash g_{0}}(r)=T_{f}(r)+o(T_{f}(r))
		\end{eqnarray*} holds for all $r\not\in E$ with $\overline{dens} E=0$ (throughout this proof, the notation $E$ always means having the property $\overline{dens} E=0$). Therefore, for any $k\in \mathbb{N}$ \begin{eqnarray*}\limsup_{r\rightarrow \infty}\frac{\log T_{\overline{g_{j}\oslash g_{0}}^{[k]}}(r)}{r}\leq\limsup_{r\rightarrow\infty}\frac{\log T_{f}(r) }{r}=0.\end{eqnarray*}
		Note that
		\begin{eqnarray*}
			K&=&C_{o}(1_{\mathbb{T}}, g_{1}\oslash g_{0}, \ldots, g_{M}\oslash g_{0})\otimes(\overline{g_{0}}\oslash\overline{g_{1}})\otimes\cdots\otimes(\overline{g_{0}}^{[M]}\oslash\overline{g_{M}}^{[M]})\\
			&=&\frac{\bigoplus(\overline{g_{1}}^{[\pi(0)]}\oslash \overline{g_{0}}^{[\pi(0)]})\otimes \ldots\otimes (\overline{g_{M}}^{[\pi(M)]}\oslash \overline{g_{0}}^{[\pi(M)]})}{(\overline{g_{1}}\oslash\overline{g_{0}})\otimes\cdots\otimes(\overline{g_{M}}^{[M]}\oslash\overline{g_{0}}^{[M]})}\oslash\\
			&=&\bigoplus\left(\frac{\left(\overline{g_{1}\oslash g_{0}}\right)^{[\pi(0)]}}{\overline{g_{1}\oslash g_{0}}}\oslash\right) \otimes \ldots\otimes \left(\frac{\left(\overline{g_{M}\oslash g_{0}}\right)^{[\pi(M)]}}{\overline{g_{M}\oslash g_{0}}^{[M]}}\oslash\right)
		\end{eqnarray*}
		where the tropical sum is taken over all permutations $\{\pi(0), \ldots, \pi(M)\}$ of the set $\{0, 1, \ldots, M\}.$ Now by the tropical version of the logarithmic derivative lemma (Theorem \ref{loga}), we obtain that \begin{eqnarray}\label{E-f}
			m(r, K)=o(T_{f}(r))
		\end{eqnarray}holds for all $r\not\in E$ with $\overline{dens}E=0.$\par
		
		Therefore, it follows from \eqref{E-d}, \eqref{E-e} and \eqref{E-f} that
		\begin{eqnarray}\label{E-g}
			(q-M-1-\lambda)T_{f}(r)&\leq&\frac{1}{d}N(r, 1_{\mathbb{T}}\oslash \tilde{L})-\frac{1}{d}N(r, \tilde{L})+o(T_{f}(r))
		\end{eqnarray}for all $r\not\in E$ with $\overline{dens}E=0.$\par
		
		The next step is to estimate $N(r, 1_{\mathbb{T}}\oslash \tilde{L})$ and $N(r, \tilde{L}).$ Note that
		\begin{equation*}
			\tilde{L}=L\otimes \frac{\overline{g_{1}}\otimes \overline{g_{2}}^{[2]}\otimes\cdots\otimes \overline{g_{M}}^{[M]}}{g_{1}\otimes g_{2}\otimes\cdots\otimes g_{M}}\oslash
		\end{equation*} and that $g_{1}, \ldots, g_{M}$ are tropical entire functions. Then by the tropical Jensen formula \eqref{Jensen-1},
		\begin{eqnarray}\label{E-h}
			&&N(r, 1_{\mathbb{T}}\oslash\tilde{L})-N(r, \tilde{L})\\\nonumber
			&=& \frac{1}{\omega_n} \int_{S^{n-1}(1)}  \tilde{L}(r \theta)  d\sigma(\theta) -\tilde{L}(0)    \\\nonumber
			&=&	\frac{1}{\omega_n} \int_{S^{n-1}(1)}  L(r \theta)        d\sigma(\theta)  -L(0)\\\nonumber&&+\sum_{j=1}^{M}\left(   \frac{1}{\omega_n} \int_{S^{n-1}(1)}   \overline{g_{j}}^{[j]}(r \theta)    d\sigma(\theta)+\overline{g_{j}}^{[j]}(0)\right)\\\nonumber&&-\sum_{j=1}^{M}   \left(\frac{1}{\omega_n}g_{j}(r \theta)   d\sigma(\theta)+g_{j}(0)\right)\\\nonumber
			&=&N(r, 1_{\mathbb{T}}\oslash L)-N(r, L)+\sum_{j=1}^{M}\left(N(r, 1_{\mathbb{T}}\oslash\overline{g_{j}}^{[j]})-N(r, 1_{\mathbb{T}}\oslash g_{j})\right)\\\nonumber
			&\leq&N(r, 1_{\mathbb{T}}\oslash L)-N(r, L)+\sum_{j=1}^{M}\left(N(r+jc, 1_{\mathbb{T}}\oslash g_{j})-N(r, 1_{\mathbb{T}}\oslash g_{j})\right),
		\end{eqnarray}  where the last equality follows from the translation invariance of the counting function:
		since $\overline{g_{j}}^{[j]} = g_j(x + jc)$, the poles/zeros of $\overline{g_{j}}^{[j]}$ in $B_r$ correspond bijectively
		to those of $g_j$ in $B_{r+jc}$ with the same multiplicities.
		Using the tropical Jensen formula \eqref{Jensen-1} again, we deduce that
		\begin{eqnarray*}
			&&N(r, 1_{\mathbb{T}}\oslash g_{j})\\&=&  \frac{1}{\omega_n} \int_{S^{n-1}(1)} g_{j}( r\theta) d\sigma(\theta)  -g_{j}(0)\\
			&=& \frac{1}{\omega_n} \int_{S^{n-1}(1)} \max_{j_{0}+\ldots+j_{m}=d}\{c_{j_0, \ldots, j_{m}}+j_{0}f_{0}( r\theta)   +\ldots+j_{m}f_{m}( r\theta)\}  d\sigma(\theta) -g_{j}(0)\\
			&=& \frac{1}{\omega_n} \int_{S^{n-1}(1)}\max_{j_{0}+\ldots+j_{m}=d}\{j_{0}f_{0}( r\theta)+\ldots+j_{m}f_{m}( r\theta)\}d\sigma(\theta)\\&&-g_{j}(0)+\max_{j_{0}+\ldots+j_{m}=d}\{c_{j_0, \ldots, j_{n}}\}\\
			&\leq&\frac{d}{\omega_n} \int_{S^{n-1}(1)}\max\{f_{0}( r\theta), f_{1}( r\theta),\ldots, f_{m}( r\theta)\} d\sigma(\theta) \\&&-g_{j}(0)+\max_{j_{0}+\ldots+j_{m}=d}\{c_{j_0, \ldots, j_{m}}\}\\
			&\leq&dT_{f}(r)+d\max\{f_{0}(0), f_{1}(0), \ldots, f_{m}(0)\}-g_{j}(0)\\&&+\max_{j_{0}+\ldots+j_{m}=d}\{c_{j_0, \ldots, j_{m}}\},
		\end{eqnarray*}
		This implies \begin{eqnarray*}
			\limsup_{r\rightarrow\infty} \frac{\log N(r, 1_{\mathbb{T}}\oslash g_{j})}{r}\leq \limsup_{r\rightarrow\infty} \frac{\log T_{f}(r) }{r}=0.\end{eqnarray*}
		Hence by Lemma \ref{newloga},  \begin{eqnarray}\label{E1}&&\\\nonumber&&
			N(r+jc, 1_{\mathbb{T}}\oslash g_{j})-N(r, 1_{\mathbb{T}}\oslash g_{j})=\varepsilon(r)N(r, 1_{\mathbb{T}}\oslash g_{j})= o(T_{f}(r))
		\end{eqnarray}holds for $r\not\in E$ with $\overline{dens}E=0.$ Therefore we get from \eqref{E-h} and \eqref{E1} that  \begin{eqnarray*}
			N(r, 1_{\mathbb{T}}\oslash\tilde{L})-N(r, \tilde{L})
			\leq N(r, 1_{\mathbb{T}}\oslash L)-N(r, L)+o(T_{f}(r))
		\end{eqnarray*}holds for $r\not\in E$ with $\overline{dens}E=0.$
		Combining this with \eqref{E-g} gives
		\begin{eqnarray}\label{E-i}
			(q-M-1-\lambda)T_{f}(r)&\leq&\frac{1}{d}N(r, 1_{\mathbb{T}}\oslash L)-\frac{1}{d}N(r, L)+o(T_{f}(r)).
		\end{eqnarray}for all $r\not\in E$ with $\overline{dens}E=0.$\par
		
		Note that $g_{j}$ $(j=0, \ldots, q-1)$ and $C_{o}(g_{0}, \ldots, g_{M})$ are all tropical entire functions. Then according to the definition of $L,$ we can get from the tropical Jensen formula that
		\begin{eqnarray}\label{E-j}&&
			N(r, 1_{\mathbb{T}}\oslash L)-N(r, L)\\\nonumber&=&\frac{1}{\omega_n} \int_{S^{n-1}(1)}L(r\theta)  d\sigma(\theta)-L(0)\\\nonumber
			&=&\sum_{j=0}^{q-1}\left(\frac{1}{\omega_n} \int_{S^{n-1}(1)}g_{j}(r\theta)  d\sigma(\theta)-g_{j}(0)\right)\\\nonumber
			&&-\left(\frac{1}{\omega_n} \int_{S^{n-1}(1)}C_{o}(g_{0}, \ldots, g_{M})(r\theta)  d\sigma(\theta)-C_{o}(g_{0}, \ldots, g_{M})(0)\right)\\\nonumber
			&=&\sum_{j=0}^{q-1}N(r, 1_{\mathbb{T}}\oslash g_{j})-N(r, 1_{\mathbb{T}}\oslash C_{o}(g_{0}, \ldots, g_{M})).
		\end{eqnarray}
		Now combining \eqref{E-i} and \eqref{E-j}, we get the estimation form of the second main theorem that  \begin{eqnarray}\label{E6.4}
			&&(q-M-1-\lambda)T_{f}(r)\\\nonumber
			&\leq&\frac{1}{d}\sum_{j=0}^{q-1}N(r, 1_{\mathbb{T}}\oslash g_{j})-\frac{1}{d}N(r, 1_{\mathbb{T}}\oslash C_{o}(g_{0}, \ldots, g_{M}))+o(T_{f}(r)).
		\end{eqnarray}for all $r\not\in E$ with $\overline{dens}E=0.$\par
		
		Now we will estimate $N(r, 1_{\mathbb{T}}\oslash C_{o}(g_{0}, \ldots, g_{M})).$ According to the definition of tropical Casorati determinant, we have
		\begin{eqnarray}\label{E6.1}
			&&C_{o}(g_{0}, \ldots, g_{M})\\\nonumber
			&=&\bigoplus(\overline{g_{0}}^{[\pi(0)]}\otimes \cdots \otimes \overline{g_{M}}^{[\pi(M)]})\\\nonumber
			&=&\left\{\bigoplus\left[\left(\overline{g_{0}}^{[\pi(0)]}\otimes \cdots \otimes \overline{g_{M}}^{[\pi(M)]}\right)\oslash\left(g_{0} \otimes \cdots \otimes g_{M}\right)\right]\right\}+g_{0}\otimes \cdots \otimes g_{M}\\\nonumber
			&=&\bigoplus\left[\left(\overline{g_{0}}^{[\pi(0)]}\oslash g_{0}\right) \otimes \cdots \otimes \left(\overline{g_{M}}^{[\pi(M)]}\oslash g_{M}\right)\right]+ g_{0}\otimes \cdots \otimes g_{M},\end{eqnarray} where the sum is taken over all permutations $\{\pi(0), \ldots, \pi(M)\}$ of $\{0, 1, \ldots, M\}.$ If denote $$D:=\bigoplus\left[\left(\overline{g_{0}}^{[\pi(0)]}\oslash g_{0}\right) \otimes \cdots \otimes \left(\overline{g_{M}}^{[\pi(M)]}\oslash g_{M}\right)\right],$$ then \begin{eqnarray*} D\geq \left(\overline{g_{0}}^{[\pi(0)]}\oslash g_{0}\right) \otimes \cdots \otimes \left(\overline{g_{M}}^{[\pi(M)]}\oslash g_{M}\right)\end{eqnarray*} for any  permutation $\{\pi(0), \ldots, \pi(M)\}$ of $\{0, 1, \ldots, M\}.$ By the tropical Jensen formula \eqref{Jensen-1} and \eqref{E1}, we have
		\begin{align}\label{E6.2}
        & \frac{1}{\omega_n} \int_{S^{n-1}(1)}D( r\theta) d\sigma(\theta)-D(0)\\\nonumber
			&\geq \sum_{j=0}^{M}\left( \frac{1}{\omega_n} \int_{S^{n-1}(1)} (\overline{g_{j}}^{[\pi(j)]}\oslash g_{j})(r \theta) d\sigma(\theta)\right)-D(0)\\\nonumber
			&=\sum_{j=0}^{M}\left( \frac{1}{\omega_n} \int_{S^{n-1}(1)}\overline{g_{j}}^{[\pi(j)]}(r \theta) d\sigma(\theta)- \frac{1}{\omega_n} \int_{S^{n-1}(1)}g_{j}(r \theta) d\sigma(\theta)\right)-D(0)\\\nonumber
			&= \sum_{j=0}^{M}\left(N(r, 1_{\mathbb{T}}\oslash(\overline{g_{j}}^{[\pi(j)]}))-N(r, 1_{\mathbb{T}}\oslash g_{j})\right)-D(0)+\sum_{j=0}^{M}\left(\overline{g_{j}}^{[\pi(j)]}(0)-g_{j}(0)\right)\\\nonumber
			&= \sum_{j=0}^{M}\left(N(r+jc, 1_{\mathbb{T}}\oslash g_{j})-N(r, 1_{\mathbb{T}}\oslash g_{j})\right)-D(0)+\sum_{j=0}^{M}\left(\overline{g_{j}}^{[\pi(j)]}(0)-g_{j}(0)\right)\\\nonumber
			&=o(T_{f}(r))
		\end{align} for $r\not\in E$ with $\overline{dens}E=0.$
		Hence using the tropical Jensen formula again, it gives by \eqref{E6.1} and \eqref{E6.2} that
		\begin{align}\label{E6.3}
			&N(r, 1_{\mathbb{T}}\oslash C_{o}(g_{0},\ldots, g_{M}))\\\nonumber
			&=\frac{1}{\omega_n} \int_{S^{n-1}(1)} C_{o}(g_{0}, \ldots, g_{M})(r\theta)d\sigma(\theta)-C_{o}(g_{0},\ldots, g_{M})(0)\\\nonumber
			&=\frac{1}{\omega_n} \int_{S^{n-1}(1)}D(r\theta)d\sigma(\theta)-D(0)+\sum_{j=0}^{M}\left(\frac{1}{\omega_n} \int_{S^{n-1}(1)}g_{j}(r\theta)d\sigma(\theta)-g_{j}(0)\right)\\\nonumber
			&=\frac{1}{\omega_n} \int_{S^{n-1}(1)} D(r\theta)d\sigma(\theta)-D(0)+\sum_{j=0}^{M}N(r, 1_{\mathbb{T}}\oslash g_{j})\\\nonumber
			&\geq o(T_{f}(r))+\sum_{j=1}^{M+1}N(r, \frac{1_{\mathbb{T}}}{P_{j}\circ f}\oslash)
		\end{align}holds for $r\not\in E$ with $\overline{dens}E=0.$ Submitting \eqref{E6.3} into \eqref{E6.4} gives that
		\begin{eqnarray*}&&(q-M-1-\lambda)T_{f}(r)\\
			&\leq&\frac{1}{d}\sum_{j=0}^{q-1}N(r, 1_{\mathbb{T}}\oslash g_{j})-\frac{1}{d}N(r, 1_{\mathbb{T}}\oslash C_{o}(g_{0}, \ldots, g_{M}))+o(T_{f}(r))\\
			&\leq&\frac{1}{d}\sum_{j=M+2}^{q}N(r, \frac{1_{\mathbb{T}}}{P_{j}\circ f}\oslash)+o(T_{f}(r))\\
		\end{eqnarray*}where $r$ approaches infinity outside an exceptional set of zero upper density measure.\par
		
		(ii). We now consider general case whenever the degree of homogeneous polynomials $P_{j}$ $(j=1, \ldots, q)$ are $d_{j}$ respectively. Assume that
		\begin{eqnarray*} P_{j}(x)&=&\bigoplus_{i_{j0}+\ldots+i_{jm}=d_{j}} a_{i_{j0}, \ldots, i_{jm}}\otimes x_{1}^{\otimes i_{j0}}\otimes\cdots\otimes x_{m+1}^{\otimes i_{jm}}\\
			&=&\max_{i_{j0}+\ldots+i_{jm}=d_{j}}\{a_{i_{j0}, \ldots, i_{jm}}+ i_{j0}x_{1}+\cdots+ i_{jm}x_{m+1}\}.\end{eqnarray*}
		Then \begin{eqnarray*} P_{j}^{\otimes \frac{d}{d_{j}}}(x)&=&\frac{d}{d_{j}}P_{j}(x)\\
			&=&\frac{d}{d_{j}}\bigoplus_{i_{j0}+\ldots+i_{jm}=d_{j}} a_{i_{j0}, \ldots, i_{jm}}\otimes x_{1}^{\otimes i_{j0}}\otimes\cdots\otimes x_{m+1}^{\otimes i_{jm}}\\
			&=&\max_{i_{j0}+\ldots+i_{jm}=d_{j}}\{\frac{d}{d_{j}}a_{i_{j0}, \ldots, i_{jm}}+ \frac{d}{d_{j}}i_{j0}x_{1}+\cdots+ \frac{d}{d_{j}}i_{jm}x_{m+1}\}\\
			&=&\bigoplus_{\frac{d}{d_{j}}i_{j0}+\ldots+\frac{d}{d_{j}}i_{jm}=d
			}(\frac{d}{d_{j}}a_{i_{j0}, \ldots, i_{jm}})\otimes x_{1}^{\otimes \frac{d}{d_{j}}i_{j0}}\otimes\cdots\otimes x_{m+1}^{\otimes \frac{d}{d_{j}}i_{jm}}.
		\end{eqnarray*} Thus all $P_{j}^{\otimes \frac{d}{d_{j}}}(x)$ are of degree $d.$  Furthermore, we can see that if $x_{0}$ is a root of the tropical entire function $P_{j}\circ f$ with multiplicity $	 	\nu_{1_{\mathbf{T}} \oslash\left( P_{j}\circ f\right)}(x_{0})>0,$  then $x_{0}$ should be also a root of $P_{j}^{\otimes \frac{d}{d_{j}}}\circ f$ $(=\frac{d}{d_{j}}P_{j}\circ f)$ with multiplicity $     	\nu_{1_{\mathbf{T}} \oslash \left( P_{j}^{\otimes \frac{d}{d_{j}}}\circ f \right)}(x_{0})>0            .$ The inverse is also true. This implies that \begin{eqnarray*}N(r, \frac{1_{\mathbb{T}}}{P_{j}^{\otimes \frac{d}{d_{j}}}\circ f}\oslash)=\frac{d}{d_{j}}N(r, \frac{1_{\mathbb{T}}}{P_{j}\circ f}\oslash).\end{eqnarray*} Hence by the conclusion (i), we have
		\begin{eqnarray*}&&
			(q-M-1-\lambda)T_{f}(r)\\&\leq&\frac{1}{d}\sum_{j=1}^{q}N(r, \frac{1_{\mathbb{T}}}{P_{j}^{\otimes \frac{d}{d_{j}}}\circ f}\oslash)-\frac{1}{d}N(r, \frac{1_{\mathbb{T}}}{C_{o}(P_{1}^{\otimes \frac{d}{d_{1}}}\circ f, \ldots, P_{M+1}^{\otimes \frac{d}{d_{M+1}}}\circ f)}\oslash)\\&&+o(T_{f}(r))\\
			&\leq&\frac{1}{d}\sum_{j=M+2}^{q}N(r, \frac{1_{\mathbb{T}}}{P_{j}^{\otimes \frac{d}{d_{j}}}\circ f}\oslash)+o(T_{f}(r))\\
			&=&\sum_{j=M+2}^{q}\frac{1}{d_{j}}N(r, \frac{1_{\mathbb{T}}}{P_{j}\circ f}\oslash)+o(T_{f}(r))
		\end{eqnarray*}where $r$ approaches infinity outside an exceptional set of finite upper density measure. By the first main theorem (Theorem \ref{T4}) we have
		$$N(r, \frac{1_{\mathbb{T}}}{P_{j}\circ f}\oslash)\leq d_{j} T_{f}(r)$$ for all $j=1, \ldots, q.$ Therefore, the theorem is proved immediately.\end{proof}
	
	\begin{definition} The defect of a tropical holomorphic map $f: \mathbb{R}^{n}\rightarrow\mathbb{TP}^{m}$ intersecting a tropical hypersurface $V_{P}$ given by a tropical polynomial $P$ with degree $d$ on $\mathbb{R}^{m+1}$ is defined by
		\begin{eqnarray*}
			\delta_{f}(V_{P}):=\liminf_{r\rightarrow\infty} \frac{m_{f}(r, V_{P})}{d T_{f}(r)}=1-\limsup_{r\rightarrow\infty} \frac{N(r, \frac{1_{\mathbb{T}}}{P\circ f}\oslash)}{d T_{f}(r)}.
		\end{eqnarray*}
	\end{definition} Then by Theorem \ref{SMT}, we obtain immediately the following defect relation.\par
	
	\begin{corollary}\label{defect} Let $q$ and $n$ be positive integers with $q\geq n.$ Let the tropical holomorphic map $f: \mathbb{R}^{n}\rightarrow\mathbb{TP}^{m}$ be tropical algebraically  nondegenerated. Assume that $P_{j}$ $(j=1, \ldots, q)$ are homogeneous tropical polynomials with degree $d_{j},$ and $d$ are the least common number of $d_{1}, \ldots, d_{q}.$ Let $M=(_d^{m+d})-1.$
		If $\lambda=ddg (\{P_{M+2}\circ f, \ldots, P_{q}\circ f\})$ and $\limsup_{r\rightarrow\infty}\frac{\log T_{f}(r)}{r}=0,$  then
		\begin{equation*}\sum_{j=1}^{q}\delta_{f}(V_{P_{j}})\leq M+1+\lambda,\,\, \mbox{and}\,\, \sum_{j=M+2}^{q}\delta_{f}(V_{P_{j}})\leq \lambda.\end{equation*} In special case whenever $\lambda=0,$ we get that $\delta_{f}(V_{P_{j}})=0$ for each $j\in\{M+2, \ldots,  q\}.$
	\end{corollary}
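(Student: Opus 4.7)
The plan is to derive both inequalities as direct consequences of combining the Second Main Theorem (Theorem~\ref{SMT}) with the First Main Theorem (Theorem~\ref{T4}). The key identity supplied by Theorem~\ref{T4} is
\[
\tfrac{1}{d_j}\,N(r, 1_{\mathbb{T}}\oslash P_j\circ f) = T_f(r) - \tfrac{1}{d_j}\,m_f(r, V_{P_j}) + O(1),
\]
which converts the counting-function upper bounds provided by the Second Main Theorem into proximity-function upper bounds, which is precisely the form the defect $\delta_f(V_{P_j}) = \liminf_{r\to\infty} m_f(r,V_{P_j})/(d_j T_f(r))$ demands.

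For the first inequality, I would start from the looser form of the Second Main Theorem, namely
\[
(q-M-1-\lambda)\,T_f(r) \leq \sum_{j=1}^{q} \tfrac{1}{d_j}\,N(r, 1_{\mathbb{T}}\oslash P_j\circ f) + o(T_f(r)),
\]
which is obtained by dropping the nonnegative Casorati counting term $\tfrac{1}{d}N(r, 1_{\mathbb{T}}\oslash C_o(\dots))$. Substituting the First Main Theorem identity on the right and collecting the $q$ copies of $T_f(r)$ yields
\[
\sum_{j=1}^{q} \tfrac{1}{d_j}\,m_f(r, V_{P_j}) \leq (M+1+\lambda)\,T_f(r) + o(T_f(r))
\]
for $r$ outside an exceptional set $E$ of zero upper density. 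Dividing by $T_f(r)$ and invoking the elementary inequality $\liminf_r \sum_j a_j(r) \geq \sum_j \liminf_r a_j(r)$ together with the monotonicity fact that $\liminf$ over the full ray is dominated by $\liminf$ over the cofinal subset $\mathbb{R}^+\setminus E$, I conclude $\sum_{j=1}^{q} \delta_f(V_{P_j}) \leq M+1+\lambda$.

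For the second inequality, I would argue in exactly the same manner, but starting from the sharper form
\[
(q-M-1-\lambda)\,T_f(r) \leq \sum_{j=M+2}^{q} \tfrac{1}{d_j}\,N(r, 1_{\mathbb{T}}\oslash P_j\circ f) + o(T_f(r))
\]
already present in Theorem~\ref{SMT}. Substituting the First Main Theorem identity into each of the $q-M-1$ terms on the right cancels the $(q-M-1)\,T_f(r)$ contributions and leaves
\[
\sum_{j=M+2}^{q} \tfrac{1}{d_j}\,m_f(r, V_{P_j}) \leq \lambda\,T_f(r) + o(T_f(r)),
\]
giving $\sum_{j=M+2}^{q} \delta_f(V_{P_j}) \leq \lambda$ after the same $\liminf$ step. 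The final assertion—that $\lambda=0$ forces each $\delta_f(V_{P_j})=0$ for $j\in\{M+2,\ldots,q\}$—is immediate once one observes $\delta_f(V_{P_j}) \geq 0$, which in turn follows because $P_j\circ f \leq d_j\|f\| + O(1)$ implies $m_f(r,V_{P_j})$ is bounded below by a constant. The argument is otherwise routine; the only mild subtlety is tracking the exceptional set $E$, which is absorbed by the monotonicity of $\liminf$ under restriction of the domain, so this is not an obstacle worth calling out as the main difficulty.
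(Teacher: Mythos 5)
Your proposal is correct and is essentially the argument the paper intends: the authors state the corollary follows ``immediately'' from Theorem~\ref{SMT}, and your derivation simply makes that explicit, working with the proximity-function form $\delta_f(V_{P_j})=\liminf_r m_f(r,V_{P_j})/(d_jT_f(r))$ via Theorem~\ref{T4} rather than the equivalent counting-function form $1-\limsup_r N(r,1_{\mathbb{T}}\oslash P_j\circ f)/(d_jT_f(r))$ that the definition also provides. Your handling of the exceptional set (restricting the $\liminf$ to the cofinal set $\mathbb{R}^+\setminus E$) and the superadditivity of $\liminf$ are exactly the right bookkeeping, and the nonnegativity of each defect needed for the final assertion is immediate from the nonnegativity of the Weil function.
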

	
	For linearly nondegenerated tropical hyperplanes in Theorem \ref{SMT}, that is, $d=d_{j}=1$ for all $j=1, 2, \ldots, q,$ and $M=(_d^{m+d})-1=m,$ we get the following corollary.\par
	
	\begin{corollary}\label{C1} Let $q$ and $m$ be positive integers with $q\geq m.$ Let the tropical holomorphic curve $f: \mathbb{R}^n\rightarrow\mathbb{TP}^{m}$ be tropical linearly nondegenerated. Assume that tropical hyperplanes $V_{P_{j}}$ are defined by tropical linear polynomials $P_{j}$ $(j=1, \ldots, q).$
		If $\lambda=ddg (\{P_{m+2}\circ f, \ldots, P_{q}\circ f\})$ and $\limsup_{r\rightarrow\infty}\frac{\log T_{f}(r)}{r}=0,$  then \begin{eqnarray*}
			(q-m-1-\lambda)T_{f}(r)&\leq&\sum_{j=1}^{q}N\left(r, \frac{1_{\mathbb{T}}}{P_{j}\circ f}\oslash\right)+o(T_{f}(r))\\&&-N\left(r, \frac{1_{\mathbb{T}}}{C_{o}\left(P_{1}\circ f, \ldots, P_{m+1}\circ f\right)}\oslash\right)\\
			&\leq&\sum_{j=m+2}^{q}N\left(r, \frac{1_{\mathbb{T}}}{P_{j}\circ f}\oslash\right)+o(T_{f}(r))\\
			&\leq&(q-m-1)T_{f}(r)+o(T_{f}(r))
		\end{eqnarray*}
		where $r$ approaches infinity outside an exceptional set of zero upper density measure. \end{corollary}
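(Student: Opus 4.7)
The plan is to adapt the Cartan-style framework: treat $g_{j-1} := P_j \circ f$ as tropical entire functions which, by Proposition \ref{P1} and the algebraic nondegeneracy of $f$, are linearly independent in the Gondran--Minoux sense, then build a tropical Casorati determinant and combine the Jensen formula \eqref{Jensen-1} with the higher-dimensional logarithmic derivative lemma (Theorem \ref{loga}). First I would reduce to the common-degree case by replacing each $P_j$ with $P_j^{\otimes d/d_j}$; since zeros scale, $N(r, 1_{\mathbb{T}} \oslash (P_j^{\otimes d/d_j}\circ f)) = \tfrac{d}{d_j} N(r, 1_{\mathbb{T}}\oslash (P_j\circ f))$, so all $P_j^{\otimes d/d_j}\circ f$ have degree $d$ and the general statement follows from the equal-degree case.

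In the equal-degree setting I would introduce the standard auxiliary quantities
\[
L := \tfrac{g_0 \otimes g_1 \otimes \cdots \otimes g_{q-1}}{C_o(g_0,\ldots,g_M)}\oslash, \qquad \tilde{L} := \tfrac{g_0 \otimes \overline{g_1}\otimes \cdots \otimes \overline{g_M}^{[M]} \otimes g_{M+1}\otimes \cdots \otimes g_{q-1}}{C_o(g_0,\ldots,g_M)}\oslash,
\]
together with $\psi := g_{M+1} \otimes \cdots \otimes g_{q-1}$ and the ``Casorati--shift'' factor $K := C_o(1_{\mathbb{T}}, g_1\oslash g_0, \ldots, g_M\oslash g_0) \otimes \bigotimes_{k=1}^M (\overline{g_0}^{[k]} \oslash \overline{g_k}^{[k]})$, so that $\psi = \tilde{L}\otimes K$. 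The whole estimate then rests on bounding $\tfrac{1}{\omega_n}\int_{S^{n-1}(1)} \psi(r\theta)\,d\sigma(\theta)$ from two sides.

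For the lower bound I would use that among $g_{M+1},\ldots,g_{q-1}$ there are exactly $q-M-1-\lambda$ complete ones, each of which averages to at least $d\cdot \tfrac{1}{\omega_n}\int_{S^{n-1}(1)}\max_j f_j(r\theta)\,d\sigma(\theta) = d\,T_f(r) + O(1)$ by the homogeneity of degree $d$; the $\lambda$ incomplete ones contribute only a bounded term via the per-direction convexity of $(g_\nu)_\theta$. For the upper bound I would apply \eqref{Jensen-1} to $\tilde L$ to get $\tfrac{1}{\omega_n}\int \psi\,d\sigma = N(r, 1_{\mathbb{T}}\oslash \tilde L) - N(r,\tilde L) + m(r,K) + O(1)$, then invoke Theorem \ref{loga} factor-by-factor on $K$ (each factor is a ratio of translates of $g_j\oslash g_0$, whose characteristic is controlled by $d T_f(r)$, hence of subnormal growth) to obtain $m(r,K)=o(T_f(r))$ off a set of zero upper density. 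Combining the two sides yields $(q-M-1-\lambda)T_f(r) \leq \tfrac{1}{d}[N(r,1_{\mathbb{T}}\oslash \tilde L) - N(r,\tilde L)] + o(T_f(r))$. To pass from $\tilde L$ to $L$ I would use Lemma \ref{newloga} to absorb $N(r+jc,1_{\mathbb{T}}\oslash g_j)-N(r,1_{\mathbb{T}}\oslash g_j) = o(T_f(r))$, and finally apply Jensen to $L$ to expand $N(r,1_{\mathbb{T}}\oslash L)-N(r,L)$ as $\sum_{j=0}^{q-1}N(r,1_{\mathbb{T}}\oslash g_j) - N(r,1_{\mathbb{T}}\oslash C_o(g_0,\ldots,g_M))$, which produces the first displayed inequality. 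The second inequality comes from a lower bound on $N(r,1_{\mathbb{T}}\oslash C_o(g_0,\ldots,g_M))$ obtained by factoring $C_o$ as in \eqref{E6.1} and applying Jensen plus Lemma \ref{newloga} to the shifted ratios; the third inequality is immediate from Theorem \ref{T4}.

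The main obstacle will be rigorously pushing $m(r,K)=o(T_f(r))$ through the several-variable setting: each factor of $K$ must be recognized as a shift-ratio of the tropical meromorphic function $g_j\oslash g_0$, whose order-zero (subnormal) growth must be verified by bounding $T_{g_j\oslash g_0}(r) \leq dT_f(r)+O(1)$ from the homogeneity of $P_j$ and the definition of $T_f$, and then Theorem \ref{loga} must be applied uniformly in the permutation and in the shift $c\in\mathbb{R}^n$. A closely related delicate point is treating the counting-function translates $N(r+jc,\cdot)$ uniformly in $j=1,\ldots,M$; Lemma \ref{newloga} handles this but only outside an exceptional set, so some care is required to ensure the union of these exceptional sets over the finitely many factors still has zero upper density. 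Every other step (Jensen expansion, lower bound via completeness, separating complete vs.\ incomplete slices) is a direct adaptation of the one-variable Korhonen--Tohge--Cao--Zheng argument to the spherical average $\tfrac{1}{\omega_n}\int_{S^{n-1}(1)}\cdot\,d\sigma(\theta)$ provided by Lemma \ref{L-10}.
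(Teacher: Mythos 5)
Your proposal is correct, but you have effectively re-derived the paper's general Theorem \ref{SMT} rather than proving the corollary the way the paper does: in the paper, Corollary \ref{C1} is obtained in one line by specializing Theorem \ref{SMT} to the hyperplane case $d=d_j=1$, whence $M=\binom{m+d}{d}-1=m$ and the factors $\tfrac{1}{d_j}$ disappear. Your outline --- the auxiliary functions $L$, $\tilde L$, $\psi$, $K$ with $\psi=\tilde L\otimes K$, the lower bound via the $q-M-1-\lambda$ complete elements and per-direction convexity, the upper bound via the Jensen formula \eqref{Jensen-1} and $m(r,K)=o(T_f(r))$ from Theorem \ref{loga}, the passage from $\tilde L$ to $L$ using Lemma \ref{newloga}, and the lower bound on $N(r,1_{\mathbb{T}}\oslash C_o(g_0,\ldots,g_M))$ --- reproduces the paper's proof of Theorem \ref{SMT} step for step, so nothing is wrong; it is simply far more work than needed once that theorem is available. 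Two small points: the corollary assumes $f$ is \emph{linearly} nondegenerate, not algebraically nondegenerate as you wrote, and for $d=1$ these notions coincide since $f^{I_i}=f_i$ and Proposition \ref{P1} covers both cases; and your degree-reduction step (replacing $P_j$ by $P_j^{\otimes d/d_j}$) is vacuous here because all $d_j=1$. The concerns you raise about uniformity of the exceptional sets are handled exactly as you suggest: there are only finitely many factors, so the finite union of zero-upper-density sets still has zero upper density.
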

	
	We give the following second main theorem  for a sufficient condition of $\lambda=0$ into one dimensional tropical projective space.\par
	
	\begin{corollary}\label{C_1} Assume that $f: \mathbb{R}^{n}\rightarrow\mathbb{TP}$ is a nonconstant tropical meromorphic function with $\limsup_{r\rightarrow\infty}\frac{\log T_{f}(r) }{r}=0,$ and $a_{j}=[a_{j1}: a_{j0}]$ $(j=1, \ldots, q)$ are distinct values of $\mathbb{TP}$ which defining tropical polynomials $P_{j}(x)=a_{j0}\otimes  x_{0}\oplus a_{j1}\otimes  x_{1}$ on $\mathbb{R}^{2},$ respectively.  If $f\not\equiv(f\oplus a_{j})\not\equiv a_{j}$ for all $j=1,2\ldots, q,$ then \begin{eqnarray}\label{E7.2}&&(q-2)T_{f}(r)\\\nonumber&=&
			\sum_{j=1}^{q}N\left(r, \frac{1_{\mathbb{T}}}{P_{j}\circ f}\oslash\right)-N\left(r, \frac{1_{\mathbb{T}}}{C_{o}(P_{1}\circ f, P_{2}\circ f)}\oslash \right)+o\left(T_{f}(r)\right)\\\nonumber
			&=&\sum_{j=3}^{q}N\left(r, \frac{1_{\mathbb{T}}}{P_{j}\circ f}\oslash\right)+o\left(T_{f}(r)\right).\end{eqnarray}  holds as $r$ approaches infinity outside an exceptional set of zero upper density measure.
	\end{corollary}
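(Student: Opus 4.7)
The plan is to derive Corollary \ref{C_1} as a direct specialization of Corollary \ref{C1} to the case $m=1$, $d_1=\cdots=d_q=1$. In this situation $d=\mathrm{lcd}(d_1,\ldots,d_q)=1$ and $M=\binom{m+d}{d}-1=\binom{2}{1}-1=1$, so the summation indices $M+1=2$ and $M+2=3$ appearing in Corollary \ref{C1} exactly match the $2$ and $3$ appearing in the claim.

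Before invoking the corollary, I need to verify its two hypotheses. Because a tropical hyperplane in $\mathbb{TP}^{1}$ reduces to a single point, tropical linear nondegeneracy of $f$ is equivalent to $f$ being nonconstant, which is part of the hypothesis. For the degeneracy index $\lambda=\mathrm{ddg}(\{P_3\circ f,\ldots,P_q\circ f\})$ to vanish, each composition $P_j\circ f = a_{j0}\otimes f_0\oplus a_{j1}\otimes f_1$ must be \emph{complete} in the sense of Subsection \ref{SS2.3}, meaning neither of the two tropical monomials dominates the other on all of $\mathbb{R}^n$. Translating through the identification $[f_0:f_1]\leftrightarrow f=f_0\oslash f_1$ established in the lemma preceding Theorem \ref{T4}, the first monomial dominates everywhere precisely when $f\equiv f\oplus a_j$, and the second dominates everywhere precisely when $f\oplus a_j\equiv a_j$. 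The hypothesis $f\not\equiv (f\oplus a_j)\not\equiv a_j$ thus excludes both cases and forces $\lambda=0$.

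Applying Corollary \ref{C1} with $m=1$ and $\lambda=0$ now yields directly
\begin{eqnarray*}
(q-2)T_f(r)
&\leq& \sum_{j=1}^{q} N\!\left(r,\tfrac{1_{\mathbb{T}}}{P_j\circ f}\oslash\right) - N\!\left(r,\tfrac{1_{\mathbb{T}}}{C_o(P_1\circ f,P_2\circ f)}\oslash\right) + o(T_f(r)) \\
&\leq& \sum_{j=3}^{q} N\!\left(r,\tfrac{1_{\mathbb{T}}}{P_j\circ f}\oslash\right) + o(T_f(r)) \\
&\leq& (q-2)T_f(r) + o(T_f(r)),
\end{eqnarray*}
outside an exceptional set of zero upper density, where the last bound comes from summing the first main theorem estimate $N(r,1_{\mathbb{T}}\oslash P_j\circ f)\leq T_f(r)+O(1)$ (Theorem \ref{T4}, since the Weil function is nonnegative) over $j=3,\ldots,q$. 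The three intermediate expressions are therefore sandwiched between $(q-2)T_f(r)$ and $(q-2)T_f(r)+o(T_f(r))$, so each of them must coincide with $(q-2)T_f(r)+o(T_f(r))$, yielding the asserted double equality \eqref{E7.2}.

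The only genuinely substantive step is the equivalence between the pointwise condition $f\not\equiv (f\oplus a_j)\not\equiv a_j$ and algebraic completeness of the linear combination $P_j\circ f$; once that is traced through the $\mathbb{TP}\leftrightarrow\mathbb{T}$ identification and the definition of completeness, everything else is bookkeeping against Corollary \ref{C1} and Theorem \ref{T4}. There is no further analytic obstacle, since the subnormal growth assumption has already been absorbed into the error term $o(T_f(r))$ produced by the second main theorem.
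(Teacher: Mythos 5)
Your proposal is correct and follows essentially the same route as the paper: verify that $f$ nonconstant forces tropical linear nondegeneracy, show that the hypothesis $f\not\equiv(f\oplus a_j)\not\equiv a_j$ rules out either monomial of $P_j\circ f$ dominating globally so that $\lambda=0$, and then specialize Corollary \ref{C1} with $m=1$; your explicit sandwich step using Theorem \ref{T4} to upgrade the chain of inequalities to the asserted equalities is left implicit in the paper but is exactly what is intended. (The only quibble is that you have swapped which of the two excluded conditions corresponds to which dominating monomial, but since both are excluded this does not affect the argument.)
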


	\begin{proof}Due to $f$ tropical meromorphic on $\mathbb{R}^{n},$ we may assume $f=f_{1}\oslash f_{0}=[f_{0}: f_{1}]$ where $f_{0}$ and $f_{1}$ are two tropical entire functions on  $\mathbb{R}^{n}$ without common tropical roots.  We claim that $f$ is tropical linearly nondegenerated. Otherwise, by Proposition \ref{P1} we know that $f_0$ and $f_1$ are linearly dependently in the Gondran-Minoux sense, this implies that there exist two nonempty sets $I, J$ with $I\cap J=\emptyset$ and $I\cup J=\{0, 1\}$  such that
		\begin{eqnarray*}
			\bigoplus_{i\in I} b_{i}\otimes  f_{i}=\bigoplus_{j\in J} b_{j}\otimes  f_{j},
		\end{eqnarray*} that is, there exist two values $b_{0}, b_{1}\in\mathbb{T}$ such that $b_{0}\otimes  f_{0}=b_{1}\otimes  f_{1}.$ This means $f=f_{1}\oslash f_{0}=b_{0}\oslash b_{1},$ which contradicts to the assumption that $f$ is nonconstant. Hence, $f$ is tropical linearly nondegenerated.\par
		
		Note that $f\not\equiv(f\oplus a_{j})\not\equiv a_{j}$ for all $j=1,2\ldots, q.$ We claim that $\lambda:=ddg (\{P_{1}\circ f, \ldots, P_{q}\circ f\})=0.$  Otherwise, $\lambda>0.$ Then there exists at leat one of $\{P_{1}\circ f, \ldots, P_{q}\circ f\},$ say $P_{k}\circ f,$ satisfying $\ell(P_{k}\circ f)<2$ by the definition of the degree of degeneracy. This implies that either $P_{k}\circ f\equiv a_{k0}\otimes  f_{0}$ or
		$P_{k}\circ f\equiv a_{k1}\otimes  f_{1}.$ Thus we have either
		$(a_{k0}+ f_{0})\oplus (a_{k1}+ f_{1})\equiv a_{k0}+ f_{0}$ or
		$(a_{k0}+ f_{0})\oplus (a_{k1}+ f_{1})\equiv a_{k1}+ f_{1},$
		which contradict either $f\oplus a_{k}\not\equiv a_{k}$ or $f\oplus a_{k}\not\equiv f$ respectively. Hence, $\lambda=0.$\par
		
		Now by Corollary \ref{C1} the conclusion of the corollary is obtained.
	\end{proof}
	
	At the end of this subsection, we consider another shift operator. Choose $q\in\mathbb{R}\setminus\{0, 1\}.$   For a tropical entire function $h$ on $\mathbb{R}^{n},$ denote by
	$$\overline{h}^{[0]}:=h(x),\quad \overline{h}^{[1]}:=h(qx), \cdots,
	\overline{h}^{[k]}:=h(q^{k}x)$$ for all $k\in\mathbb{N}.$ The tropical $q$-Casorati determinant of a tropical holomorphic map $f:\mathbb{R}^n\rightarrow\mathbb{TP}^{m}$ with a reduced representation $(f_{0}, f_{1}, \ldots, f_{m})$ is defined by
	\begin{eqnarray*}C_{q}(f):=C_{q}(f_{0}, f_{1}, \ldots, f_{m})=\bigoplus \overline{f_{0}}^{[\pi(0)]}\otimes
		\overline{f_{1}}^{[\pi(1)]}\otimes\cdots\otimes\overline{f_{m}}^{[\pi(m)]}
	\end{eqnarray*} where the sum is taken over all permutations $\{\pi(0), \ldots, \pi(m)\}$ of $\{0, \ldots, m\}.$ Furthermore, the tropical $q$-Casorati determinant $\tilde{C}_{q}(f)=C_{q}(f^{I_{0}}, \ldots, f^{I_{M}})$ is given as
	\begin{eqnarray*} \hat{C}_{q}(f)=C_{q}(f^{I_{0}}, \ldots, f^{I_{M}})=\bigoplus \overline{f^{I_{0}}}^{[\pi(0)]}\otimes \overline{f^{I_{1}}}^{[\pi(1)]}\otimes\cdots\otimes\overline{f^{I_{M}}}^{[\pi(M)]},
	\end{eqnarray*} where the sum is taken over all permutations $\{\pi(0), \ldots, \pi(M)\}$ of $\{0, 1, \ldots, M\}.$ Then by a similar discussion as in the proof of Theorem \ref{SMT} and using Theorem \ref{qloga}
	instead of Theorem \ref{loga}, we also obtain the following result. The details are omitted.\par
	
	\begin{theorem} \label{qsmt} Let $p$ and $m$ be positive integers with $q\geq m.$ Let the tropical holomorphic curve $f: \mathbb{R}^n\rightarrow\mathbb{TP}^{m}$ be tropical algebraically nondegenerated. Assume that tropical hypersurfaces $V_{P_{j}}$ are defined by homogeneous tropical polynomials $P_{j}$ $(j=1, \ldots, p)$ with degree $d_{j},$ respectively, and $d=lcd(d_{1}, \ldots, d_{p})$ (the least common number). Let $M=(_d^{m+d})-1.$
		If $\lambda=ddg (\{P_{M+2}\circ f, \ldots, P_{p}\circ f\})$ and $\limsup_{r\rightarrow\infty}\frac{\log T_{f}(r)}{\log r}=0,$  then \begin{eqnarray*}
			(p-M-1-\lambda)T_{f}(r)&\leq&\sum_{j=1}^{p}\frac{1}{d_{j}}N\left(r, \frac{1_{\mathbb{T}}}{P_{j}\circ f}\oslash\right)+o(T_{f}(r))\\&&-\frac{1}{d}N\left(r, \frac{1_{\mathbb{T}}}{C_{q}\left(P_{1}^{\otimes \frac{d}{d_{1}}}\circ f, \ldots, P_{M+1}^{\otimes \frac{d}{d_{M+1}}}\circ f\right)}\oslash\right)\\
			&\leq&\sum_{j=M+2}^{p}\frac{1}{d_{j}}N\left(r, \frac{1_{\mathbb{T}}}{P_{j}\circ f}\oslash\right)+o(T_{f}(r))\\
			&\leq&(p-M-1)T_{f}(r)+o(T_{f}(r))
		\end{eqnarray*}
		on a set of logarithmic density $1.$ \end{theorem}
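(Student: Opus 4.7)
The plan is to run the entire proof of Theorem \ref{SMT} in parallel, with four substitutions: replace the additive shift $x\mapsto x+c$ by the dilation $x\mapsto qx$; replace the additive Casorati determinant $C_o$ by its multiplicative analogue $C_q$; replace Theorem \ref{loga} by the $q$-difference logarithmic derivative lemma Theorem \ref{qloga}; and replace ``subnormal growth + zero upper density exceptional set'' by ``zero order + logarithmic density one'' in every step where Lemma \ref{newloga} was invoked. As in Theorem \ref{SMT}, I would first reduce to the uniform-degree case $d_j=d$ by passing from $P_j$ to $P_j^{\otimes d/d_j}$, using the identity $N(r,1_{\mathbb{T}}\oslash P_j^{\otimes d/d_j}\circ f)=\tfrac{d}{d_j}N(r,1_{\mathbb{T}}\oslash P_j\circ f)$; the general case then follows from the uniform-degree conclusion together with the first main theorem.

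In the uniform-degree case, set $g_{j-1}:=P_j\circ f$ and form, using the new $q$-shift notation $\overline{h}^{[k]}(x)=h(q^k x)$, the auxiliary functions
\[
L=\frac{g_0\otimes g_1\otimes\cdots\otimes g_{q-1}}{C_q(g_0,\ldots,g_M)}\oslash,\qquad
\tilde L=\frac{g_0\otimes\overline{g_1}\otimes\cdots\otimes\overline{g_M}^{[M]}\otimes g_{M+1}\otimes\cdots\otimes g_{q-1}}{C_q(g_0,\ldots,g_M)}\oslash,
\]
and $\psi=g_{M+1}\otimes\cdots\otimes g_{q-1}=\tilde L\otimes K$, where
\[
K=C_q(1_{\mathbb{T}},g_1\oslash g_0,\ldots,g_M\oslash g_0)\otimes(\overline{g_0}\oslash\overline{g_1})\otimes\cdots\otimes(\overline{g_0}^{[M]}\oslash\overline{g_M}^{[M]}).
\]
The complete/incomplete decomposition of the $g_\nu$ for $\nu\in\{M+1,\ldots,q-1\}$ is algebraic and does not depend on the choice of shift, so the lower bound $\tfrac{1}{\omega_n}\int_{S^{n-1}(1)}\psi(r\theta)d\sigma(\theta)\geq (q-M-1-\lambda)dT_f(r)+O(1)$ established in \eqref{E-d} carries over verbatim, and the tropical Jensen formula then gives
\[
(q-M-1-\lambda)T_f(r)\leq \tfrac{1}{d}[N(r,1_{\mathbb{T}}\oslash\tilde L)-N(r,\tilde L)]+\tfrac{1}{d}m(r,K)+O(1).
\]

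The heart of the proof is showing $m(r,K)=o(T_f(r))$ on a set of logarithmic density one. By expanding the tropical Casorati determinant exactly as in the proof of Theorem \ref{SMT}, $K$ becomes a tropical sum over permutations of products of ratios of the form $\overline{(g_j\oslash g_0)}^{[\pi(j)]}\oslash\overline{(g_j\oslash g_0)}^{[j]}$, i.e.\ ratios $h(q^{\pi(j)}x)\oslash h(q^j x)$ with $h=g_j\oslash g_0$. Since $T_{g_j\oslash g_0}(r)\leq dT_f(r)+O(1)$, each $g_j\oslash g_0$ is of zero order, so Theorem \ref{qloga} (applied after an auxiliary change of variable $x\mapsto q^j x$ to absorb the outer shift, which preserves zero order by Lemma \ref{order}) yields $m(r,h(q^{\pi(j)}x)\oslash h(q^j x))=o(T_f(r))$ on a set of logarithmic density one. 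Summing the finitely many permutations gives $m(r,K)=o(T_f(r))$ on such a set.

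Finally, to pass from $N(r,1_{\mathbb{T}}\oslash\tilde L)-N(r,\tilde L)$ to $N(r,1_{\mathbb{T}}\oslash L)-N(r,L)$, I need the counterpart of \eqref{E1}, namely $N(r,1_{\mathbb{T}}\oslash\overline{g_j}^{[j]})-N(r,1_{\mathbb{T}}\oslash g_j)=o(T_f(r))$. Under dilation, the pole/root sets transform via $B_r\leftrightarrow B_{|q|^j r}$, so this difference is controlled by $N(|q|^j r,\cdot)-N(r,\cdot)$, and the zero-order hypothesis on $g_j$ together with Lemma \ref{order} (taking $C_1=|q|^j$ and $C_2$ arbitrary $>1$) guarantees that $N(|q|^j r,1_{\mathbb{T}}\oslash g_j)\leq C_2 N(r,1_{\mathbb{T}}\oslash g_j)$ outside a set of logarithmic density zero, which after refining via a sequence $C_2\to 1$ using Lemma \ref{qdiflemma} gives the required $o(T_f(r))$ bound on a set of logarithmic density one. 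Combining these bounds in the same algebraic sequence as in \eqref{E-h}--\eqref{E6.4}, and estimating $N(r,1_{\mathbb{T}}\oslash C_q(g_0,\ldots,g_M))$ via the same expansion \eqref{E6.1}--\eqref{E6.3} (with the $q$-shift counting transitions handled by the same Lemma \ref{order}/\ref{qdiflemma} argument), yields the asserted inequality on a set of logarithmic density one. The main obstacle is Step 4, the counting-function transformation: unlike additive shifts where $N(r+jc,\cdot)-N(r,\cdot)$ is easily $o(T_f(r))$ by Lemma \ref{newloga}, dilation forces the argument of $N$ to be multiplicatively perturbed, and the correct bookkeeping requires invoking Lemma \ref{order} and Lemma \ref{qdiflemma} jointly to extract $o(T_f(r))$ on a logarithmic-density-one set.
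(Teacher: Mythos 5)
Your proposal is correct and follows essentially the same route as the paper, which itself only states that Theorem \ref{qsmt} follows ``by a similar discussion as in the proof of Theorem \ref{SMT} and using Theorem \ref{qloga} instead of Theorem \ref{loga}'' and omits all details. You in fact supply more than the paper does, correctly isolating the only two points where the additive argument genuinely needs adaptation — the estimate $m(r,K)=o(T_f(r))$ via the $q$-difference lemma, and the dilation bookkeeping $N(|q|^j r,\cdot)-N(r,\cdot)=o(T_f(r))$ via Lemma \ref{order} combined with Lemma \ref{qdiflemma} — so the proposal is a faithful and complete elaboration of the intended proof.
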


	\section{Second main theorem without growth condition}\label{Sec-6}
	
	Finally, we consider the completeness condition for coefficients of one tropical homogeneous polynomial $P_{j}$ in $\mathbb{TP}^m$ and obtain the following interesting second main theorem without growth condition.\par
	
	\begin{theorem}\label{C1.4}
		Let a tropical holomorphic curve $f=[f_0, f_1, \ldots, f_n]: \mathbb{R}^{n}\to\mathbb{TP}^m$ be tropical algebraically nondegenerated  (i.e., its image not in any tropical hypersurface). If a tropical homogeneous polynomial
		\begin{eqnarray*} P(x)=\bigoplus_{i_{0}+i_{1}+\ldots+i_{m}=d} a_{i_{0}, i_{1}, \ldots, i_{m}}\otimes  x_{0}^{\otimes  i_0}\otimes  x_{1}^{\otimes  i_1}\cdots \otimes  x_{m}^{\otimes  i_m},
		\end{eqnarray*} is complete (i.e. , all $a_{i_{0}, i_{1}, \ldots, i_{m}}\in\mathbb{R}$), then
		\[
		T_f(r) =\frac{1}{d} N\left( r, \frac{1_{\mathbb{T}}}{P \circ f} \oslash \right) + O(1)
		\]
		and thus $\delta_{f}(V_{P})=0.$
	\end{theorem}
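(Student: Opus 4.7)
The plan is to invoke the First Main Theorem for hypersurfaces (Theorem \ref{T4}), which gives
\[
m_f(r, V_P) + N(r, 1_{\mathbb{T}} \oslash (P\circ f)) = dT_f(r) + O(1),
\]
and reduce the problem to showing that the proximity term $m_f(r, V_P)$ is bounded. Since $m_f(r, V_P)$ is the spherical average of the Weil function $\lambda_{V_P}(f(x)) = d\|f(x)\| + \|a\| - P(f)(x)$, a uniform pointwise bound $\lambda_{V_P}(f(x)) = O(1)$ in $x$ immediately yields $m_f(r, V_P) = O(1)$, and the stated identity follows by substitution. The defect statement $\delta_f(V_P) = 0$ is then automatic from its definition together with the identity just obtained.

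The required bound is a two-sided estimate on $P(f)(x) - d\|f(x)\|$, and the completeness hypothesis enters precisely in the lower bound. For the upper bound, every term $a_I + i_0 f_0(x) + \cdots + i_m f_m(x)$ with $|I| = d$ is at most $a_I + d\|f(x)\|$ because each $i_j \geq 0$ and $f_j(x) \leq \|f(x)\|$; this gives $P(f)(x) \leq d\|f(x)\| + \max_I a_I$. For the lower bound, completeness means every coefficient $a_I \in \mathbb{R}$; in particular, each pure-power coefficient $a^{(j)} := a_{(0,\ldots,d,\ldots,0)}$ (with $d$ in slot $j$) is finite, so for the index $k$ attaining $\|f(x)\| = f_k(x)$,
\[
P(f)(x) \geq a^{(k)} + d f_k(x) \geq d\|f(x)\| + \min_{0\leq j\leq m} a^{(j)}.
\]
Since $\max_I a_I$ and $\min_j a^{(j)}$ are finite real numbers, $P(f)(x) - d\|f(x)\| = O(1)$ uniformly in $x$, as desired.

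The main conceptual point to highlight is that completeness is exactly what is needed for the lower bound: if some pure-power coefficient $a^{(j)}$ were $-\infty$, then on directions where $f_j$ dominates $\|f\|$ the polynomial $P(f)$ could drop far below $d\|f\|$ and $\lambda_{V_P}$ would fail to be bounded. No growth condition on $f$ is required anywhere, which is precisely why the theorem is ``growth-free'' and why the error is $O(1)$ rather than $o(T_f(r))$. There is no serious analytic obstacle; the argument is algebraic in nature and relies only on the First Main Theorem from earlier in the paper.
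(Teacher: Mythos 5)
Your proposal is correct and rests on the same key estimate as the paper's proof, namely the two-sided pointwise bound $P(f)(x) = d\|f(x)\| + O(1)$ derived from completeness; routing the conclusion through Theorem \ref{T4} and the boundedness of the Weil function is only a cosmetic repackaging of the paper's direct application of the Jensen formula \eqref{Jensen-1} to the entire function $P\circ f$, since Theorem \ref{T4} is itself Jensen plus the definition of $m_f(r,V_P)$. Your justification of the lower bound via the pure-power terms $a^{(k)} + d f_k(x)$ at the maximizing index $k$ is in fact a more careful spelling-out of the one-line inequality \eqref{new-2} in the paper.
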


	\begin{proof}Since $P(f)$ is an entire function,  by Jensen formula \eqref{Jensen-1} we have
		\begin{align}\label{new-1}
			N\left( r, \frac{1_{\mathbb{T}}}{P \circ f} \oslash \right)=\frac{1}{\omega_{n}}\int_{S^{n-1}(1)} P(f)(r\theta)d\sigma(\theta) - P(f)(0).
		\end{align}
		Furthermore,
		\begin{align}\label{new-2}
			\min \{a_{i_{0}, i_{1}, \ldots, i_{m}}\}+ d\|f(x)\| \leq P(f(x)) \leq \max \{a_{i_{0}, i_{1}, \ldots, i_{m}}\}+ d\|f(x)\|,
		\end{align} where $\|f\|=\max\{f_{0}, \ldots, f_{m}\}.$
		This gives that $$P(f)(x) = d\|f(x)\| + O(1).$$ Then combining \eqref{new-1} and \eqref{new-2} we obtain
		$$T_{f}(r)= \frac{1}{d}N\left( r, \frac{1_{\mathbb{T}}}{P \circ f} \oslash \right) + O(1). $$
	\end{proof}
	
	Whenever $m=1$ (i. e.,  $\mathbb{T} \mathbb{P}^1$), for a nonconstant tropical meromorphic function $f$ on $\mathbb{R}^{n}$ and a complete polynomial $\mathbb{TP}^1$
	\begin{equation*}
		P(x) = \bigl(a_0 \otimes  x_0\bigr) \oplus \bigl(a_1 \otimes  x_1),\,\, (a, a_1\in  \mathbb{R}),
	\end{equation*}  it follows from Theorem \ref{C1.4}	that
	\begin{eqnarray}\label{E52}		T(r, f) = T_f(r) + O(1) = N\left( r, \frac{1_{\mathbb{T}}}{P \circ f} \oslash \right) + O(1).\end{eqnarray} This is just the result of Halonen, Korhonen and Filipuk \cite[Corollary3.7]{8}.
	
	We can find that the maximum value of $P\circ f=(a_0\otimes f_0)\oplus(a_1\otimes f_1)$ is attained at least twice in $\mathbb{TP}^{1}$ is equivalent to that $f\oplus a^*$ is attained at least twice in $\mathbb{R}\cup\{\pm\infty\}$, where $a^*=[a_1:a_0]$ is the dual of $a=[a_0:a_1]$ in tropical setting. Then by the tropical Jensen formula \eqref{Jensen-1},
	\begin{align*}
		&N\left(r,\dfrac{1_{\mathbb{T}}}{P\circ f}\oslash\right)\\
		&=	\frac{1}{\omega_{n}}\int_{S^{n-1}(1)} P(f)(r\theta)d\sigma(\theta) +O(1)\\
		&=   \frac{1}{\omega_{n}}\int_{S^{n-1}(1)} (a_{0}\otimes f_{0}(r\theta))\oplus(a_{1}\otimes f_{1}(r\theta)) \sigma(\theta) +O(1)\\
		&=	\frac{1}{\omega_{n}}\int_{S^{n-1}(1)}\left(\left(a_{1}\oslash a_{0}\right)\oplus\left(f_{1}(r\theta)\right)\oslash f_{0}(r\theta)\right)d\sigma(\theta)\\&+	\frac{1}{\omega_{n}}\int_{S^{n-1}(1)}f_0(r\theta)d\sigma(\theta)+O(1)\\
		&=	\frac{1}{\omega_{n}}\int_{S^{n-1}(1)}\left(a^*\oplus f(r\theta)\right)d\sigma(\theta) +	\frac{1}{\omega_{n}}\int_{S^{n-1}(1)}f_0(r\theta)d\sigma(\theta)+O(1)\\
		&=N\left(r,\dfrac{1_{\mathbb{T}}}{f\oplus a^*}\oslash\right)-N(r, f\oplus a^*)+N\left(r,\dfrac{1_{\mathbb{T}}}{f_{0}}\oslash\right)-N(r, f_{0})+O(1)\\
		&=N\left(r,\dfrac{1_{\mathbb{T}}}{f\oplus a^*}\oslash\right)-N(r,f\oplus a^*)+N(r, f)+O(1).	
	\end{align*}
	
	Hence, \eqref{E52} is identically equal to \begin{eqnarray}\label{E55} T(r, f)= N\left(r,\dfrac{1_{\mathbb{T}}}{f\oplus a^*}\oslash\right)-N(r,f\oplus a^*)+N(r, f)+O(1)\end{eqnarray} for any $a\in\mathbb{R}.$\par

	Clearly, if $a<L_f=\inf\{f(b): b \,\mbox{is pole of}\, f\},$ then it implies that all poles $f\oplus a$ are just poles of $f$ and thus $N(r,f\oplus a)=N(r, f).$ Hence it gives from \eqref{E55} that the following corollary improves a result of Laine-Tohge \cite[Theorem 3.44]{laine-tohge}.\par
	
	\begin{corollary}\label{CCC} If $f$ is a nonconstant tropical meromorphic function on $\mathbb{R}^{n}.$ If $q (\geq1)$ distinct values $a_{1},\ldots,a_{q}\in\mathbb{R}$ satisfying
		\begin{equation*}
			\max\{a_{1},\ldots,a_{q}\}<L_f=\inf\{f(b): b\,\mbox{is a pole of}\, f\},
		\end{equation*}
		Then
		$$ qT(r,f)=\sum_{j = 1}^{q}N(r,1_{\mathbb{T}}\oslash(f\oplus a_{j}))+O(1).$$ This yields that $$\delta_{f}(a):=1-\limsup_{r\rightarrow\infty} \frac{N(r,1_{\mathbb{T}}\oslash(f\oplus a))}{T(r, f)}=0$$ holds for each $a\in \mathbb{R}$ satisfying $a<L_f:=\inf\{f(b): b\,\mbox{is a pole of}\, f\}.$
	\end{corollary}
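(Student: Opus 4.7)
The plan is to reduce the statement directly to the identity \eqref{E55} derived above the corollary, using the hypothesis $\max\{a_1,\ldots,a_q\} < L_f$ to make the cross-terms $N(r,f\oplus a_j)$ and $N(r,f)$ cancel. Recall \eqref{E55} reads
$$T(r,f) = N\left(r,\tfrac{1_{\mathbb{T}}}{f\oplus a}\oslash\right) - N(r,f\oplus a) + N(r,f) + O(1)$$
for every $a\in\mathbb{R}$, a consequence of Theorem~\ref{C1.4} applied to the complete polynomial $P(x)=(a_0\otimes x_0)\oplus(a_1\otimes x_1)$ combined with the tropical Jensen formula \eqref{Jensen-1}. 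So the entire task reduces to the pole-count identity $N(r,f\oplus a)=N(r,f)$ for each $a<L_f$.

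To verify $N(r,f\oplus a)=N(r,f)$ when $a<L_f$, I would argue by examining $J_{f\oplus a}$ direction by direction using Definition~\ref{jf}. At a tropical pole $b$ of $f$, the hypothesis $a<L_f\leq f(b)$ together with continuity of $f$ forces $f>a$ on a neighborhood of $b$; thus $f\oplus a=f$ locally, so $J_{f\oplus a}(b;\varphi)=J_f(b;\varphi)$ for every $\varphi\in S^{n-1}(1)$ and the pole multiplicity is preserved. Conversely, at any non-smooth point $b$ of $f\oplus a$ that is not already a pole of $f$, one must have $f(b)\leq a$; then $\partial_\varphi^+(f\oplus a)(b)=\max\{\partial_\varphi^+ f(b),0\}$ when $f(b)=a$ (and is $0$ when $f(b)<a$), so $J_{f\oplus a}(b;\varphi)\geq 0$ for every $\varphi$, meaning $b$ contributes no pole. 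Integrating over the directional sphere as in Definition~\ref{ctf} preserves this pointwise matching, giving the claimed equality of the counting functions.

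Substituting $N(r,f\oplus a_j)=N(r,f)$ into \eqref{E55} yields $T(r,f)=N(r,1_{\mathbb{T}}\oslash(f\oplus a_j))+O(1)$ for each $j=1,\ldots,q$; summing over $j$ produces $qT(r,f)=\sum_{j=1}^{q}N(r,1_{\mathbb{T}}\oslash(f\oplus a_j))+O(1)$, the main assertion. The defect consequence is immediate: for any single $a<L_f$, the relation $T(r,f)=N(r,1_{\mathbb{T}}\oslash(f\oplus a))+O(1)$ forces $\limsup_{r\to\infty} N(r,1_{\mathbb{T}}\oslash(f\oplus a))/T(r,f)=1$, hence $\delta_f(a)=0$. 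There is no serious obstacle here; the only delicate point is the directional check showing that the sublevel-set crossings $\{f=a\}$ create only upward corners (roots), not poles, of $f\oplus a$ — this is the tropical analogue of the classical fact that taking the pointwise maximum with a sub-$L_f$ constant does not create any new poles.
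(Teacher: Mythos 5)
Your proof is correct and takes essentially the same route as the paper: both reduce the corollary to the identity \eqref{E55} combined with the observation that $N(r,f\oplus a_j)=N(r,f)$ whenever $a_j<L_f$, and then sum over $j=1,\ldots,q$. The paper merely asserts this pole-count identity as clear, while you supply the directional verification via Definition~\ref{jf}; that added detail is consistent with the definitions and does not alter the argument.
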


\end{document}